\newtheorem{theo}{Theorem}[section]
\newtheorem{coro}[theo]{Corollary}
\newtheorem{prop}[theo]{Proposition}
\newtheorem{lemma}[theo]{Lemma}
\theoremstyle{remark}
\newtheorem{remark}{\bf Remark}
\def\h{{\mathbb H}}
\begin{document}

\title{Generalized quasi-dihedral group as automorphism group of Riemann surfaces}

\author{Rub\'en A. Hidalgo, Yerika Mar\'in Montilla and Sa\'ul Quispe}

\subjclass[2010]{30F10, 14H37, 14H57}  
\keywords{Riemann surfaces, Klein surfaces, Automorphisms, NEC groups, Dessins d'enfants}

\address{Departamento de Matem\'atica y Estad\'{\i}stica, Universidad de La Frontera. Temuco, Chile}
\email{ruben.hidalgo@ufrontera.cl}
\email{yerika.marin@ufrontera.cl}
\email{saul.quispe@ufrontera.cl}
\thanks{Partially supported by Projects FONDECYT Regular N. 1220261 and 1190001. The second author has been supported by ANID/Beca de Doctorado Nacional/ 21190335.}

\begin{abstract}
In this paper, we discuss certain types of conformal/anticonformal actions of the generalized quasi-dihedral group $G_{n}$ of order $8n$, for $n\geq 2$, on closed Riemann surfaces, pseudo-real Riemann surfaces and compact Klein surfaces, and in each of these actions we study  the uniqueness (up to homeomorphisms) action problem.
\end{abstract}

\maketitle

\section{Introduction}
Let $S$ be a Riemann surface and ${\rm Aut}^{+}(S)$ (respectively, ${\rm Aut}(S)$) be its group of conformal (respectively, conformal and anticonformal) automorphisms. In the generic situation, ${\rm Aut}(S)={\rm Aut}^{+}(S)$; otherwise, ${\rm Aut}^{+}(S)$ is a subgroup of index two. In \cite{Schwarz}, Schwarz proved that, if $S$ is a closed Riemann surface of genus $g \geq 2$, then ${\rm Aut}^{+}(S)$ is finite and later Hurwitz \cite{Hurwitz} obtained the upper bound  $|{\rm Aut}^{+}(S)| \leq 84(g-1)$.  
Riemann surfaces with non-trivial group of automorphisms define the branch locus ${\mathcal B}_{g}$ of the 
moduli space ${\mathfrak M}_{g}$, of biholomorphism classes of closed Riemann surfaces of genus $g \geq 2$, known to be a complex orbifold of dimension $3(g-1)$ \cite{Nag}. If $g \geq 4$, then ${\mathcal B}_{g}$ coincides with the locus where ${\mathfrak M}_{g}$ fails to be a topological manifold. The space 
${\mathfrak M}_{g}$ has a natural real structure (this coming from complex conjugation). The fixed points of such a real structure, its real points, are the (isomorphism classes) closed Riemann surfaces admitting anticonformal automorphisms. A closed Riemann surface that admits a reflection (i.e., an anticonformal involution with fixed points) as an automorphism is called {\em real Riemann surface}; otherwise, it is called {\em pseudo-real Riemann surface}. Pseudo-real Riemann surfaces are examples of Riemann surfaces which cannot be defined over their field of moduli \cite{AQR}. 
In general, a finite group might not be realized as the group of conformal/anticonformal automorphisms, admitting anticonformal ones, of a {pseudo-real Riemann surface} (in \cite{Bu10}, it was observed that a necessary condition for that to happen is for the group to have order a multiple of $4$).

Let us fix a finite abstract group $G$. 
In \cite{Greenberg}, Greenberg proved that there are closed Riemann surfaces $S$ such that 
 $G$ can be see as a subgroup of ${\rm Aut}^{+}(S)$ (conformal action). 
If, moreover, $G$ contains an index two subgroup $H$, then it is possible to find $S$ such that $G<{\rm Aut}(S)$ and $H=G \cap {\rm Aut}^{+}(S)$
(conformal/anticonformal action).
These facts permit to define the {\it strong symmetric genus} $\sigma^{0}(G)$ (respectively, {\em symmetric genus} $\sigma(G)$) of $G$ 
as the minimal genus for a conformal (respectively, conformal/anticonformal) action of $G$ \cite{Burn,Hurwitz,T}.
We note that $\sigma(G) \leq \sigma^{0}(G)$ (if $G$ has no index two subgroups, then the equality holds).
Groups for which $\sigma^{0}(G)=0$ are given by  the cyclic groups, the dihedral groups and the Platonic solid symmetry groups. Those with $\sigma^{0}(G)=1$ are also known \cite{GroT}. As a consequence of the Hurwitz's bound, there are only finitely many (up to isomorphisms) groups of a given strong symmetric genus at least two. It is also known that every integer at least two  is the strong symmetric genus for some finite group \cite{May3}. 
A conformal action of $G$, over a closed Riemann surface, is called {\it purely-non-free} if every element acts with a non-empty set of fixed points.
In \cite{BGH}, it was observed that $G$ acts purely-non-free on some Riemann surface. This permits to define 
the {\it pure symmetric genus}  $\sigma_{p}(G)$ as the minimal genus on which $G$ acts purely-non-free  ($\sigma^{0}(G)\leq \sigma_{p}(G)$).
A compact Klein surface $X$ is canonically doubly covered by a Riemann surface of some genus $g\geq 2$ (also called the {\em algebraic genus} of $X$). Its underlying topological surface is either (i) non-orientable with empty boundary ({\em closed Klein surface}) or (ii) it has non-empty boundary (and can be orientable or not). 
If $k$ is the number of boundary components of $X$, then its topological genus is $\gamma=(g-k+1)/\eta$, where $\eta= 2$ if $X$ is an orientable surface and $\eta=1$ otherwise.
It is known that 
$G$ acts as a group of automorphisms of bordered Klein surfaces (i.e., $k>0$). The minimum algebraic genus $\rho(G)$ of these surfaces is called the {\em real genus} of $G$ \cite{May93}. Also, it is known that $G$ acts as a group of automorphisms of closed Klein surfaces (i.e., $k=0$). The minimum topological genus $\tilde{\sigma}(G)$ of these surfaces is called the {\em symmetric crosscap number} of $G$ \cite{May0}.

In this paper, we study conformal/anticonformal actions of the generalized quasi-dihedral group $G_{n}$, of order $8n$, on closed Riemann surfaces, for $n\geq 2$. Below, we list some of the main results of this paper.
In Corollary \ref{strong}, we observe that $\sigma^{0}(G_{n})$ is equal to $n$. Such a minimal conformal action happens for the quotient orbifold $S/G_{n}$ of signature $(0;2,4,4n)$ (this was previously observed in \cite{MZ7} for $n$ a power of two and in \cite{May3} for $n$ odd). In Theorem \ref{pta}, we prove that, up to homeomorphisms, there is only one triangular action of $G_{n}$ for every $n$. These triangular action is produced in a familiar kind of hyperelliptic Riemann surfaces (Wiman curves of type II \cite{Wi}). They are described, in terms of the corresponding monodromy group and the associated bipartite graph,  in 
Remark \ref{re1c}. In Theorem \ref{pta}, we observe that the triangular action of $G_{n}$ on a closed Riemann surface of genus $n$ is purely-non-free for $n$ even, and it is not purely-non-free for $n$ odd. On the other hand, in Corollary \ref{strong}, we observe that $\sigma_{p}(G_{n})$ is equal to $n$ for $n$ even, and $3n$ for $n$ odd.
In Theorem \ref{VJ}, we describe the isotypical decomposition of the Jacobian variety $J_S$, induced by the triangular action of $G_{n}$. 
As $G_{n}$ has index two subgroups (Lemma \ref{pgg1}), it can be realized as the full group of conformal/anticonformal automorphisms of suitable Riemann surfaces, such that it admits anticonformal ones. As consequence of Proulx class \cite{GroT}, it is well known that $G_{n}$ acts with such a property in genus $\sigma(G_{n})=1$.  In Theorem \ref{hyp}, we observe that the next minimal genus $\sigma^{hyp}(G_{n})\geq 2$ over which $G_{n}$ acts as a group of conformal/anticonformal automorphisms, and admitting anticonformal ones, is $n$ for $n$ even, and $n-1$ for $n$ odd. This permits to observe that for any non-negative integer $g\geq 2$, there is at least one group of symmetric hyperbolic genus $g$ (see, Corollary \ref{shg}).
In Theorem \ref{tumshg}, we prove that this minimal action is unique, up to homeomorphisms. 
We also given integers $k \geq 3$ and $n\geq 2$, in Theorem \ref{tps}, we construct pseudo-real Riemann surfaces, of genus $g=2nk-4n+1$, whose full group of conformal/anticonformal automorphisms is $G_{n}$ (by results in \cite{Hu1}, these Riemann surfaces are non-hyperelliptic). The minimal genus of pseudo-real Riemann surfaces with $G_{n}$ as its full group of automorphisms, is $2n+1$, this was previously proved in \cite[Proposition 3.2]{CL} (see, Proposition \ref{pmprs}). In Corollary \ref{Ups}, we prove that this minimal action is unique up homeomorphisms. 
For integers $n\geq 3$ odd, $l\geq 2$ and $r\geq 1$ odd, we observe that there are pseudo-real Riemann surfaces of genus $4nl+6nr-8n+1$ with $G_{n}$ as group of conformal automorphisms (Theorem \ref{tps1}). Moreover, in this case, in Theorem \ref{mps+}, we prove that the minimal genus of pseudo-real Riemann surfaces, with $G_{n}$ as group of conformal automorphisms,  is $6n+1$ (this minimal action is not unique, see Remark \ref{Ups+}).  For $n$ even, in Theorem \ref{tps2}, we observe that the group $G_{n}$ cannot be realized as group of conformal automorphisms of pseudo-real Riemann surfaces. However, in Theorem \ref{ejemplo}, we constructed pseudo-real Riemann surfaces $S$ admitting $G_{n}$ as an index two subgroup of ${\rm Aut}^{+}(S)$. 
In Theorem \ref{real}, we observe that (a) $\rho(G_{n})$ is equal to $2n+1$ and the action is unique
and (b) $\tilde{\sigma}(G_{n})$ is equal to $2n+2$ but the action is not unique for $n \neq 3$. 
Part (a) was previously observed in \cite{May94} for $n$ a power of two and in \cite{EM} for $n$ odd.

\medskip
\noindent{\bf Notation.} Throughout this paper we denote by $C_n$ the cyclic group of order $n$, by $D_{2m}$ the dihedral group of order $2m$, by $DC_n$ the dicyclic group of order $n$.

\section{Preliminaries}
\subsection{Generalized quasi-dihedral group}\label{Section 3}
The {\it generalized quasi-dihedral group} of order $8n$ is 
 \begin{equation}\label{g1}
G_{n}:=\langle x, y:\  x^{4n}=y^{2}=1, \,  yxy=x^{2n-1}\rangle=C_{4n}\rtimes_{2n-1}C_{2}.
\end{equation} 
 
Using the relation $yx^{k}=x^{(2n-1)k}y$, for $k \geqslant 1$, one may check the following properties.
 
\begin{lemma}\label{Proposition2}\label{pgg1} 
Let $n\geqslant 2$ be an integer. Then 
\setlength{\leftmargini}{8mm}
\begin{enumerate}
\item The group $G_{n}$ is a non-abelian group of order $8n$, every element has a unique presentation of the form $y^{j}x^{i}$, where $j$ and $i$ are integers with $ j \in\lbrace 0, 1\rbrace$ and $0\leq i \leq 4n$, and these have order
$$\mid x^{i}\mid =\dfrac{4n}{{\rm gcd}(i, 4n)},\quad  \mid yx^{i}\mid=\begin{cases} 2, & \text{if\ $i$ either even or zero},\\ 4, & \text{if\ $i$ odd}.\end{cases}$$

 \item The index two subgroups of $G_{n}$ are exactly the following ones
 \begin{center}$ C_{4n}=\langle x\rangle,\qquad \ D_{4n}=\langle x^{2}, y \rangle, \qquad \ DC_{4n}=\langle x^{2}, yx \rangle.$\end{center}
 \item The number of involutions of the groups $G_{n}$ and $D_{4n},$ is $2n+1$ and, the number of involutions of the groups $C_{4n}$ and $DC_{4n}$ is one.

\item  For $n$ even there are exactly $2n+3$ conjugacy classes of $G_{n}$, with representatives given in the following table 
\begin{center}
\begin{tabular}{|c|c|c|c|c|c|c|c|c|}
\hline 
Rep. & $1$ & $x$ & $x^{2}$ & $\cdots$ & $x^{2n-1}$ & $x^{2n}$ & $y$ & $yx$ \\ 
\hline 
size & 1 & 2 & 2 & $\cdots$ & 2 & 1 & $2n$ & $2n$ \\ 
\hline 
\end{tabular} 
\end{center}
\item   For $n$ odd there are exactly $2n+6$ conjugacy classes of $G_{n}$, with representatives given in the following table
\begin{center}

\scalebox{0.75}{\begin{tabular}{|c|c|c|c|c|c|c|c|c|c|c|c|c|c|c|c|c|c|c|c|}
\hline 
Rep. & $1$ & $x$  & $\cdots$ & $x^{n-1}$ &$x^{n}$ & $x^{n+1}$ & $x^{n+3}$ & $\cdots$ & $x^{2n-1}$&$x^{2n}$ &$x^{2n+1}$ & $\cdots$ & $x^{3n-1}$ &$x^{3n}$ & $y$ & $yx$& $yx^{2}$ &  $yx^{3}$ \\ 
\hline 
size & 1 & 2 &  $\cdots$ & 2 &1  & 2 &2& $\cdots$ & 2& 1 & 2& $\cdots$ & 2 & 1& $n$ & $n$ & $n$ &  $n$ \\ 
\hline 
\end{tabular}} 
\end{center}

\item The automorphisms of the group $G_{n}$ are given by 
\begin{center}$\psi_{u, 2v}(x)=x^{u} \quad \text{and}\quad  \psi_{u, 2v}(y)=yx^{2v},$\end{center} 
where $u \in \{1,\ldots,4n-1\}$ is such that ${\rm gcd}(u, 4n)=1$ and $v\in \{0,1,\ldots,2n-1\}$. The group ${\rm Aut}(G_{n})$ has order $\phi(4n)\cdot 2n$, where $\phi$ is the Euler function.
\end{enumerate}
\end{lemma}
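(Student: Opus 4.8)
The plan is to derive everything from the single commutation identity $yx^{k}=x^{(2n-1)k}y$ together with its squared consequence, and then read off each item. First I would record the two basic computations that drive the whole lemma: from $yx^{k}=x^{(2n-1)k}y$ one gets $xy=yx^{2n-1}$, and multiplying $yx^{i}$ by itself gives $(yx^{i})^{2}=x^{(2n-1)i}\,y\,y\,x^{i}=x^{2ni}$. Since $2ni\equiv 0 \pmod{4n}$ exactly when $i$ is even, this yields part (1): $yx^{i}$ has order $2$ for $i$ even and order $4$ for $i$ odd (as $(yx^{i})^{4}=x^{4ni}=1$), while $|x^{i}|=4n/\gcd(i,4n)$ is the usual cyclic formula. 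The normal form $y^{j}x^{i}$ follows by using the identity to push every $y$ to the left, and uniqueness is forced by the order count $|G_{n}|=2\cdot 4n=8n$ coming from the semidirect-product presentation. I would also note $(2n-1)^{2}\equiv 1\pmod{4n}$, which makes $y$-conjugation an involution on $\langle x\rangle$.

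For part (2) I would pass to $\mathrm{Hom}(G_{n},C_{2})$: a homomorphism to $C_{2}$ is free to send $x,y$ to either value, because the relation $yxy=x^{2n-1}$ becomes trivial mod $2$ (as $2n-1$ is odd), so there are exactly three nontrivial characters and hence three index-two subgroups; identifying their kernels gives $\langle x\rangle$, $\langle x^{2},y\rangle$ and $\langle x^{2},yx\rangle$, and checking the defining relations (e.g.\ $yx^{2}y=x^{-2}$ and $(yx)^{2}=x^{2n}$) identifies them as $C_{4n}$, $D_{4n}$ and $DC_{4n}$. Part (3) is then immediate from part (1): the only involution among the $x^{i}$ is $x^{2n}$, and the involutions outside $\langle x\rangle$ are the $2n$ elements $yx^{i}$ with $i$ even, giving $2n+1$ in $G_{n}$ and in $D_{4n}$, while the cyclic $C_{4n}$ and the dicyclic $DC_{4n}$ (whose elements outside $\langle x^{2}\rangle$ all have order $4$) contribute only the single involution $x^{2n}$.

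The core of the lemma is parts (4)--(5), and I expect the main obstacle to be organizing the conjugacy computation so that the parity dichotomy appears cleanly. The action is controlled by two formulas: $y$-conjugation sends $x^{i}\mapsto x^{(2n-1)i}$ and $yx^{i}\mapsto yx^{(2n-1)i}$, while $x$-conjugation fixes $\langle x\rangle$ pointwise and sends $yx^{i}\mapsto yx^{i+(2n-2)}$. The key arithmetic fact is $\gcd(2n-2,4n)=2$ for $n$ even and $=4$ for $n$ odd. Thus $x^{i}$ is central iff $(2n-2)i\equiv 0\pmod{4n}$, giving central elements $\{1,x^{2n}\}$ ($n$ even) or $\{1,x^{n},x^{2n},x^{3n}\}$ ($n$ odd); the remaining powers of $x$ pair into classes $\{x^{i},x^{(2n-1)i}\}$ of size two, numbering $2n-1$ and $2n-2$ respectively. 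For the elements $yx^{i}$, the $x$-orbit is the coset $i+\langle 2n-2\rangle$ in $\mathbb{Z}/4n$, which is a single parity class of size $2n$ when $n$ is even and a single residue class mod $4$ of size $n$ when $n$ is odd; since $y$-conjugation preserves $i\bmod 4$ (because $2n-1\equiv 1\pmod 4$ for $n$ odd), these orbits are not further merged, producing the two classes $y,yx$ for $n$ even and the four classes $y,yx,yx^{2},yx^{3}$ for $n$ odd. Summing gives $2+(2n-1)+2=2n+3$ and $4+(2n-2)+4=2n+6$, and a check that the class sizes add to $8n$ confirms the tables.

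Finally, for part (6) I would first argue that $\langle x\rangle$ is characteristic, being the only cyclic index-two subgroup by part (2), so any automorphism $\psi$ satisfies $\psi(x)=x^{u}$ with $\gcd(u,4n)=1$, allowing $\phi(4n)$ choices. The image $\psi(y)$ must be an involution outside $\langle x\rangle$, and by part (3) these are exactly the $2n$ elements $yx^{2v}$. Because every $yx^{2v}$ conjugates $x^{u}$ to $x^{(2n-1)u}$ (the central factor $x^{2v}$ acting trivially on $\langle x\rangle$), the relation $\psi(y)\psi(x)\psi(y)=\psi(x)^{2n-1}$ holds automatically, so each pair $(u,2v)$ defines a genuine homomorphism, visibly surjective and hence an automorphism; distinct pairs give distinct maps, whence $|\mathrm{Aut}(G_{n})|=\phi(4n)\cdot 2n$. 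The only point requiring care is excluding $\psi(y)=x^{2n}$, which is ruled out since then $\psi$ would fail to be surjective.
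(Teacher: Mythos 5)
Your proof is correct, and it follows exactly the route the paper indicates: the paper offers no written proof beyond the remark that everything follows from the relation $yx^{k}=x^{(2n-1)k}y$, and your argument is precisely a careful execution of that plan (the identity $(yx^{i})^{2}=x^{2ni}$, the three characters to $C_{2}$, the conjugation formulas $x^{i}\mapsto x^{(2n-1)i}$ and $yx^{i}\mapsto yx^{i+2n-2}$ with the key fact $\gcd(2n-2,4n)\in\{2,4\}$ according to the parity of $n$, and the characteristic subgroup $\langle x\rangle$ for the automorphism count). The only wording slip is calling $x^{2v}$ a ``central factor'' in part (6) --- it is not central in general, but it does commute with $\langle x\rangle$, which is all your argument actually uses, so the proof stands.
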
 

\subsection{NEC groups}\label{S2}
Let $\mathcal{L}$ be the group of isometries of the hyperbolic upper half-plane $\h$ and $\mathcal{L}^{+}$ be its index two subgroup of orientation-preserving elements.
An NEC {\em group} is a discrete subgroup $\Delta$ of $\mathcal{L}$ such that the quotient space $\h/\Delta$ is a compact surface. An NEC group contained in $\mathcal{L}^{+}$ is called a {\em Fuchsian group}, and a {\em proper} NEC {\em group} otherwise. If $\Delta$ is a proper NEC group, then $\Delta^{+}=\Delta\cap \mathcal{L}^{+}$ is called its {\em canonical Fuchsian subgroup}. Note that $[\Delta:\Delta^{+}]=2$ and $\Delta^{+}$ is the unique subgroup of index 2 in $\Delta$ contained in $\mathcal{L}^{+}$.
In general, the algebraic structure of an NEC group $\Delta$ is described by the so-called {\em signature} $s(\Delta)$ \cite{Mac, Wil}:
\begin{equation}\label{E21} s(\Delta)=(h;\pm;[m_1,\cdots, m_r];\{(n_{11},\cdots, n_{1s_1}),\cdots, (n_{k1},\cdots, n_{ks_k})\}),\end{equation} 
where $h,r,k,m_i, n_{ij}$ are integers with $h, r, k\geq 0$ and $m_i, n_{ij}>1$ for all $i, j$. Here $h$ is the topological genus of the surface $\h/\Delta$ and $``+"$ means that $\h/\Delta$ is {\em orientable}, and $``-"$ means that $\h/\Delta$ is {\em non-orientable}. The number $k$ is the number of connected boundary components of $\h/\Delta$. We call $m_i$ the {\em proper periods}, $n_{ij}$ the {\em periods}, and $(n_{i1},\cdots, n_{is_i})$ the {\em period-cycles} of $s(\Delta)$. We will denote by $[-]$, $(-)$ and $\{-\}$ the cases when $r=0$, $s_i=0$ and $k=0$, respectively. When there are no proper periods and there are no period-cycles in $s(\Delta)$ we say $\Delta$ is a {\em surface group}.
The signature provides a presentation of $\Delta$ \cite{Mac, Wil}, by {\em generators}:
\begin{eqnarray*}
\text{(elliptic generators)} && \beta_i\in \Delta^{+} \quad (i=1,\cdots, r); \\
\text{(reflections)} && c_{ij}\in \Delta\setminus \Delta^{+}\quad (i=1,\cdots, k;\ j=0,\cdots, s_i); \\
\text{(boundary generators)} && e_i\in \Delta^{+}\quad (i=1,\cdots, k); \\
 \text{(hyperbolic generators)} && a_i,b_i\in \Delta^{+}\quad (i=1,\cdots, h),\ \text{if $\h/\Delta$ is orientable}; \\
  \text{(glide reflections generators)} && d_i\in \Delta\setminus\Delta^{+}\quad (i=1,\cdots, h), \ \text{if $\h/\Delta$ is non-orientable};
  \end{eqnarray*}
   and {\em relations}:
   \begin{eqnarray*}
&& \beta_i^{m_i}=1\quad (i=1,\cdots, r);\\
&& e_ic_{i0}e_i^{-1}c_{is_i}=1\quad (i=1,\cdots, k);\\
&& c_{ij-1}^{2}=c_{ij}^2=(c_{ij-1}c_{ij})^{n_{ij}}=1\quad (i=1,\cdots, k;\ j=1,\cdots, s_i);\\
\text{(long relation)}&& \prod_{i=1}^r\beta_i\prod_{i=1}^ke_i\prod_{i=1}^h[a_i,b_i]=1,\quad \text{if $\h/\Delta$ is orientable};\\
\text{(long relation)} && \prod_{i=1}^r \beta_i\prod_{i=1}^ke_i\prod_{i=1}^hd_i^2=1, \quad \text{if $\h/\Delta$ is non-orientable},
\end{eqnarray*} where $1$ denotes the identity map $id_{\h}$ in $\h$ and $[a_i,b_i]=a_ib_ia_i^{-1}b_i^{-1}$. 

The hyperbolic {\em area} of $\Delta$ with signature (\ref{E21})
is the {\em hyperbolic area} of any fundamental region for $\Delta$, and is given by \begin{equation}\label{E22} \mu (\Delta)=2\pi\Big(\eta h+k-2+\sum_{i=1}^r\big(1-\frac{1}{m_i}\big)+\frac{1}{2}\sum_{i=1}^k\sum_{j=1}^{s_i}\big(1-\frac{1}{n_{ij}}\big)\Big),\end{equation} with $\eta=2$ or 1 depending on whether or not $\h/\Delta$ is orientable. An NEC group with signature (\ref{E21}) actually exists if and only if the right-hand side of (\ref{E22}) is greater than 0.
The {\em reduced area} of an NEC group $\Delta$, denoted by $|\Delta|^*$, is given by $\mu(\Delta)/2\pi$. If $\Gamma$ is a subgroup of $\Delta$ of finite index, then the Riemann-Hurwitz formula holds $[\Delta:\Gamma]=\frac{\mu(\Gamma)}{\mu(\Delta)}.$

\subsection{Topologically equivalent conformal/anticonformal actions}\label{Sec:signatures}
Let $S$ be a closed Riemann surface $S$ of genus $g \geq 2$. By the uniformization theorem, up to biholomorphisms, $S=\h/K$, where $K$ is a Fuchsian surface group.
We say that a finite group $G$  {\em acts} as a group of conformal (respectively, conformal/anticonformal) automorphisms of $S$ if it can be realizable as a subgroup of ${\rm Aut}^{+}(S)$ (respectively, ${\rm Aut}(S)$).
This is equivalent to the existence of a Fuchsian (respectively, an NEC) group  $\Delta$, containing $K$ as a normal subgroup, and of an epimorphism $\theta: \Delta \to G$ whose kernel is $K$, we say that $\theta$ provides a conformal (respectively, conformal/anticonformal) action of $G$ in $S$. 
Two conformal/anticonformal actions $\theta_1$, $\theta_2$ are {\em topologically equivalent} if there is an $\omega\in {\rm Aut}(G)$ and an $h\in {\rm Hom}^{+}(S)$ such that $\theta_2(g)=h\theta_1(\omega(g))h^{-1}$ for all $g\in G$.
This is equivalent to the existence of automorphisms $\phi \in {\rm Aut}(\Delta)$ and $\omega \in {\rm Aut}(G)$ such that
$\theta_2=\omega\circ \theta_1\circ \phi^{-1}$.

If $G$ acts conformally, then $S/G=\h/\Delta$ is a closed Riemann surface of genus $h$ and it has exactly $n$ cone points of respective cone orders $k_{1},\ldots, k_{n}$; the tuple $(h;k_{1},\ldots,k_{n})$ is also called the {\em signature} of $S/G$. If $h=0$ and $n=3$, then we talk of a {\it triangular} action.

\subsection{Dessins d'enfants}\label{Sec:dessins}
A {\it dessin d'enfant} corresponds to a bipartite map ${\mathcal G}$ on a closed orientable surface $X$. These objects were studied as early as the nineteenth century and rediscovered by Grothendieck in the twentieth century in his ambitious research outline ~\cite{Gro} (see also, the recent books \cite{GiGo,JW}). The dessin d'enfant
defines (up to isomorphisms) a unique Riemann surface structure $S$ on $X$, together with a holomorphic branched cover $\beta:S \to \widehat{\mathbb C}$  whose branch values are containing in the set $\{\infty,0,1\}$ ($\beta$ is called a {\em Belyi map}, $S$ is a {\em Belyi curve} and $(S,\beta)$ is a {\em Belyi pair}). Conversely, to every Belyi pair $(S,\beta)$ there is associated a dessin  d'enfant in $S$ ($\beta^{-1}(0)$ and $\beta^{-1}(1)$ provide, respectively, the white and black vertices, and the edges being $\beta^{-1}([0,1])$). 
The dessin is called {\it regular} if its group of automorphisms $G$ (i.e, the deck group of $\beta$) acts transitively on the set of edges (equivalently, $\beta$ is a Galois branched covering). In this case, $G$ defines a triangular action and, in particular, $G$ is generated by two elements (in fact, every finite group generated by two elements appears as the group of automorphisms of some regular dessin). Examples of such type of groups are the generalized quasi-dihedral groups $G_{n}$ (see, Section \ref{Section 3}).
There is a bijection between (i) equivalence classes of regular dessins d'enfants with group of automorphisms $G$ and (ii) $G$-conjugacy classes of pairs of generators of $G$.
 
\section{Triangular conformal actions of $G_{n}$}
In this section, we describe the triangular conformal actions of $G_{n}$. First, we observe the well know family $S_n$ of hyperelliptic Riemann surface admitting a triangular conformal action of $G_{n}$. 

\subsection{The strong and pure symmetric genus of $G_{n}$}
If $n\geq 2$, then the Riemann surface $S_{n}$ of genus $n$, defined by the algebraic curve $w^{2}=z(z^{2n} -1)$, is called the Wiman curve of type II \cite{Wi}. 
Some conformal automorphisms of $S_n$ are given by $x(z, w) = (\rho_{2n} z, \rho_{4n} w)$, $y (z, w) = (\rho_{2n}/ z, i\rho_{4n}w / z^{n+1})$ with $(\rho_{2n})^{n}=-1$ and $\langle x,y \rangle \cong G_{n}$. The quotient orbifold $S_n/\langle x, y \rangle$ has signature $(0; 2,4, 4n)$. If $n\geq 3$, then this is a maximal signature  \cite{S.}, so ${\rm Aut}^{+}(S_n)=\langle x, y \rangle$. If $n=2$, then $S_n$ has as an extra conformal automorphism of order three, given by $t(z, w) = (i (1-z) / (1 + z), 2 (1 + i) w / (z + 1)^{3})$. In this case, ${\rm Aut}^{+}(S_n) =\langle x, y, t \rangle$ (a group of order 48). 

The bipartite graph, associated to the regular dessin d'enfant on $S_n$ induced by $G_{n}$, is the graph $K_{2, 2n}^2$, which is obtained from the complete bipartite graph (see, \cite[pp. 17]{D}) $K_{2, 2n}$ in which each of its edges is replaced by two edges. In order to observe this, consider a regular branch cover $\beta:S_n\to \widehat{\mathbb{C}}$, whose deck group is $G_{n}$. We may assume that $\beta=Q\circ P$, where $P:S_n\to \widehat{\mathbb{C}}$ has deck group $C_{4n}=\langle x\rangle$, and $Q(x)=x^2$, whose deck group is $G_{n}/\langle x\rangle\cong C_2$ (see also, Remark \ref{re1c} in where the corresponding monodromy group is described).

\begin{theo}\label{pta}
Let $S$ be a closed Riemann surface such that ${\rm Aut}^{+}(S)=G_{n}$, for $n \geq 2$, and such that $S/G_{n}$ has triangular signature.
Then  
\setlength{\leftmargini}{8mm}
\begin{enumerate} 
\item[ (a)] $S/G_{n}$  has signature $(0; 2,4,4n)$ and $S$ is isomorphic to $S_n$.
\item[ (b)] If $n\geq 2$ even, the action of $G_{n}$ is purely-non-free.
\item[ (c)] If $n\geq 3$ odd, the action of $G_{n}$ is not purely-non-free.
\end{enumerate}
\end{theo}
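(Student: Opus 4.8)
The plan is to translate the hypothesis into purely group-theoretic data about $G_n$ and then analyze it. A triangular action means $S/G_n=\h/\Delta$ with $\Delta$ a Fuchsian triangle group of some signature $(0;m_1,m_2,m_3)$, together with an epimorphism $\theta\colon\Delta\to G_n$ whose kernel $K$ is torsion-free, so that $S=\h/K$. Writing $g_i=\theta(\beta_i)$ for the images of the three elliptic generators, torsion-freeness of $K$ forces $|g_i|=m_i$, the long relation gives $g_1g_2g_3=1$, and surjectivity gives $\langle g_1,g_2\rangle=G_n$. First I would pass to the quotient $\pi\colon G_n\to G_n/\langle x\rangle\cong C_2$: since $g_1g_2g_3=1$, an even number of the $g_i$ lie outside $\langle x\rangle$, and this number cannot be $0$ (otherwise the abelian group $\langle x\rangle$ would generate the non-abelian $G_n$). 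Hence exactly two of the $g_i$ are reflections $yx^{*}$ and one is a rotation $x^{*}$.

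Next, using the identity $x^{a}y=yx^{(2n-1)a}$ (whence $yx^{a}\cdot yx^{b}=x^{(2n-1)a+b}$) from Lemma \ref{pgg1}, I would compute the rotation explicitly: if the two reflections are $yx^a,yx^b$ then the rotation equals $x^{-c}$ with $c=(2n-1)a+b$, and a short argument with the normal subgroup $\langle x\rangle$ shows $\langle yx^a,x^c\rangle=G_n$ iff $\gcd(c,4n)=1$. This forces $c$ odd, hence $a+b$ odd, so exactly one of $yx^a,yx^b$ has even exponent and one odd; by the order formula in Lemma \ref{pgg1}(1) their orders are $2$ and $4$, while $x^{-c}$ has order $4n$. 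Thus the unordered period triple is $(2,4,4n)$, giving the signature in (a), and Riemann--Hurwitz then returns genus $n$. For the isomorphism $S\cong S_n$ I would prove uniqueness of the action: using the automorphisms $\psi_{u,2v}$ of Lemma \ref{pgg1}(6) together with inner automorphisms, normalize $(g_1,g_2,g_3)$ to the canonical triple $(y,yx,x^{-1})$ (first send the order-$2$ reflection to $y$ by solving $2v+ua\equiv 0\ (\mathrm{mod}\ 4n)$, then rescale the exponent of the order-$4$ reflection to $1$ by a suitable $\psi_{u,0}$). Since $S_n$ realizes this very action and triangle groups are rigid (their Teichm\"uller space is a point), $S$ is biholomorphic to $S_n$.

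For (b) and (c) I would use the standard fact that the non-identity elements of $G_n$ with non-empty fixed-point set are exactly those conjugate to a non-trivial power of some $g_i$, that is, the non-trivial elements of the union of $G_n$-conjugates of $\langle y\rangle$, $\langle yx\rangle$ and $\langle x\rangle$. Since $\langle x\rangle$ is normal, it contributes all of $\langle x\rangle\setminus\{1\}$; the remaining elements are the $2n$ involutions $yx^{\mathrm{even}}$ and the $2n$ order-$4$ elements $yx^{\mathrm{odd}}$. The dichotomy is then read off from the conjugacy-class data in Lemma \ref{pgg1}(4)--(5): for $n$ even the classes of $y$ and of $yx$ have size $2n$ and therefore exhaust all of $yx^{\mathrm{even}}$ and all of $yx^{\mathrm{odd}}$ respectively, so together with $\langle x\rangle\setminus\{1\}$ every non-identity element has fixed points and the action is purely-non-free. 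For $n$ odd these classes have size only $n$, and I would exhibit the involution $yx^{2}$ as an element with empty fixed-point set: it lies outside $\langle x\rangle$, it is not conjugate to $y$ (whose class consists of the $yx^{4s}$), and the only involution among the powers of $yx$ is $x^{2n}\in\langle x\rangle$; hence $yx^{2}$ is not covered and the action is not purely-non-free.

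I expect the main obstacle to be the uniqueness normalization in (a): controlling the orbit of the generating triple under $\mathrm{Aut}(G_n)$ requires the explicit description of the automorphisms and a careful check that the normalizing steps preserve the order/parity constraints, in particular that the rescaled exponent remains a unit modulo $4n$. The fixed-point counts in (b)--(c) are then essentially bookkeeping, but they hinge entirely on the even/odd difference in the sizes of the reflection conjugacy classes recorded in Lemma \ref{pgg1}, which is exactly what makes $yx^{2}$ escape the fixed-point locus precisely when $n$ is odd.
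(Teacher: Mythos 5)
Your proposal is correct, but for part (a) it follows a genuinely different route from the paper. The paper works geometrically: it passes to the quotient orbifold $\mathcal{O}=S/\langle x\rangle$, studies the involution $\widehat{y}$ induced on the sphere, locates the cone points at $\pm 1$, $0$ and possibly $\infty$, and eliminates the competing signature $(0;4,4,4n)$ by checking non-admissibility of an epimorphism. You instead work entirely inside $G_n$ with the generating triple $(g_1,g_2,g_3)$: the parity argument modulo $\langle x\rangle$ forces two reflections and one rotation, the identity $yx^a\cdot yx^b=x^{(2n-1)a+b}$ plus the generation criterion $\gcd(c,4n)=1$ forces $a+b$ odd, and the order formula then pins the periods to $\{2,4,4n\}$ with no case analysis on cone points; your normalization of the triple to $(y,yx,x^{-1})$ via $\psi_{u,2v}$ is essentially the computation the paper defers to Corollary \ref{strong}(1), and combined with rigidity of triangle groups it yields $S\cong S_n$ more directly than the paper's separate remark deriving the algebraic model $v^{4n}=u^{2n}(u-1)(u+1)^{2n-1}$. (One point worth making explicit in your write-up: the claim $\langle yx^a,x^c\rangle=G_n\Leftrightarrow\gcd(c,4n)=1$ needs the observation that $\langle yx^a,x^c\rangle\cap\langle x\rangle=\langle x^{\gcd(c,2na,4n)}\rangle$ and that any prime dividing $\gcd(c,4n)$ also divides $2na$; this is routine but is the only step you wave at.) For (b) and (c) both arguments ultimately rest on the conjugacy-class sizes of Lemma \ref{pgg1}(4)--(5), but yours is cleaner: you apply the criterion ``fixed points iff conjugate to a power of some $g_i$'' to the normalized triple and read off that the three classes $\langle x\rangle\setminus\{1\}$, $\mathrm{cl}(y)$, $\mathrm{cl}(yx)\cup\mathrm{cl}(yx^3)\cup\{x^{2n}\}$ exhaust $G_n\setminus\{1\}$ exactly when $n$ is even, exhibiting $yx^2$ as the fixed-point-free element for $n$ odd, whereas the paper counts fixed points through the cyclic cover $\pi:S\to\widehat{\mathbb C}$. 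What the paper's geometric approach buys is the explicit location of the branch data on $\widehat{\mathbb C}$, which it reuses later (e.g., in the equation-finding sections); what your approach buys is a self-contained classification of all admissible generating triples, which makes the uniqueness statement and the purely-non-free dichotomy immediate.
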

  
\begin{proof}
Let us consider the presentation \eqref{g1} of generalized quasi-dihedral group $G_{n}$. Assume that $G_{n}$ acts in a triangular way on the closed Riemann surface $S$.

\medskip
\noindent ({\bf a})  Let $\pi: S \rightarrow \mathcal{O}= S/\langle x \rangle$ be a branched regular cover map with deck group $\langle x \rangle$. As $\langle x \rangle$ is a normal subgroup of $G_{n}$ and $y^{2}=1\in \langle x \rangle$, the automorphism $y$ induces a conformal involution $\widehat{y}$ of the quotient orbifold $\mathcal{O}$, so it permutes the branch values of $\pi$ (i.e., the cone points of $\mathcal{O}$) and $S/G_{n} = \mathcal{O}/ \langle \widehat{y} \rangle$. The triangular property of the action of $G_{n}$ on $S$ (together with the Riemann-Hurwitz formula) ensures that $\mathcal{O}$ has genus zero and that its set of cone points are given by a pair of points $p_{1}, p_{2}$ (which are permuted, but not fixed, by $\widehat{y}$) and one or both of the fixed points of $\widehat{y}$.
By the uniformization theorem, we may identify $\mathcal{O}$ with the Riemann sphere $\widehat{\mathbb{C}}$. Up to post-composition of $\pi$ with a suitable M\"obius transformation, we may also assume $\widehat{y}(z) = -z,$ $p_{1}= -1$ and  $p_{2}= 1$. As the finite groups of M\"obius transformations are either cyclic, dihedral, ${\rm A_{4}}$, ${\rm A_{5}}$ or ${\rm S_{4}}$, and as $G_{n}$ is not isomorphic to any of them, the surface $S$ cannot be of genus zero. This (together with the Riemann-Hurwitz formula) asserts that $\pm 1$ are not the only cone points of $\mathcal{O}$, at least one of the two fixed points of $\widehat{y}$ must also be a cone point (we may assume that $0$ is another cone point of $\mathcal{O}$). For the point $\infty$  we have the following.

\medskip
\noindent{\bf Case $1$} ($\infty$ is a cone point of $\mathcal{O}$). In this case, the cone points of $\mathcal{O}$ are $\pm 1$, $0$ and $\infty$. As $0$ and $\infty$ are fixed points of $\widehat{y}$, each one them has points in its preimage on $S$ with $G_{n}$-stabilizer generated by an element of the form $yx^{k}$ with $k$ odd (if $k$ is either even or zero, then a point in the preimage of $0$ or $\infty$ induces a conic point of order 2 on $S/G_{n}$ which contradicts the fact that $0$ and $\infty$ are fixed points of $\widehat{y}$ and that them are cone points of $\mathcal{O}$). Thus, the points in the preimage of 0 and $\infty$ induce conic points of order $4$ on $S/G_{n}$, and that $0$ and $\infty$ are conic points of order $2$ in $\mathcal{O}$. 
  
 As $x^{2n}\in \langle yx^k\rangle$ with $k$ odd, and this does not generate $\langle x\rangle$, the $\langle x\rangle$-stabilizer of any point on $-1$ (and also about $1= \widehat{y}(-1)$) must be stabilized by a non-trivial power $x^{m}$ (where we assume $m$ is a divisor of $4n$) such that $\langle x^{m}, x^{2n} \rangle =\langle x \rangle$. From where $m$ must be prime relative to $2n$, so we have $m=1$. Thereby, $\mathcal{O}$ has signature $(0;2,2, 4n, 4n)$ and $S/G_{n}$ has signature $(0;4, 4, 4n).$ But the signature $(0;4, 4, 4n)$ is not admissible for the action of $G_{n}$ on $S$ (i.e., there is no an epimorphism from a Fuchsian group with signature $(0;+;[4,4,4n];\{-\})$ on  $G_{n}$).

\medskip
\noindent{\bf Case $2$} ($\infty$ is not a cone point of $\mathcal{O}$). In this case the $G_{n}$-stabilizer of any point on $\infty$ does not contain a non-trivial power of $x$, this has points in its preimage with $G_{n}$-stabilizer generated by an element of the form $y$ or  $yx^{k}$ with $k$ even. So, $\infty$ induces a conic point of order 2 on $S/G_{n}.$ Thereby, the cone points of $\mathcal{O}$ are $\pm 1$ and $0$. As $0$ is fixed point of  $\widehat{y}$, this has points in its preimage on $S$ with $G_{n}$-stabilizer generated by an element of the form $yx^{k}$ with $k$ odd (if $k$ is either even or zero, then a point in the fiber of $0$ induces a conic point of order 2 on $S/G_{n}$ which contradicts the fact that $0$ is a fixed point of $\widehat{y}$ and that this is a cone point of $\mathcal{O}$). Thus, the points in the preimage of $0$ produce a conic point of order $4$ on $S/G_{n}$ and that $0$ is a conic point of order $2$ in $\mathcal{O}$. 

As $x^{2n}\in \langle yx^k\rangle$ with $k$ odd, and this does not generate $\langle x\rangle$. A similar argument as in the Case 1 is used to obtain that $\mathcal{O}$ has signature $(0;2, 4n, 4n)$ and $S/G_{n}$ has signature $(0;2, 4, 4n).$ The signature $(0;2, 4, 4n)$ is admissible for the action of $G_{n}$ on $S$ (i.e., there is an epimorphism $\theta$ of a Fuchsian group with signature $(0;+;[2,4,4n];\{-\})$ on $G_{n}$ with torsion-free kernel $K={\rm ker}(\theta)$ such that $S=\h/K$). 
  
 \medskip  
 \noindent ({\bf b}) Let $n$ even and $\pi:S\to \widehat{\mathbb{C}}$ be a branched regular cover map with deck group $\langle x\rangle$ as in (a). Then can be seen that the number of fixed points of $x$ is exactly two and $x^{2n}$ has exactly $2+2n$ fixed points. As $(yx)^{2}=x^{2n}$, the automorphism $yx$ induce an involution $\widehat{yx}$ of the quotient orbifold $\mathcal{O}$ with exactly two fixed points. On the other hand, since the point $0$ is the projection of the fixed points of $(yx)^2$ (proof of the item (a)), and the fact that the number of elements in the conjugacy class of $yx$ is $2n$, we have that $yx$ have exactly two fixed points. As $y$ and $yx^k$ with $k$ even, are in the same conjugacy class (Lemma \ref{pgg1}), from the proof of the item (a) (i.e., the point $\infty$ have points in its preimage with $G_{n}$-stabilizer generated by elements of the form $y$ or $yx^k$ with $k$ even), we conclude that every element of $G_{n}$ acts with fixed points.

\medskip  
\noindent ({\bf c}) Let $n$ odd and $\pi:S\to \widehat{\mathbb{C}}$ be a branched regular cover map with deck group $\langle x\rangle$ as in (a). Then can be seen that the number of fixed points of $x$ is exactly two and that $x^{2n}$ has exactly $2+2n$ fixed points. As $(yx)^{2}=x^{2n}$, the automorphism $yx$ induce an involution $\widehat{yx}$ of the quotient orbifold $\mathcal{O}$ with exactly two fixed points. On the other hand, since the point $0$ is being the projection of the fixed points of $x^{2n}$ (proof of the item (a)), and the fact that the number of elements in the conjugacy class of $yx$ (and of $yx^3$) is $n$, we have that $yx$ (also $yx^3$) must have exactly two fixed points. As $y$ and $yx^2$ not are in the same conjugacy class (Lemma \ref{pgg1}), from the proof of the item (a), we have that $y$ or $yx^2$ acts with fixed points. 
 \end{proof}
  
 \begin{remark}
Another (singular) model of $S$ is given by the affine algebraic curve $v^{4n}=u^{2n}(u-1)(u+1)^{2n-1}$ with conformal automorphisms $x(u, v)=(u, \rho_{4n}v)$ and $y(u, v)=(-u, v^{2n-1}/u^{n-1}(u+1)^{n-1})$ such that $\langle x, y\rangle\cong G_{n}$. In fact, as $S/G_{n}$ has signature $(0;2, 4, 4n)$ (by the Riemann-Hurwitz formula), $S$ has genus $n$. As seen in the proof of part (a) of Theorem \ref{pta},  $\pi: S \rightarrow \widehat{\mathbb{C}}$ is a cyclic branched regular covering, branched at the points $\pm 1$ (with branching order $4n$) and at the point $0$ (with branching order $2$). In particular, an equation for $S$ must be of the form $w^{4n}=t^{\alpha}(t-1)^{\beta}(t+1)^{\gamma},$ where $\alpha, \beta, \gamma \in \lbrace 1, \ldots ,4n-1 \rbrace$ are such that: (i) $\beta$ and $\gamma$ are both relative primes to $4n$, (ii) ${\rm gcd}(\alpha,4n)=2n$, (iii) $\alpha+\beta + \gamma \equiv 0 \, {\rm mod} \, (4n)$. In this model, $\pi$ corresponds to the projection $(t, w) \mapsto t$. Condition (ii) implies that $\alpha=2n$. By the condition (iii), we may suppose, without loss of generality, that $\beta=1$; so $\gamma= 2n-1$. In this way, we have obtained the uniqueness, except for isomorphisms of $S$.
 \end{remark}
      
\begin{remark}\label{re1c}
Let us consider the following permutations of $\mathfrak{S}_{8n}$
$$\eta=(1,2,\ldots,4n)(4n+1, 4n+2,\ldots, 8n),$$
$$\sigma=\prod_{1=k=2l+j}^{2n}(k,8n-((1-j)2n+(k-1))\prod_{2n+1=k=2l+j}^{4n}(k,8n-((j-1)2n+(k-1)).$$
Then, $\eta^{4n}=\sigma^{2}=1$,  $\sigma \eta \sigma=\eta^{2n-1}$, and  $\langle \eta, \sigma \rangle \cong G_{n},$ where the isomorphism is the one taking $\eta$ to $x$ and $\sigma$ to $y$.  If $\tau=\sigma \eta^{4n-1}$, then $\sigma \tau \eta =1$, and the pair $(\eta, \tau)$ determines the monodromy group associated to the regular dessin d'enfant of signature $(0;2, 4, 4n)$ as described in the Theorem \ref{pta}. This permits to see that the associated bipartite graph of this dessin d'enfant is $K_{2,2n}^2$. For $n=3$, we have the following dessin d'enfant (see, Figure 1).
 \end{remark}  
 
 \begin{center}
	\includegraphics[width=6cm]{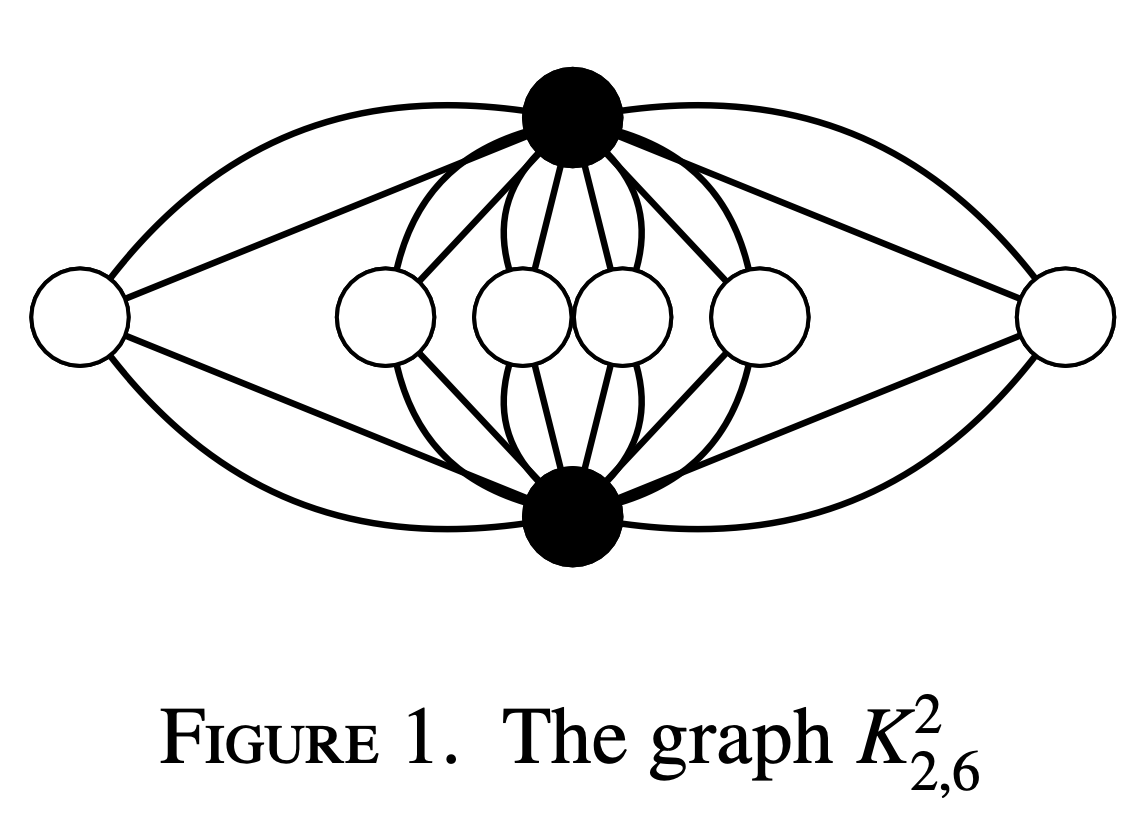}
	
	\end{center} 
 
\begin{coro}[Strong and pure symmetric genus of $G_{n}$]\label{strong} 
If $n\geq 2$, then 
\begin{enumerate}
\item $\sigma^{0}(G_{n})=n$ and, up homeomorphisms, the conformal action of $G_{n}$ is unique.     
\item $\sigma_p(G_{n})= \left\{\begin{array}{ll}
n, & \text{if $n\geq 2$ even}\\
3n, & \text{if $n\geq 3$ odd}
\end{array}
\right\}$ 
and, up homeomorphisms, the conformal action of $G_{n}$ is unique in each case.  
\end{enumerate}
\end{coro}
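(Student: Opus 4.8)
The plan is to derive both statements from Theorem \ref{pta} together with a Riemann--Hurwitz/counting analysis of the admissible signatures.

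For (1), the upper bound $\sigma^0(G_n)\le n$ is immediate: the surface $S_n$ has genus $n$ and carries the triangular action of $G_n$ with quotient signature $(0;2,4,4n)$. For the lower bound I would argue that any conformal action of $G_n$ on a genus-$g$ surface arises from a surface-kernel epimorphism $\theta:\Delta\to G_n$ from a Fuchsian group of signature $(h;m_1,\dots,m_r)$, and that by Riemann--Hurwitz $2g-2=8n\big(2h-2+\sum_i(1-1/m_i)\big)$, so $g<n$ would force reduced area $<(n-1)/(4n)$. The task is to rule this out. First I reduce to $h=0$, $r=3$: using the abelianization of $G_n$ (which by $yxy=x^{2n-1}$ is $C_2\times C_2$ for $n$ even and $C_4\times C_2$ for $n$ odd), any generating tuple must contain an element outside $\langle x^2\rangle$, hence of order at least $4$, and a short count shows every signature with $h\ge 1$ or $r\ge 4$ has reduced area at least $1/4$, giving $g>n$. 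For $h=0$, $r=3$ I would use the identity $x^ky=yx^{(2n-1)k}$ to show that a generating triple with product $1$ must consist of two reflections $yx^{a},yx^{b}$ and one power $x^{j}$, and that the generation condition forces coprimality making $x^{j}$ have order exactly $4n$; hence the only triangular signature is $(0;2,4,4n)$, realizing $g=n$.

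For the uniqueness in (1), all genus-$n$ actions then have signature $(0;2,4,4n)$, so I would count the surface-kernel epimorphisms of this signature. The analysis above parametrizes $\theta$ by an order-$2$ reflection (there are $2n$) and a generator $x^{j}$ of order $4n$ (there are $\phi(4n)$), the order-$4$ image being then determined; hence there are exactly $\phi(4n)\cdot 2n=|{\rm Aut}(G_n)|$ of them by Lemma \ref{pgg1}. Since ${\rm Aut}(G_n)$ acts freely by postcomposition on this set, there is a single orbit, so all such actions are topologically equivalent, i.e.\ unique up to homeomorphism.

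For (2), when $n$ is even the genus-$n$ triangular action is purely-non-free by Theorem \ref{pta}(b); combined with $\sigma^0(G_n)\le\sigma_p(G_n)$ and $\sigma^0(G_n)=n$ this gives $\sigma_p(G_n)=n$, realized by the unique minimal action. When $n$ is odd, Theorem \ref{pta}(c) shows the triangular action covers every conjugacy class except one of the two involution classes $[y],[yx^2]$ (these split for $n$ odd by Lemma \ref{pgg1}(5), whereas for $n$ even they fuse). A purely-non-free action requires every element to be conjugate to a power of some cone-point stabilizer generator, so using the conjugacy data of Lemma \ref{pgg1} I would determine the minimal class-covering signature: the element $x$ forces a cone point of order $4n$; the classes $[yx],[yx^3]$ are covered by a single order-$4$ reflection (since $\langle yx\rangle$ meets both); while $[y]$ and $[yx^2]$ each demand their own order-$2$ reflection, because a cyclic group carries a unique involution and $yx^{\rm even}$ is not a proper power. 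Riemann--Hurwitz applied to the resulting minimal signature then yields the genus $3n$ asserted in the statement, and uniqueness follows from the same free-orbit counting as in (1), now accounting for the symmetry of ${\rm Aut}(\Delta)$ permuting the two order-$2$ cone points.

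The main obstacle in both parts is the lower bound. For $\sigma^0$ it is the verification that no competing Fuchsian signature beats $(0;2,4,4n)$, which rests on combining the abelianization constraint with the explicit subgroup-generation computation coming from $yxy=x^{2n-1}$; this is where the case-work concentrates. For the odd pure genus it is the precise bookkeeping of which conjugacy classes each cyclic stabilizer covers, together with the parity constraint that the number of reflection-valued cone points be even: reconciling minimal coverage of the four reflection classes with this parity is the delicate point, and it is what pins down the minimal signature and hence the genus. The uniqueness statements then reduce to matching an epimorphism count against $|{\rm Aut}(G_n)|$, which is routine once the count is set up.
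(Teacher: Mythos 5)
Your overall strategy matches the paper's: both parts are derived from Theorem \ref{pta} together with a minimal-hyperbolic-area argument and a classification of surface-kernel epimorphisms. The genuine difference is that the paper simply cites \cite{May10} for the statements that $(0;2,4,4n)$ (respectively $(0;2,2,4,4n)$) realizes the smallest (respectively smallest non-triangular) reduced area for a conformal $G_{n}$-action, whereas you re-derive these facts from the group structure; and for uniqueness in (1) the paper normalizes an arbitrary epimorphism $\theta_{k,r}(\beta_1)=x^{k}y$, $\theta_{k,r}(\beta_2)=yx^{-(k+r)}$, $\theta_{k,r}(\beta_3)=x^{r}$ to $\theta_{0,1}$ by an explicit automorphism $\psi_{u,2v}$, while you count $2n\cdot\phi(4n)=|{\rm Aut}(G_{n})|$ epimorphisms and invoke freeness of the ${\rm Aut}(G_{n})$-action; these are equivalent. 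Your conjugacy-class-covering derivation of the signature $(0;2,2,4,4n)$ for the odd pure symmetric genus is sound and arguably more illuminating than the paper's appeal to \cite{May10}.

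Two points need repair. First, your reduction step is misstated: an element outside $\langle x^{2}\rangle$ need not have order at least $4$ (take $y$), so the claim as written does not exclude low-area signatures such as $(0;2,2,2,3)$, whose reduced area $1/6$ falls below $(n-1)/(4n)$ as soon as $n>3$. The correct statement is that any generating tuple must contain an element outside the index-two subgroup $D_{4n}=\langle x^{2},y\rangle$, and every such element ($x^{\rm odd}$ or $yx^{\rm odd}$) has order divisible by $4$; with that, every admissible signature carries a period divisible by $4$ and your area bookkeeping goes through. Second, the uniqueness for $n$ odd in part (2) is not ``the same free-orbit counting as in (1)'': the paper's own analysis shows that the surface-kernel epimorphisms for $(0;2,2,4,4n)$ fall into \emph{three} ${\rm Aut}(G_{n})\times{\rm Aut}(\Delta)$-classes ($\theta_1,\theta_2,\theta_3$ in the proof), of which only $\theta_1$ is purely-non-free. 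You must therefore first isolate the epimorphisms whose two order-two generators land in the distinct involution classes $[y]$ and $[yx^{2}]$ (equivalently, whose order-four generator maps into $\langle x\rangle$), and only then show that these form a single orbit; as written, your argument would wrongly suggest that all quadrilateral actions in genus $3n$ are topologically equivalent.
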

\begin{proof}
(1) 
By Theorem \ref{pta}, $G_{n}$ acts on a closed Riemann surface $S$ of genus $n$ with signature $(0;2,4, 4n)$. Moreover, as this produces the smallest possible hyperbolic area \cite{May10} for a conformal action of $G_{n}$, we obtain that $\sigma^{0}(G_{n})=n$.   
Let $\Gamma$ be a Fuchsian group with signature $(0;+; [2, 4, 4n];\{-\})$ and presentation 
$\Gamma=\langle \beta_1,\beta_2,\beta_3:\ \beta_1\beta_2\beta_3=\beta_1^2=\beta_2^4=\beta_3^{4n}=1\rangle$. 
Every possible epimorphism from $\Gamma$ to $G_{n}$, with torsion-free kernel, is of the form
$$\theta_{k,r}:\Gamma\to G_{n}: 
\theta_{k,r}(\beta_1)=x^{k}y,\ \theta_{k,r}(\beta_2)=yx^{-(k+r)},\ \theta_{k,r}(\beta_3)=x^{r},$$
where $k,r\in \{0,1,\ldots,4n-1\}$ with $k$ even and ${\rm gcd}(4n,r)=1$.
By post-composing $\theta_{k,r}$ by the automorphism $\psi_{u,2v}$ of $G_{n}$, where $ur$ is congruent to $1$ module $4n$ and $ku+2v$ is congruent to 0 module $4n$, we obtain $\theta_{0,1}$.    

(2) If  $n$ is even, then this is a consequence of Theorem \ref{pta} and part (1) above.  
Let us now assume $n$ odd. Theorem \ref{pta} asserts that if $G_{n}$ acts triangular way, then it is not purely-non-free on closed Riemann surfaces of genus $n$. So, is sufficient consider a quadrilateral Fuchsian group $\Gamma$ with signature $(0;+; [2, 2, 4, 4n];\{-\})$  (this provides the smallest non-triangular hyperbolic area for the conformal action of $G_{n}$ \cite{May10}). A presentation of $\Gamma$ is
$$\Gamma=\langle \beta_1,\beta_2,\beta_3, \beta_4:\ \beta_1\beta_2\beta_3\beta_4=\beta_1^2=\beta_2^2=\beta_3^4=\beta_4^{4n}=1\rangle.$$ 
An epimorphism $\theta:\Gamma \to G_{n}$, with torsion-free kernel $K$, is given by 
$\theta(\beta_1)=y,\ \theta(\beta_2)=yx^{2},\ \theta(\beta_3)=x^n,\ \theta(\beta_4)=x^{3n-2}$. If $S=\h/K$, 
considering the branched regular cover map $\pi:S\to \widehat{\mathbb{C}}$, with deck group $C_{4n}=\langle x\rangle$, as in the proof of the item (a) of Theorem \ref{pta}, we have that $yx$ (also $yx^3)$ must has fixed points. So every element of $G_{n}$ acts with fixed points \cite[Lemma 1]{BGH}. Since $S/G_{n}$ has signature $(0;2, 2, 4, 4n)$ follows from the Riemann-Hurwitz formula that $S$ has genus equal to $3n.$   

Following \cite{BRT}, the conformal action of $G_{n}$ on closed Riemann surfaces $S$ of genus $3n$ with signature $(0;2,2,4,4n)$ is not unique up to 
homeomorphisms. Let us consider a Fuchsian group $\Gamma$ with signature $(0;+;[2,2,4,4n];\{-\})$ and an epimorphism $\theta:\Gamma\to G_n$.  It must satisfy that $\theta(\beta_1),\theta(\beta_2)\in\{x^{2n},yx^s,\ \text{$s$-even}\}$; $\theta(\beta_3)\in \{x^n, x^{3n},yx^m,\ \text{$m$-odd}\}$; $\theta(\beta_4)\in \{x^i,\ {\rm gcd}(i, 4n)=1\}$.

By post-composing $\theta$ by automorphisms of $G_n$, we obtain that all of these possibilities for $\theta$ are ${\rm Aut}(G_n)/{\rm Aut}(\Gamma)$-equivalent to
$$\theta_1:\Delta\to G_n: \theta_1(\beta_1)=y;\ \theta_1(\beta_2)=yx^2;\ \theta_1(\beta_3)=x^n;\ \theta_1(\beta_4)=x^{3n-2},\ \text{or}$$
$$\theta_2:\Delta\to G_n: \theta_2(\beta_1)=y;\ \theta_2(\beta_2)=x^{2n};\ \theta_2(\beta_3)=yx^{2n-1};\ \theta_2(\beta_4)=x,\ \text{or}$$
$$\theta_3:\Delta\to G_n: \theta_3(\beta_1)=y;\ \theta_3(\beta_2)=yx^4;\ \theta_3(\beta_3)=x^n;\ \theta_3(\beta_4)=x^{3n-4}.\qquad $$

Thereby, if $\theta$ is equivalent to $\theta_1$ it is purely-non-free (\cite[Lemma 1]{BGH}), in otherwise the action $\theta$ is not purely-non-free.
\end{proof}
    
\subsection{Jacobian variety for the triangular action of $G_{n}$} 
A conformal action of a finite group $G$ on a Riemann surface $S$ of genus $g \geq  2$ induces a natural ${\mathbb Q}$-algebra homomorphism $\rho : {\mathbb Q}[G]\to {\rm End}_{\mathbb Q}(J_S)$, from the group algebra ${\mathbb Q}[G]$ into the endomorphism algebra of the Jacobian variety $J_S$. The factorization of ${\mathbb Q}[G]$ into a product of simple algebras yields a decomposition of $J_S$ into abelian subvarieties, called the {\it isotypical decomposition} \cite{LR}.
We proceed to describe the decomposition of the Jacobian variety for the triangular action of $G_{n}$ on closed Riemann surfaces.
    
\begin{theo}\label{VJ}
Let $S$ be the closed Riemann surface of genus $n \geq 2$ admitting the conformal action of $G_{n}$, presented as in (\ref{g1}), with signature $(0; 2,4, 4n)$. Then
\setlength{\leftmargini}{8mm}
\begin{enumerate}
\item[(a)] If $n=2^{\alpha}$ with $\alpha\geq 1$, then $J_S\sim J_{S/\langle y\rangle}^2.$

\item[(b)] If $p$ is the smallest prime divisor bigger than two of $n$ and $k$ is such that $n=pk$, then
  $J_S\sim J_{S/\langle y\rangle}^2\times J_{S/\langle x^{4k}\rangle}.$ 
\end{enumerate}   

Moreover, in each case, $J_S$ has complex multiplication.
\end{theo}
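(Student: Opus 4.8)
The plan is to realize the stated decomposition as the \emph{isotypical decomposition} associated to the $G_n$-action, following Lange--Recillas \cite{LR}: the conformal action induces $\rho:\mathbb{Q}[G_n]\to \mathrm{End}_{\mathbb{Q}}(J_S)$, and the factorization $\mathbb{Q}[G_n]\cong \prod_i M_{n_i}(D_i)$ into simple algebras yields an isogeny $J_S\sim \prod_i B_i^{n_i}$, where $D_i=\mathrm{End}_{G_n}(V_i)$ is the division algebra attached to the rational irreducible representation $V_i$. The first step is therefore purely group-theoretic: using the conjugacy-class and automorphism data of Lemma \ref{pgg1}, I would list the complex irreducible characters of $G_n$ (all of degree $1$ or $2$, since $G_n$ is metacyclic), organize them into Galois orbits to obtain the rational irreducibles $V_i$, and identify each $D_i$. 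For this group the $D_i$ should be cyclotomic fields $\mathbb{Q}(\zeta_{m_i})$ (I expect trivial Schur indices throughout, as the two-dimensional representations are induced from the abelian subgroup $\langle x\rangle$), and recording the pair $(\dim B_i, D_i)$ for each $i$ is what drives both the decomposition and the CM statement.

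Next I would compute the dimensions $\dim B_i$ via the Chevalley--Weil formula applied to the explicit triangular action. By Corollary \ref{strong} the action is, up to equivalence, the epimorphism $\theta_{0,1}$ with $\theta(\beta_1)=y$, $\theta(\beta_2)=yx^{-1}$, $\theta(\beta_3)=x$ from a Fuchsian group of signature $(0;+;[2,4,4n];\{-\})$. Feeding the local rotation numbers of the stabilizer generators $y$ (order $2$), $yx^{-1}$ (order $4$) and $x$ (order $4n$) into Chevalley--Weil gives the multiplicity of each complex irreducible in $H^{0}(S,\Omega_S)$, hence $\dim B_i$. I expect most $B_i$ to vanish, leaving only the factors coming from the faithful two-dimensional representations of $G_n$ and, when $n$ has an odd prime divisor, one extra family factoring through a cyclic quotient of $\langle x\rangle$.

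The third step is to match the surviving factors with the Jacobians of the intermediate quotients named in the statement. The key tool is the companion formula $J_{S/H}\sim \prod_i B_i^{\dim_{\mathbb{C}} V_i^{H}}$ for a subgroup $H\le G_n$. I would compute the invariant dimensions $\dim V_i^{\langle y\rangle}$ and $\dim V_i^{\langle x^{4k}\rangle}$, and cross-check the totals against the genera of $S/\langle y\rangle$ and $S/\langle x^{4k}\rangle$ obtained from Riemann--Hurwitz, using the fixed-point counts of $y$ and of the order-$p$ element $x^{4k}$ that follow from the conjugacy information already exploited in the proof of Theorem \ref{pta}. This should exhibit the groupings $J_{S/\langle y\rangle}^{2}$ and, in the non-$2$-power case, the additional $J_{S/\langle x^{4k}\rangle}$. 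The dichotomy between (a) and (b) is representation-theoretic: when $n=2^{\alpha}$ the only cyclotomic fields entering are $2$-power cyclotomic and the relevant rational irreducibles all pull back from $S/\langle y\rangle$; when $n=pk$ with $p$ the least odd prime divisor, a further rational irreducible whose field $D_i$ involves $\mathbb{Q}(\zeta_{p})$ appears, and this is exactly the one supported on the $C_p=\langle x^{4k}\rangle$-quotient, producing the extra factor.

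Finally, for the complex multiplication assertion I would argue factor by factor: each nonzero $B_i$ satisfies $D_i=\mathbb{Q}(\zeta_{m_i})\subseteq \mathrm{End}^{0}(B_i)$, and I would verify from the dimension count above that $[\mathbb{Q}(\zeta_{m_i}):\mathbb{Q}]=2\dim B_i$; a cyclotomic (hence CM) field acting with this degree forces $B_i$ to be of CM type, and an isogeny product of CM abelian varieties is again of CM type. I expect the main obstacle to lie in the bookkeeping of the middle two steps: correctly determining the rational irreducibles together with their fields $D_i$, pinning down the rotation numbers so that Chevalley--Weil yields the right vanishing/non-vanishing pattern, and---most delicately---matching the abstract multiplicities $\dim V_i^{H}$ to the \emph{geometric} quotient Jacobians so that the grouping into $J_{S/\langle y\rangle}^2$ and $J_{S/\langle x^{4k}\rangle}$ is an honest isogeny (with dimensions genuinely adding to $n$) rather than a numerical coincidence. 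Once the cyclotomic fields $D_i$ and the dimensions are in hand, the CM step reduces to a routine degree check.
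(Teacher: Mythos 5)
Your route is genuinely different from the paper's. The paper never touches the group algebra: it applies the Kani--Rosen idempotent relations to two small subgroups --- $H=\langle x^{2n},y\rangle\cong C_2\times C_2$ partitioned into its three cyclic subgroups for case (a), and $\langle x^{4k},y\rangle\cong D_{2p}$ partitioned into $\langle x^{4k}\rangle$ and the $p$ reflection subgroups for case (b) --- then uses the conjugacy data of Lemma \ref{pgg1} to identify the reflection quotients as mutually isogenous, cancels via Poincar\'e reducibility, and kills the unwanted factors by observing that $S/\langle x^{2n}\rangle$ and $S/D_{2p}$ have genus zero (the first because $x^{2n}$ is the hyperelliptic involution, the second by Rojas's results on triangular $D_{2p}$-actions); the CM assertion is then outsourced entirely to Rohde's theorem on cyclic coverings of $\mathbb{P}^1$ branched at three points. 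Your Lange--Recillas/Chevalley--Weil plan would, if the bookkeeping is carried through, prove the same isogenies and more (the full isotypical decomposition with the fields $D_i$), and it makes the CM statement internal rather than a citation; the price is substantially more computation, and the paper's argument is shorter precisely because it never needs to know the irreducible representations. One caveat on your plan: the claim that the Schur indices are trivial \emph{because} the two-dimensional irreducibles are induced from the abelian subgroup $\langle x\rangle$ is not a valid inference --- the quaternion group's two-dimensional representation is also induced from a cyclic subgroup yet has Schur index $2$, and $G_n$ contains the dicyclic group $DC_{4n}$ as an index-two subgroup, so this point genuinely needs the Frobenius--Schur/Schur-index computation for $G_n$ itself rather than a general principle. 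That verification (together with the matching of the abstract multiplicities $\dim V_i^{H}$ to the geometric quotients, which you correctly flag) is where the real work of your approach lies.
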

\begin{proof}
\setlength{\leftmargini}{7mm}
\begin{enumerate}
\item[(\bf a)] By Kani-Rosen's Theorem \cite{KR}, applied on the subgroup $H=\langle x^{2n}, y\rangle\cong C_{2}\times C_{2}$ of $G_{n}$ and partition
  $H=\langle x^{2n}\rangle \cup \langle y \rangle \cup \langle yx^{2n} \rangle,$
the following isogeny relation holds
\begin{equation}\label{J2} 
J_{S}^{2}\times J^{4}_{S/ H} \sim J_{S/ \langle x^{2n} \rangle}^{2}\times J_{S/ \langle y \rangle}^{2}\times J_{S/ \langle yx^{2n} \rangle}^{2}.
\end{equation} 
    
 As $y$ and $yx^{2n}$ are in the same conjugacy class (Lemma \ref{pgg1}),  $J_{S/ \langle y \rangle}^{2}$ and $J_{S/ \langle yx^{2n} \rangle}^{2}$ are isogenous \cite[Proposition 2]{Pa}, so (\ref{J2}) asserts 
 \begin{equation}\label{J3}   
J_{S}^{2}\times J^{4}_{S/ H} \sim J_{S/ \langle x^{2n} \rangle}^{2}\times J_{S/ \langle y \rangle}^{4}.
 \end{equation}
 Applying Poincar\'e's complete reducibility theorem in (\ref{J3}), we obtain
 $$J_{S}\times J^{2}_{S/ H} \sim J_{S/ \langle x^{2n} \rangle}\times J_{S/ \langle y \rangle}^{2}.$$
    
 As the center of $G_{n}$ is $Z(G_{n})=\langle x^{2n}\rangle$ and $S$ is hyperelliptic, the genus of $S/ H$ and of $S/ \langle x^{2n} \rangle$ are zero.  Thereby, $J_{S} \sim J_{S/ \langle y \rangle}^{2}.$ 
    
 \item[(\bf b)] By applying Kani-Rosen's theorem on the subgroup $$H=\langle r, s \rangle=\langle x^{4k}, y : (x^{4k})^{p}=y^{2}=1, yx^{4k}y=x^{-4k} \rangle\cong D_{2p}$$ of $G_{n}$, with partition
 $H=\langle r \rangle \cup \langle s \rangle \cup \langle sr \rangle  \cup \cdots \cup \langle sr^{p-1}\rangle,$
  we obtain the isogeny relation
 \begin{equation}\label{j2} 
 J_{S}^{p}\times J^{2p}_{S/ D_{2p}} \sim J_{S/\langle r \rangle }^{p}\times J_{S/ \langle s \rangle }^{2}\times J_{S/ \langle sr \rangle }^{2} \times \cdots \times J_{S/ \langle sr^{p-1} \rangle }^{2}.
 \end{equation}
    
As the elements $s, sr, sr^{2}, \cdots, sr^{p-1}$ belong to the same conjugacy class (Lemma \ref{pgg1}),  $J_{S/ \langle s \rangle },  J_{S/ \langle sr \rangle }, \cdots, J_{S/ \langle sr^{p-1} \rangle } $ are isogenous \cite[Proposition 2]{Pa}. Then, from (\ref{j2}), we conclude that
\begin{equation}\label{j3} 
J_{S}^{p}\times J^{2p}_{S/  D_{2p}} \sim J_{S/  \langle r \rangle}^{p}\times J_{S/  \langle s \rangle}^{2p}.
\end{equation} 
     
Applying Poincar\'e's complete reducibility theorem in (\ref{j3}), permits to obtain 
$$J_{S}\times J^{2}_{S/  \langle x^{4k},y \rangle} \sim J_{S/  \langle x^{4k} \rangle}\times J_{S/  \langle y \rangle}^{2}.$$
 
 As $G_{n}$ acts in a triangular way on $S$, by results in \cite{AR} for the group $D_{2p}$, we obtain that the genus of $S/D_{2p}$ is zero. Therefore, $J_{S}\sim J_{S/  \langle x^{4k} \rangle}\times J_{S/  \langle y \rangle}^{2}.$ 
 \end{enumerate}

Finally, by \cite[Thm. 2.4.4]{Ro}, in either case (a) and (b), the Jacobian variety $J_S$ has complex multiplication. 
\end{proof}

\begin{remark}
In case (b) of the above theorem, the subgroup $\langle x\rangle$ acts on $S$ with signature $(0; 2, 4n, 4n)$.  Then, by Theorem 2.7 in \cite{ABCNPW}, for the normal subgroup $\langle x^{4k}\rangle$ of  $\langle x \rangle$ we have that the genus of the quotient orbifold $S/\langle x^{4k}\rangle$ is 1 if $n$ odd, and 2 if $n$ even. As the genus of $S$ is $n$, we conclude that the genus of the quotient orbifold $S/ \langle y \rangle$ is $(n-1)/2$ if $n$ odd, and $(n-2)/2$ if $n$ even (by dimensions arguments).
\end{remark}

\section{The symmetric hyperbolic genus of $G_{n}$}
The generalized quasi-dihedral group $G_{n}$, for $n \geq 2$, contains index two subgroups. If $H$ is any of the these subgroups, then there are closed Riemann surfaces $S$, of genus at least two, for which $G_{n} \leq {\rm Aut}(S)$ and  $H=G_{n} \cap {\rm Aut}^{+}(S)$ \cite{Gre}, we denote by  $\sigma^{hyp}(G_{n},H) \geq 2$ the smallest genus of these closed Riemann surfaces.

\begin{theo}\label{hyp1}
If $n \geq 2$, then 
\begin{enumerate} 
 \item $\sigma^{hyp}(G_{n}, D_{4n})=2n+1$.
\item $\sigma^{hyp}(G_{n}, DC_{4n})=\begin{cases}
n,&\text{ if $n\geq 2$ even},  \\
n-1, &\text{ if $n\geq 3$ odd}. 
 \end{cases}  $
 \item $\sigma^{hyp}(G_{n}, C_{4n})=2n-1$.
 \end{enumerate}
\end{theo}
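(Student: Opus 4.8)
The plan is to recast the statement in terms of NEC groups and then minimise hyperbolic area. Realising $G_n$ as the \emph{full} group $\mathrm{Aut}(S)$ of a closed surface $S$ of genus $g$ with $\mathrm{Aut}^+(S)=H$ amounts to producing a \emph{maximal} NEC group $\Delta$ together with an epimorphism $\theta\colon\Delta\to G_n$ whose kernel $K$ is a torsion-free surface Fuchsian group and which sends $\Delta^+$ onto $H$; equivalently, the composite of $\theta$ with the projection $G_n\to G_n/H\cong C_2$ must be the sign homomorphism of $\Delta$. The Riemann--Hurwitz formula then gives $2g-2=8n\,|\Delta|^*$, so $g=4n|\Delta|^*+1$ and the task is to minimise the reduced area $|\Delta|^*$ over admissible $\theta$. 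The first thing I would record is the relevant arithmetic of $G_n$ from Lemma \ref{pgg1}: its involutions are $x^{2n}$ and the elements $yx^{i}$ with $i$ even, and $(yx^{2a})(yx^{2b})=x^{2(b-a)}\in\langle x^2\rangle$, so products of reflection generators always land in the rotation subgroup $\langle x^2\rangle$.

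The decisive discriminator is whether $G_n\setminus H$ contains involutions, because every period cycle of $\Delta$ forces its reflection generators into order-two elements of $G_n\setminus H$. For $H=D_{4n}=\langle x^2,y\rangle$ \emph{all} involutions of $G_n$ already lie in $H$, so $\Delta$ can carry no reflections and must be non-orientable, of signature $(h;-;[m_1,\dots,m_r];\{-\})$ and area $h-2+\sum_i(1-1/m_i)$, the glide reflections carrying the anticonformal information. Here the lower bound needs no maximality: an area count shows that the only admissible signatures of area below $1/2$ have $h=1$ and $r=2$, and these are excluded because $\Delta^+=\langle\beta_1,\beta_2\rangle$ would have to generate the dihedral group $D_{4n}$ while $\theta(d_1)^2=(\theta(\beta_1)\theta(\beta_2))^{-1}$ is a square of an element of $G_n\setminus D_{4n}$ and hence lies in $\langle x^2\rangle$; but a product of two generators of a dihedral group cannot be a nontrivial rotation unless both are reflections, forcing $m_1=m_2=2$ and area $0$. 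Thus $|\Delta|^*\ge 1/2$, and for the upper bound I would check that $(1;-;[2,2,2];\{-\})$, with $\theta(\beta_1)=x^{2n}$, $\theta(\beta_2)=y$, $\theta(\beta_3)=yx^{2n-2}$, $\theta(d_1)=x$, is a valid (and maximal) realisation of area $1/2$, giving $g=2n+1$.

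For $H=C_{4n}$ and $H=DC_{4n}$ the set $G_n\setminus H$ does contain the involutions $yx^{\mathrm{even}}$, so period cycles are allowed; since reflection products only generate $\langle x^2\rangle$, the boundary corners have orders dividing $2n$, while the ``odd'' part of $H$ must be produced by an interior elliptic (or boundary) generator. For $DC_{4n}$ I would take an order-four elliptic generator $\theta(\beta_1)=yx$ together with a single period cycle; a direct computation of $\theta(c_0)\theta(c_1)=x^{2(1-n-2s)}$ shows that the attainable period is $2n$ for $n$ even and $n$ for $n$ odd, yielding $(0;+;[4];\{(2n)\})$ of area $(n-1)/(4n)$ and $(0;+;[4];\{(n)\})$ of area $(n-2)/(4n)$, hence $g=n$ and $g=n-1$ respectively. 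For $C_{4n}$ one instead needs an order-$4n$ elliptic generator to capture the odd rotations, and the same parity phenomenon constrains the admissible period cycle; assembling this into a maximal signature of reduced area $(n-1)/(2n)$ gives $g=2n-1$. In each case one verifies surjectivity, the torsion-free condition on the elliptic and dihedral generators, and the sign condition $\theta(\Delta^+)=H$.

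The main obstacle — and the real content of the theorem — is the maximality analysis, which is also what separates the three cases numerically. Admissible signatures of strictly smaller area \emph{do} exist for $C_{4n}$ and $DC_{4n}$; for instance the forced single-period signatures have triangle-type canonical Fuchsian subgroups $\Delta^+\cong\Delta(s,s,t)$, which are non-maximal by Singerman's list since $\Delta(s,s,t)\lhd\Delta(2,s,2t)$. Whether such a signature actually realises $G_n$ as the full group comes down to whether the extending involution, which swaps the two order-$s$ generators, does so through an automorphism of $H$: if it does, $K$ remains normal in the larger group and $\mathrm{Aut}(S)\supsetneq G_n$, so the signature must be discarded. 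For $H=C_{4n}$ the automorphism group is large — every $x\mapsto x^{u}$ with $\gcd(u,4n)=1$ is an automorphism — so these sub-threshold signatures are systematically non-maximal, which is exactly what pushes the minimal genus up to $2n-1$; for $H=DC_{4n}$ (and $H=D_{4n}$) the rigidity of the group blocks the relevant automorphism, so the small signatures survive as maximal and the minimal genus falls to $n$/$n-1$ (resp. stays at $2n+1$). Carrying this out carefully — discarding every sub-threshold signature and verifying that the extremal ones are maximal NEC groups via the classification of normal NEC inclusions — is the crux; the three stated values then follow from $g=4n|\Delta|^*+1$.
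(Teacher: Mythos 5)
Your overall framework --- translating the problem into the existence of an epimorphism $\theta\colon\Delta\to G_n$ from an NEC group with torsion-free kernel and $\theta(\Delta^{+})=H$, and then minimising the reduced area --- is exactly the paper's framework, and your parts (1) and (2) essentially reproduce its proof: the observation that every involution of $G_n$ lies in $D_{4n}$, so that for $H=D_{4n}$ the group $\Delta$ carries no reflections and must be non-orientable, is the paper's Lemma \ref{lgg1}; your extremal epimorphisms for $(1;-;[2,2,2];\{-\})$ and for $(0;+;[4];\{(2n)\})$, $(0;+;[4];\{(n)\})$ check out and match the paper's up to composition with automorphisms; and your parity computation of the order of $\theta(c_{10})\theta(c_{11})$ is precisely how the $n$ even / $n$ odd dichotomy of part (2) arises.

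The genuine gap is in what you call the crux. You have silently replaced the defining condition of $\sigma^{hyp}(G_n,H)$ --- namely $G_n\leq{\rm Aut}(S)$ together with $H=G_n\cap{\rm Aut}^{+}(S)$ --- by the stronger requirement ${\rm Aut}(S)=G_n$. Under the paper's definition a candidate signature may only be discarded by showing that no surjection with torsion-free kernel and $\theta(\Delta^{+})=H$ exists; it cannot be discarded because $\Delta$ fails to be a maximal NEC group, since on a surface where the action extends one still has $G_n\leq{\rm Aut}(S)$ and $H=G_n\cap{\rm Aut}^{+}(S)$ (the elements of $G_n\setminus H$ remain anticonformal). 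Accordingly, the paper performs no maximality analysis at all: every exclusion in its proof is an explicit nonexistence-of-epimorphism argument. Your Singerman-extension discussion is therefore answering a different question, and the lower bounds you obtain from it are not lower bounds for the quantity in the statement.

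This is not merely a stylistic objection, because it is load-bearing in part (3). For $H=C_{4n}$ the same computation you carry out for $DC_{4n}$ forces $\theta(c_{10})\theta(c_{11})=x^{\pm(2n-2)i}$ with $\gcd(i,4n)=1$, an element of order $2n$ when $n$ is even but of order $n$ when $n$ is odd. For $n$ odd the signature $(0;+;[4n];\{(n)\})$ thus does admit a surjection onto $G_n$ with torsion-free kernel and $\theta(\Delta^{+})=C_{4n}$ (surjectivity follows from $\langle x^{i},yx^{s}\rangle=G_n$), and Riemann--Hurwitz gives genus $2n-2<2n-1$. Your only mechanism for ruling this out is non-maximality, which is not available under the paper's definition, so your argument does not establish the stated value of $\sigma^{hyp}(G_n,C_{4n})$ for $n$ odd. (It also points at a step the paper itself passes over quickly: its proof takes $t=2n$ from the congruence $tr\equiv 0 \bmod 4n$ without verifying that $2n$ is the exact order of $\theta(c_{10})\theta(c_{11})$ when $n$ is odd.) By contrast, in the $DC_{4n}$ case the period is pinned down by surjectivity alone, which is why your conclusion there agrees with the paper's.
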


The symmetric hyperbolic genus of $G_{n}$ is given by  $\sigma^{hyp}(G_{n})={\rm min}\{\sigma^{hyp}(G_{n},H):\ \text{$H<G_{n}$ and $[G_{n}:H]=2$}\}$. The above provides the following.

\begin{theo}\label{hyp}
If $n \geq 2$, then 
$$\sigma^{hyp}(G_{n})=\begin{cases}
n,&\text{ if $n\geq 2$ even},  \\
n-1, &\text{ if $n\geq 3$ odd}. 
 \end{cases} 
$$
\end{theo}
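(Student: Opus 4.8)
The plan is to read off $\sigma^{hyp}(G_n)$ directly from Theorem \ref{hyp1} via the defining minimisation, so essentially no new geometry is required. First I would invoke Lemma \ref{pgg1}(2), which tells us that $G_n$ possesses exactly three subgroups of index two, namely $C_{4n}=\langle x\rangle$, $D_{4n}=\langle x^2,y\rangle$ and $DC_{4n}=\langle x^2,yx\rangle$. By the definition of the symmetric hyperbolic genus recalled just above the statement, $\sigma^{hyp}(G_n)$ is the minimum of $\sigma^{hyp}(G_n,H)$ as $H$ ranges over these three subgroups, so the quantity is completely determined once the three summands are known.

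Next I would substitute the values furnished by Theorem \ref{hyp1}, namely $\sigma^{hyp}(G_n,D_{4n})=2n+1$, $\sigma^{hyp}(G_n,C_{4n})=2n-1$, and $\sigma^{hyp}(G_n,DC_{4n})$ equal to $n$ for $n$ even and $n-1$ for $n$ odd, and then minimise. For $n\geq 2$ even the three candidates are $2n+1$, $2n-1$ and $n$; since $n<2n-1$ holds precisely when $n>1$ and $n<2n+1$ always, the minimum is $n$. For $n\geq 3$ odd the candidates are $2n+1$, $2n-1$ and $n-1$, and as $n-1$ is strictly smaller than both $2n-1$ and $2n+1$ for every $n\geq 1$, the minimum is $n-1$. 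In either parity the minimiser is the dicyclic subgroup $DC_{4n}$, which is really the only qualitative observation in the argument.

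Since all the genuine content --- the construction of the relevant NEC/Fuchsian groups, the epimorphisms onto $G_n$ with torsion-free kernel, and the accompanying Riemann--Hurwitz computations that pin down the three individual values --- is already carried out in Theorem \ref{hyp1}, I do not expect any real obstacle here: what remains is the elementary comparison of three linear functions of $n$. The only point I would verify carefully is that the list in Lemma \ref{pgg1}(2) is exhaustive, so that the minimum taken over those three subgroups genuinely coincides with the minimum over \emph{all} index-two subgroups entering the definition of $\sigma^{hyp}(G_n)$.
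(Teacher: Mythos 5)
Your proposal is correct and matches the paper exactly: the paper also obtains Theorem \ref{hyp} as an immediate consequence of Theorem \ref{hyp1}, minimising $\sigma^{hyp}(G_n,H)$ over the three index-two subgroups listed in Lemma \ref{pgg1}(2), with the minimum attained at $DC_{4n}$ in both parities. Your extra check that the list of index-two subgroups is exhaustive is the right (and only) point needing care, and it is supplied by that lemma.
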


The Theorem \ref{hyp}, together \cite[Theorem 2.4]{H.S}, permits to obtain the following.

\begin{coro}\label{shg}
Every integer $g\geq 2$ is the symmetric hyperbolic genus of some finite group.
\end{coro}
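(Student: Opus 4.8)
The plan is to combine the even genera produced by the family $\{G_n\}_{n\ge 2}$ with an auxiliary family supplying the odd genera. By Theorem \ref{hyp}, for $n\ge 2$ even one has $\sigma^{hyp}(G_n)=n$, while for $n\ge 3$ odd one has $\sigma^{hyp}(G_n)=n-1$; in \emph{both} regimes the resulting value is even. The first step is simply to record that, as $n$ runs over the even integers $\ge 2$, the numbers $\sigma^{hyp}(G_n)=n$ run through exactly all even integers $\ge 2$ (the odd-$n$ case then only reconfirms these same even values). Consequently every even $g\ge 2$ is already realized as a symmetric hyperbolic genus by some generalized quasi-dihedral group.

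It therefore remains to realize the odd integers $g\ge 3$, and for this I would invoke \cite[Theorem 2.4]{H.S}, which furnishes, for each integer in the complementary (odd) range, a finite group whose symmetric hyperbolic genus equals that integer. Concatenating the two lists — the even values coming from Theorem \ref{hyp} and the odd values coming from \cite[Theorem 2.4]{H.S} — produces, for every integer $g\ge 2$, a finite group of symmetric hyperbolic genus exactly $g$, which is precisely the assertion of the corollary.

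The genuine obstacle is not the counting but the structural fact that the groups $G_n$ alone can never reach an odd value: since $\sigma^{hyp}(G_n)$ is even for every admissible $n$, no choice of $n$ populates the odd residue class, so the entire force of the statement for odd $g$ must rest on the cited external result. Thus the only point requiring care is the bookkeeping — verifying that the two sources overlap consistently on the even part and together leave no gap in $\{2,3,4,\dots\}$. Once this is checked, no further argument is needed, and the corollary follows immediately from Theorem \ref{hyp} and \cite[Theorem 2.4]{H.S}.
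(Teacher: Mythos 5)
Your proposal is correct and follows exactly the route the paper intends: the paper offers no written proof beyond the remark that Theorem \ref{hyp} together with \cite[Theorem 2.4]{H.S} yields the corollary, and your observation that $\sigma^{hyp}(G_n)$ is even in both regimes (covering all even $g\geq 2$ via $n=g$ even) while the dicyclic-group result of \cite{H.S} supplies the odd values is precisely that argument, spelled out. No gap.
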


\subsection{Proof of Theorem \ref{hyp1}}
Let $S$ be a closed Riemann surface of genus at least two such that $G_{n} \leq {\rm Aut}(S)$ and $G_{n}^{+}=G_{n} \cap {\rm Aut}^{+}(S)$ is any one of index two subgroups, i.e.,  $G_{n}^{+}\in \lbrace C_{4n}, D_{4n}, DC_{4n} \rbrace$. As the orders of the cyclic subgroups of $G_{n}$ are divisors of $4n$, $4$ and $2$, the orders of the conical points (if any) of the quotient orbifold $\mathcal{O}=S/G_{n}^{+}$, are also of that form.  The group $G_{n}$ induces an anticonformal involution $\tau$, on the quotient orbifold $\mathcal{O}$, so that $\mathcal{O}/ \langle \tau \rangle=S/G_{n}$. Such an involution permutes the cone points preserving the orders.

\begin{lemma}\label{lgg1}
If $G_{n}^{+}=D_{4n}$, then $\tau$ acts without fixed points.  In the other cases, $\tau$ may acts with fixed points.
\end{lemma}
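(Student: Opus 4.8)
The plan is to reduce the statement to a purely group-theoretic count of involutions in the non-trivial coset $G_{n}\setminus G_{n}^{+}$, using two preliminary observations about anticonformal automorphisms. First I would pin down exactly when $\tau$ fixes a point. Choose any $g\in G_{n}\setminus G_{n}^{+}$, so that $\tau$ is the involution of $\mathcal{O}=S/G_{n}^{+}$ induced by $g$ (it is an involution because $[G_{n}:G_{n}^{+}]=2$ forces $g^{2}\in G_{n}^{+}$, whence $\tau^{2}=\mathrm{id}$, and it is anticonformal because $g$ is). For $p\in S$ with image $[p]\in\mathcal{O}$ one has $\tau([p])=[gp]$, so $\tau([p])=[p]$ if and only if $gp$ and $p$ lie in the same $G_{n}^{+}$-orbit, i.e. if and only if some element of the coset $G_{n}^{+}g=G_{n}\setminus G_{n}^{+}$ fixes $p$. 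Thus $\tau$ has a fixed point on $\mathcal{O}$ precisely when some element of $G_{n}\setminus G_{n}^{+}$ has a fixed point on $S$.

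The key analytic input, which I would establish next, is that \emph{every anticonformal automorphism of $S$ possessing a fixed point is an involution}. Each element of $G_{n}\setminus G_{n}^{+}$ is anticonformal and, since $S$ has genus $\geq 2$, is an isometry of the hyperbolic metric. If such a $g'$ fixes $p$, then in a local conformal chart centred at $p$ it has the form $z\mapsto \lambda\bar{z}+O(\bar{z}^{2})$, and the isometry condition forces $|\lambda|=1$; consequently $g'^{2}$ is a \emph{conformal} automorphism fixing $p$ with derivative $|\lambda|^{2}=1$, so $g'^{2}=\mathrm{id}$ by rigidity of finite-order conformal automorphisms in genus $\geq 2$. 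Hence only order-two elements of $G_{n}\setminus G_{n}^{+}$ can produce fixed points of $\tau$, and when they do, their one-dimensional fixed locus on $S$ descends to a fixed locus of $\tau$, so ``fixed point'' genuinely means ``reflection''.

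It then remains to list the involutions in each coset using Lemma \ref{pgg1}. For $G_{n}^{+}=D_{4n}=\langle x^{2},y\rangle$ the complement is $\{x^{i}:i\text{ odd}\}\cup\{yx^{i}:i\text{ odd}\}$; here $|x^{i}|=4n/\gcd(i,4n)$ is divisible by $4$ whenever $i$ is odd (the factor $4$ survives, since $\gcd(i,4n)$ is then odd), and $|yx^{i}|=4$ for $i$ odd. Therefore this coset contains \emph{no} involution, and by the first two steps $\tau$ is fixed-point free in every such action. By contrast, for $G_{n}^{+}=C_{4n}=\langle x\rangle$ the complement $\{yx^{i}\}$ contains the involutions $yx^{i}$ with $i$ even, and for $G_{n}^{+}=DC_{4n}=\langle x^{2},yx\rangle$ the complement $\{x^{i}:i\text{ odd}\}\cup\{yx^{i}:i\text{ even}\}$ again contains the involutions $yx^{i}$ with $i$ even; in these two cases the obstruction disappears, so $\tau$ may indeed act with fixed points.

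I expect the only genuine difficulty to be the analytic lemma of the second paragraph—the assertion that an orientation-reversing isometry fixing a point behaves locally as a reflection and hence squares to the identity—together with the verification that its fixed arc on $S$ projects to a fixed arc of $\tau$ on $\mathcal{O}$. Everything after that is a finite check of element orders supplied directly by Lemma \ref{pgg1}.
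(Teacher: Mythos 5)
Your proposal is correct and follows essentially the same route as the paper: both reduce the lemma to the observation that a fixed point of $\tau$ forces an involution in the coset $G_{n}\setminus G_{n}^{+}$, and then use Lemma \ref{pgg1} to check that all $2n+1$ involutions of $G_{n}$ lie in $D_{4n}$ while the other two index-two subgroups miss the involutions $yx^{i}$ with $i$ even. The only (minor) difference is how the ``lifting must be an involution'' step is justified: the paper notes that the fixed-point set of $\tau$ would contain an oval, so some lifting has infinitely many fixed points and hence order two, whereas you linearize an anticonformal automorphism at a single fixed point; both arguments are standard and valid.
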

\begin{proof}
If $\tau$ has fixed points, then it must have at least a simple loop (an oval) consisting of fixed points. This means that we may find a lifting of $\tau$ in $G_{n}\setminus G_{n}^{+}$ having infinitely many fixed points. The only possibility for such a lifting is to have order two, so the possibilities are $y,x^{2n},yx^{i}$ with $i$ even. In the case, $G_{n}^{+}= D_{4n}$, this is not possible since all involutions are conformal. However, if $G_{n}^{+}=C_{4n}$ or $G_{n}^{+}=DC_{4n}$,  we may consider $y, yx^{i}$ (with $i$ even), as anticonformal involutions. 
\end{proof}

\noindent{\bf Case} $G_{n}^{+}=D_{4n}$. In this case, by Lemma \ref{lgg1} we have that the number of conical points (if any) of $\mathcal{O}$ is even, say $2r$, and they are permuted in pairs by the involution $\tau$. This, in particular, asserts that $\mathcal{O}$ has signature of the form  $(h ; m_{1}, m_{1}, \ldots, m_{r}, m_{r})$ (see, \cite[Corollary 2.2.5]{BuEtaGamGro}), where $m_{j}\geqslant 2$, and $S/G_{n} = \mathcal{O}/ \langle \tau \rangle$ is a closed hyperbolic non-orientable surface, say a connected sum of $h + 1$  real projective planes and having $r$ cone points of orders $m_{1}, \ldots, m_{r}$. This means that there is an NEC group with presentation  $$\Delta=\langle d_{1}, \cdots , d_{h +1}, \beta_1,\cdots, \beta_r:\ \beta_1^{m_{1}}=\cdots=\beta_r^{m_{r}}=1,\beta_1\cdots \beta_rd_{1}^2 \cdots d_{h+1}^2=1\rangle,$$ where $d_{j}$ is a glide-reflection and $\beta_j$ is an elliptic transformation, and there is a surjective homomorphism $\theta:\Delta\to G_{n}$ such that $ \theta(\beta_j)\in G_{n}^{+}$ and $ \theta(d_j)\in G_{n}\setminus G_{n}^{+}$, with torsion-free kernel $\Gamma$ and $S = \mathbb{H}/\Gamma$. 
 By \cite{Bu03, May10}, the minimal possible genus $g\geq 2$ for $S$ is when $\mathcal{O}$ has signatures
$$i)\ (0;2,2,2n,2n),\qquad ii)\ (1;2,2),\qquad iii)\ (0;2,2,2,2,2,2).$$

In case $i)$, $\Delta$ has signature $(1;-;[2, 2n];\{-\})$ and presentation $\Delta=\langle d_1,\beta_1, \beta_2:\ \beta_1^2=\beta_2^{2n}=1, \beta_1\beta_2d_1^2=1\rangle$.
In case $ii)$, $\Delta$ has signature $(2;-;[2];\{-\})$ and presentation $\Delta=\langle d_1, d_2,\beta_1:\ \beta_1^2=1, \beta_1d_1^2d_2^2=1\rangle$. These signatures are not admissible for the action of $G_{n}$ as there exist no epimorphisms $\theta:\Delta\to G_{n}$. 
 
In case $iii)$, $\Delta$ has signature $(1;-;[2,2,2];\{-\})$ and presentation $\Delta=\langle d_{1}, \beta_1, \beta_2, \beta_3 :\ \beta_1^2=\beta_2^2= \beta_3^2=1, \beta_1\beta_2\beta_3d_{1}^2=1\rangle$. An epimorphism $\theta:\Delta\to G_{n}$, with torsion free kernel, is given by $ \theta(d_{1})=yx;\ \theta(\beta_i)=y, 1\leq i\leq 2;\ \theta(\beta_3)=x^{2n}.$ The Riemann-Hurwitz formula asserts that $S$ has genus $2n+1$.

\medskip
\noindent{\bf Case} $G_{n}^{+} \in \{C_{4n}, DC_{4n}\}$. By Lemma \ref{lgg1}, the number of conical points (if any) of $\mathcal{O}$ could be odd, say $2r+t$, where the $2r$ points are permuted in pairs by the involution $\tau$, and the $t$ points are associated to the fixed points of  $\tau$ (the ovals). This, asserts that $\mathcal{O}$ has signature of the form \cite[Corollary 2.2.5]{BuEtaGamGro}: $$(h ; m_{1}, m_{1}, \ldots, m_{r}, m_{r}, n_{11},\ldots, n_{1s_{1}},\cdots, n_{k 1},\ldots, n_{k s_{k}}),$$ where $m_{j}, n_{l s_{l}}\geqslant 2$, $s_1+\cdots+s_k=t$, and $S/G_{n}= \mathcal{O}/ \langle \tau \rangle$ is a bordered surface. By \cite{May10}, a minimal genus $g\geq 2$ for $S$ is obtained when $\mathcal{O}$ has triangular signature $(0;m_1,m_2, m_3)$. So, there is a Fuchsian group $\Gamma$ with signature $(0;+; [m_{1}, m_{2}, m_{3}];\{-\})$ as the canonical Fuchsian group of $\Delta$.
So, there are three possible signatures to consider for $\Delta$ \cite{Bu0}:
$$i)\ (0;+; [-];\lbrace ( m_{1}, m_{2}, m_{3})\rbrace),\quad ii)\ (0;+; [-];\lbrace ( m_{1}, m_{1}, m_{3})\rbrace),\quad iii)\ (0;+; [m_{1}];\lbrace ( m_{2})\rbrace).$$

\medskip
\noindent{\bf Cases} $i$) {and} $ii)$. In these cases, $\Delta$ has the following presentation
$$\Delta=\langle c_{10}, c_{1 1}, c_{1 2}, c_{1 3}, e_{1}:\ c_{1j}^{2}=(c_{1 j-1}c_{1j})^{m_{j}}=1, e_{1}^{-1}c_{1 0}e_{1}c_{1 3}= e_{1}=1\rangle.$$
There exist no an epimorphism $\theta: \Delta \to G_{n}$. In fact, if  such an epimorphism $\theta$ exists, then it has to preserve the relations of $\Delta$, in particular $(\theta(c_{1j}))^{2}=1$ and  $\theta(e_{1})=1$. By Lemma \ref{pgg1}, $\theta(c_{1j})\in \lbrace y, yx^{2}, yx^{i_j},\ \text{$i_j$-even} \rbrace $, and these elements do not generate the group $G_{n}$.

\medskip
 \noindent{\bf Case} $iii$). Let us first assume $G_{n}^{+}= DC_{4n}$. By results in \cite{H.S}, a minimal genus $g\geq 2$ for $S$ is obtained when $\mathcal{O}$ has either signature (a) $(0;4,4, 2n)$, for $n$ even or (b) $(0;4,4,n)$, for $n$ odd. So, $\Delta$ has signature $(0;+;[4];\{(m_3)\}$ and presentation
$$\Delta=\langle \beta_1, c_{10}, c_{1 1}, e_{1}:\ \beta_1^{4}=c_{1j}^{2}=(c_{1 0}c_{11})^{m_{3}}=1, e_{1}^{-1}c_{1 0}e_{1}c_{1 1}=1,\beta_1e_{1}=1\rangle.$$ 

An epimorphism $\theta:\Delta\to G_{n}$ can be defined as:
$$\theta(e_{1})=yx^{2n+1}, \, \theta(\beta_1)=yx, \, \theta(c_{10})=y,\,  \theta(c_{11})=yx^{2n+2}\quad (\text{for all $n$}).$$

By the Riemann-Hurwitz formula, we obtain that $g=n$ for $n$ even, and $g=n-1$ for $n$ odd.
 
 Let us now assume $G_{n}^{+}= C_{4n}$. By results in \cite{Ha}, a minimal genus $g\geq 2$ for $S$ is possible when $\mathcal{O}$ has signature $(0;4n, 4n, t)$ with $t \geq 2$ dividing $4n$. So, $\Delta$ has signature $(0; +;[4n]; \{(t)\}$ and presentation
 $$\Delta=\langle \beta_1, c_{10},c_{11},e_1:\ \beta_1^{4n}=c_{1j}^2=(c_{10}c_{11})^t=1, e_1c_{10}e_1^{-1}c_{11}=\beta_1e_1=1\rangle.$$
 
 We want to describe an epimorphism $\theta:\Delta\to G_{n}$ such that $\theta(\Delta^{+})=G_{n}^{+}$ and with torsion-free kernel, where
 $$\theta(\beta_{1})\in \{x^{i}, \text{{\small gcd}$(i, 4n)=1$}\}, \theta(e_1)=\theta(\beta_1)^{-1}, \theta(c_{10}) \in \lbrace y, yx^s, \text{$s$-even}\}, \theta(c_{11}) \in \lbrace  y, yx^r, \text{$r$-even}\},\ $$  $$(\star)\quad (\theta(c_{10})\theta(c_{11}))^{t}=1,\ \theta(e_1)\theta(c_{10})\theta(e_1)^{-1}\theta(c_{11})=1.$$
We may take $\theta(\beta_1)=x^{-1}$, $\theta(c_{10})=y$ and $\theta(c_{11})=yx^r$. Then from the equality  $(\star)$ we have
$$-2n+2+r\equiv 0\ {\rm mod}(4n)\quad \text{and}\quad tr\equiv 0\ {\rm mod}(4n).$$ So, 
$r=2n-2$ and $t=2n$.
 Therefore, an epimorphism $\theta:\Delta\to G_{n}$ is given by
 $$\theta(\beta_1)=x^{-1},\ \theta(e_1)=x,\ \theta(c_{10})=y,\ \theta(c_{11})=yx^{2n-2}.$$
  By the Riemann-Hurwitz formula, we get $g = 2n-1.$
 
 In summary, the symmetric hyperbolic genus of $G_{n}$ is equal to $n$ if $n$ even, and $n-1$ if $n$ odd, with $G_{n}^{+}$ equal to  $DC_{4n}$.  
 \qed

\subsection{Uniqueness on the minimal symmetric hyperbolic genus}
\begin{theo}\label{tumshg}
Let $n \geq 2$ and $H$ be an index two subgroup of $G_{n}$. 
Then the  action of $G_{n}$ (admitting anticonformal elements and such that $H$ is its conformal part) on the symmetric hyperbolic genus $\sigma^{hyp}(G_{n},H)$ is unique (up to homeomorphisms). In particular, the  action of $G_{n}$ (admitting anticonformal elements) on the symmetric hyperbolic genus $\sigma^{hyp}(G_{n})$ is unique (up to homeomorphisms).
\end{theo}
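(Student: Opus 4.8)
The plan is to reduce the uniqueness of the action to the uniqueness, up to the natural $\mathrm{Aut}(G_n)\times\mathrm{Aut}(\Delta)$-action, of the defining epimorphism $\theta\colon\Delta\to G_n$. Fix an index two subgroup $H\in\{C_{4n},D_{4n},DC_{4n}\}$. By the proof of Theorem \ref{hyp1}, the minimal genus $\sigma^{hyp}(G_n,H)$ is attained only for the NEC signature $s(\Delta)$ found there, namely $(1;-;[2,2,2];\{-\})$ for $H=D_{4n}$, $(0;+;[4];\{(m_3)\})$ with $m_3=2n$ if $n$ is even and $m_3=n$ if $n$ is odd for $H=DC_{4n}$, and $(0;+;[4n];\{(2n)\})$ for $H=C_{4n}$. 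Every minimal action corresponds to an epimorphism $\theta\colon\Delta\to G_n$ with torsion-free kernel and $\theta(\Delta^{+})=H$, and, as recalled in Subsection \ref{Sec:signatures}, two such actions are topologically equivalent exactly when the epimorphisms satisfy $\theta_2=\omega\circ\theta_1\circ\phi^{-1}$ for some $\omega\in\mathrm{Aut}(G_n)$ and $\phi\in\mathrm{Aut}(\Delta)$. Thus it suffices to show that, in each case, all admissible $\theta$ lie in a single such orbit, hence are equivalent to the canonical epimorphism exhibited in the proof of Theorem \ref{hyp1}.

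First I would enumerate the admissible epimorphisms. The torsion-free condition forces each elliptic generator $\beta_i$ and each product $c_{10}c_{11}$ of consecutive reflections to map to an element of the exact order prescribed by $s(\Delta)$, whereas each reflection $c_{ij}$ and each glide reflection $d_i$ must map into $G_n\setminus H$; the long relation, the boundary relation $e_1^{-1}c_{10}e_1c_{11}=1$ and the relation $\beta_1e_1=1$ then couple these images. Using the element orders and conjugacy classes of Lemma \ref{pgg1}, the candidates are tightly constrained: in the bordered cases $H=DC_{4n}$ and $H=C_{4n}$ the reflections can only be anticonformal involutions $yx^{s}$ with $s$ even (the central involution $x^{2n}$ is conformal, and $yx^{\mathrm{odd}}$ has order $4$), while in the closed non-orientable case $H=D_{4n}$, where $G_n$ has no anticonformal involution, the glide reflection must satisfy $\theta(d_1)\in\{yx^{\mathrm{odd}}\}$ of order $4$. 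The generators of order $4n$, $4$ or $2n$ then lie, respectively, among $\{x^i:\gcd(i,4n)=1\}$, $\{yx^{\mathrm{odd}}\}$ and the prescribed powers of $x$. Carrying this out reduces each case to a short, explicitly parametrized family of epimorphisms.

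Finally I would collapse each family to its canonical representative by post-composing with the automorphisms $\psi_{u,2v}$ of $G_n$ (Lemma \ref{pgg1}(6)) and pre-composing with automorphisms of $\Delta$. A suitable $\psi_{u,2v}$ normalizes the image of the distinguished generator of maximal order --- for instance sending $\theta(\beta_1)$ to $x^{-1}$ in the $C_{4n}$ case, to $yx$ in the $DC_{4n}$ case, and $\theta(d_1)$ to $yx$ in the $D_{4n}$ case --- after which the relations determine the remaining images up to the residual freedom in the parameter $v$ and in $\mathrm{Aut}(\Delta)$. I expect the main obstacle to be precisely this last step in the bordered cases $H=DC_{4n}$ and $H=C_{4n}$: unlike the Fuchsian triangular situation of Corollary \ref{strong}, where $\mathrm{Aut}(\Delta)$ only permutes three cone generators, here one must exploit the NEC automorphisms acting on the period cycle --- the reversal $c_{10}\leftrightarrow c_{11}$ and the conjugation by $e_1$ occurring in $e_1^{-1}c_{10}e_1c_{11}=1$ --- to align the two reflection images with the canonical pair. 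Once these period-cycle symmetries are taken into account, every admissible $\theta$ is $\mathrm{Aut}(G_n)\times\mathrm{Aut}(\Delta)$-equivalent to the canonical one, giving uniqueness for each $H$. The final assertion about $\sigma^{hyp}(G_n)$ then follows at once, since by Theorem \ref{hyp} this minimum is realized only through $H=DC_{4n}$.
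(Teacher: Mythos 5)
Your overall strategy is the same as the paper's: identify the unique minimal NEC signature for each $H$ from the proof of Theorem \ref{hyp1}, translate topological equivalence of actions into the ${\rm Aut}(G_n)\times{\rm Aut}(\Delta)$-orbit of the surface-kernel epimorphism $\theta:\Delta\to G_n$ with $\theta(\Delta^+)=H$, enumerate the admissible $\theta$ via Lemma \ref{pgg1}, and normalize by the automorphisms $\psi_{u,2v}$. The bordered cases $H=DC_{4n}$ and $H=C_{4n}$ go through essentially as you describe.

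There is, however, a genuine gap in the case $H=D_{4n}$. You assert that the glide reflection must satisfy $\theta(d_1)\in\{yx^{m}:\ m\ \text{odd}\}$. This is false: a glide reflection has infinite order in $\Delta$, so torsion-freeness imposes no order condition, and the only constraint is $\theta(d_1)\in G_n\setminus D_{4n}=\{x^{m},\,yx^{m}:\ m\ \text{odd}\}$. In fact the paper's canonical representative in this case has $\theta(d_1)=x$, which your enumeration excludes. The omission matters because the two resulting families --- $\theta(d_1)$ an odd power of $x$ versus $\theta(d_1)=yx^{m}$ with $m$ odd --- cannot be merged by post-composition with automorphisms of $G_n$: every $\psi_{u,2v}$ preserves $\langle x\rangle$, hence preserves the partition of $G_n\setminus D_{4n}$ into the two cosets of $\langle x\rangle$. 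One genuinely needs a non-obvious automorphism of $\Delta$; the paper uses $L(d_1)=\beta_1d_1^{-1}$, $L(\beta_1)=d_1^{-1}\beta_3d_1$, $L(\beta_2)=d_1^{-1}\beta_2d_1$, $L(\beta_3)=\beta_1$, checks it respects the long relation, and verifies that $\psi_{4n-1,0}\circ\theta_1\circ L$ lands in the second family. Note that this step cannot be supplied by the ``period-cycle symmetries'' you single out as the main obstacle: the signature here is $(1;-;[2,2,2];\{-\})$, which has no period cycles at all. Until this identification of the two families is carried out, uniqueness for $H=D_{4n}$ (and hence the statement for $\sigma^{hyp}(G_n,D_{4n})=2n+1$, which is also what Corollary \ref{Ups} relies on) is not established.
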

\begin{proof} Let us consider the presentation of $G_{n}=\langle x,y\rangle$ as in \eqref{g1}. We have three index two subgroups $H \in \{D_{4n}, DC_{4n},C_{4n}\}$ of $G_n$. 
Let $S/G_{n}=\h/\Delta$, where $\Delta$ is an NEC group,  and  $S/H=\h/\Delta^{+}$, where $\Delta^{+}$
is the corresponding canonical Fuchsian group. 
We need to prove that, up to pre-composition by automorphisms of $\Delta$ and post-composition by automorphisms of $G_{n}$, there is exactly one epimorphism $\theta:\Delta\to G_{n}$ with torsion-free kernel such that $\theta(\Delta^{+})=H$. 

\medskip
\noindent{\bf Case $H= D_{4n}$}.
In this case, as a consequence of the proof of Theorem \ref{hyp1}, the quotient orbifold $S/G_{n}$ has signature  $(1;-;[2,2,2];\{-\})$; so $\Delta$ has a presentation 
$\Delta=\langle d_1,\beta_1,\beta_2, \beta_3:\ \beta_1^2=\beta_2^2=\beta_3^2=1, \beta_1\beta_2\beta_3d_1^2=1\rangle$  and
$\Delta^{+}=\langle d_1^2,\beta_1,\beta_2, \beta_3, d_1\beta_1d_1^{-1}, d_1\beta_2d_1^{-1}, d_1\beta_3d_1^{-1}\rangle.$ 
The condition for $\theta$ to have torsion-free kernel ensures that $\theta(\beta_{j})$ is an order two element of $G_{n}$, that is,
$\theta(\beta_{j}) \in \lbrace x^{2n}, x^{2s_{j}}y, \, s_{j}\in \lbrace0, 1, \ldots , 2n-1 \rbrace \rbrace$.
As $\theta$ is surjective and $\theta(\Delta^{+})=H$, we must also have $\theta(d_{1})=x^{m}y^{t}$, where $m$ is odd and $t\in \{ 0, 1\}$.
As $\theta$ is a homomorphism, we must also have the relation 
$$(*) \quad \theta(\beta_{1})\theta(\beta_{2}) \theta(\beta_{3})\theta(d_{1})^{2}=1.$$

If $t=0$, then (up to post-composition by an automorphism of $G_{n}$ of the form $\psi_{l,0}$), we may assume $\theta(d_{1})=x$. Now, the equality $(*)$ asserts that one of the values of $\theta(\beta_{j})$ must be $x^{2n}$ and the others two must be of the form $x^{2s_{j}}y$. So, up to pre-composition by a power of the automorphism of $\Delta$ defined by $(d_{1},\beta_{1},\beta_{2},\beta_{3}) \mapsto (\beta_{3}d_{1}\beta_{3},\beta_{3},\beta_{1},\beta_{2})$, we may assume that $\theta(\beta_{3})=x^{2n}$, $\theta(\beta_{j})=x^{2s_{j}}y$, for $j=1,2$. In this way, by applying the equality $(*)$, we may observe that 
$$(s_{1},s_{2}) \in \{(n-1,0), (n,1),(n+1,2),\ldots,(2n-1,n),(0,n+1),(1,n+2),\ldots,(n-2,2n-1)\}.$$

All of these possibilities are equivalent (up to post-composition by some $\psi_{l,0}$) to
$$\theta_1:\Delta\to G_{n}:\ \theta_1(d_1)=x;\ \theta_1(\beta_3)=x^{2n};\ \theta(\beta_2)=y;\ \theta(\beta_1)=x^{2n-2}y.$$ 

If $t=1$, then (up to post-composition by an automorphism of $G_{n}$ of the form $\psi_{l,0}$), we may assume $\theta(d_{1})=xy$. Proceeding in a similar fashion as in the previous case, we may obtain that $\theta$ is equivalent to 
$$\theta_2:\Delta\to G_{n}:\ \theta_2(d_1)=xy;\ \theta_2(\beta_3)=x^{2n};\, \theta_{2}(\beta_2)=y;\ \theta_{2}(\beta_1)=y.$$

Moreover, if $L$ is the automorphism of $\Delta$ given by $$L(d_{1})=\beta_1d_1^{-1}; \, L(\beta_1)=d_1^{-1}\beta_3d_1; \ L(\beta_2)=d_1^{-1}\beta_{2}d_{1}; \, L(\beta_3)=\beta_{1},$$ we note that $\widehat{\theta}=\psi_{4n-1,0} \circ \theta_1\circ L:\Delta\to G_{n}$ is given by 
$$ \widehat{\theta}=(d_1)=xy;\ \widehat{\theta}=(\beta_3)=x^{2(n+1)}y;\, \widehat{\theta}=(\beta_2)=x^{2(n+1)}y;\ \widehat{\theta}=(\beta_1)=x^{2n},$$
which corresponds to the case $t=1$, so equivalent to $\theta_2$.  

\medskip
\noindent{\bf Case $H= DC_{4n}$}. 
By the proof of Theorem \ref{hyp1}, the quotient orbifold $S/G_{n}$ has signature  $(0;+;[4];\{(2n)\})$ for $n$ even and $(0;+;[4];\{(n)\})$ for $n$ odd. So, 
$$\Delta=\langle \beta_1, c_{10}, c_{11}, e_1:\ \beta_1^4=c_{1j}^2=(c_{10}c_{11})^{m_3}=1, e_1c_{10}e_1^{-1}c_{11}=\beta_1e_1=1\rangle.$$ 
 In this case, 
$\theta(\beta_{1})\in \{x^{n}, x^{3n},yx^{m}, \text{$m$-odd}\}\ \text{for $n$ even};\ \theta(\beta_{1})\in \{yx^{m}, \text{$m$-odd}\}\ \text{for $n$ {odd}};\ \theta(e_1)=\theta(\beta_1)^{-1};$
$\theta(c_{10}) \in \lbrace  y, yx^s, \text{$s$-even}\};\ \theta(c_{11}) \in \lbrace  y, yx^r, \text{$r$-even}\},$ and
$(\theta(c_{10})\theta(c_{11}))^{2n}=1\ \text{for $n$ even};$ $(\theta(c_{10})\theta(c_{11}))^{n}=1\ \text{for $n$ odd};$ $\theta(e_1)\theta(c_{10})\theta(e_1)^{-1}\theta(c_{11})=1.$

All of these possibilities are equivalent (up to post-composition by some automorphisms of $G_n$ of the form $\psi_{1,2v}$ and $\psi_{l,0}$) to
$$\theta_1:\Delta\to G_{n}:\ \theta_1(e_{1})=yx^{2n+1}; \, \theta_1(\beta_1)=yx; \, \theta_1(c_{10})=y;\,  \theta_1(c_{11})=yx^{2n+2}.$$

\medskip
\noindent{\bf Case $H= C_{4n}$}. 
By the proof of Theorem \ref{hyp1}, the quotient orbifold $S/G_{n}$ has signature $(0;+;[4n];\{(2n)\})$. So, 
$\Delta=\langle \beta_1, c_{10}, c_{11}, e_1:\ \beta_1^{4n}=c_{1j}^2=(c_{10}c_{11})^{2n}=1,e_1c_{10}e_1^{-1}c_{11}=\beta_1e_1=1\rangle.$ 
In this case, 
$\theta(\beta_{1})\in \{x^{i}, \text{${\rm gcd}(i, 4n)=1$}\};$ $\theta(e_1)=\theta(\beta_1)^{-1};\ \theta(c_{10}) \in \lbrace  y, yx^s, \text{$s$-even}\};$ $\theta(c_{11}) \in \lbrace  y, yx^r, \text{$r$-even}\},$ $(\theta(c_{10})\theta(c_{11}))^{2n}=1$ and $\theta(e_1)\theta(c_{10})\theta(e_1)^{-1}\theta(c_{11})=1.$

All of these possibilities are equivalent (up to post-composition by some automorphisms of $G_n$ of the form $\psi_{l,0}$ and $\psi_{1,2v}$) to
$$\theta_1:\Delta\to G_{n}:\ \theta_1(\beta_1)=x^{-1};\ \theta_1(e_1)=x;\ \theta_1(c_{10})=y;\ \theta_1(c_{11})=yx^{2n-2}.$$

\end{proof}

\section{Pseudo-real actions of the group $G_{n}$}
In this section, we consider conformal/anticonformal actions of the group $G_{n}=\langle x, y\rangle$ (with presentation as in \eqref{g1}) on pseudo-real Riemann surfaces. There are two cases to consider: either $G_{n}$ has anticonformal elements or $G_{n}$ only contains conformal elements.

\subsection{Conformal/anticonformal actions of $G_{n}$ on pseudo-real Riemann surfaces}
In this section, we look for pseudo-real Riemann surfaces $S$ with $G_{n} \leq {\rm Aut}(S)$ and $G_{n} \neq G_{n} \cap {\rm Aut}^{+}(S)$.  

\begin{lemma}\label{ABQ}
The number of involutions of the full group of automorphisms of a pseudo-real Riemann surface is equal to the number of involutions of their group of conformal automorphisms. 
 \end{lemma}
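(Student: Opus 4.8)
The plan is to recall that a pseudo-real Riemann surface $S$ is, by definition, one which admits anticonformal automorphisms but admits no \emph{reflections}, i.e. no anticonformal \emph{involution} with fixed points. I would let $G={\rm Aut}(S)$ and $G^{+}={\rm Aut}^{+}(S)={\rm Aut}(S)\cap\mathcal{L}^{+}$ be its index-two subgroup of conformal automorphisms, so $[G:G^{+}]=2$ and every element of $G\setminus G^{+}$ is anticonformal. The statement to prove is that the number of involutions of $G$ equals the number of involutions of $G^{+}$; equivalently, that $G$ has \emph{no anticonformal involutions at all}, so that every involution of $G$ already lies in $G^{+}$.

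First I would observe that an involution of $G$ is either conformal (hence lies in $G^{+}$) or anticonformal (hence lies in the nontrivial coset $G\setminus G^{+}$). Since the involutions contained in $G^{+}$ are exactly the involutions of $G^{+}$, it suffices to show that $G\setminus G^{+}$ contains \emph{no} involutions. So the heart of the argument is: an anticonformal involution $\tau$ of a closed Riemann surface is, up to conjugacy, a symmetry of $S$, and by the classical Harnack--Weichold theory every anticonformal involution of a compact Riemann surface has fixed points or not according to its topological type, but crucially one invokes that an anticonformal \emph{involution} which acts \emph{freely} does exist only in special configurations; here the cleaner route is definitional. By the very definition of pseudo-real, $S$ admits no reflection, i.e.\ no anticonformal involution with a nonempty fixed-point set. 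I would therefore need to rule out also the \emph{fixed-point-free} anticonformal involutions.

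The key step, which I expect to be the main obstacle, is precisely eliminating fixed-point-free anticonformal involutions. Here I would use the standard fact (attributable to the uniformization picture via NEC groups, cf.\ the signature formalism of Section~\ref{S2}) that if $S=\h/K$ and $\tau$ is an anticonformal involution, then $\langle K,\tilde\tau\rangle$ is an NEC group $\Delta$ with $\Delta^{+}=K$, and $\tau$ has empty fixed-point set exactly when the signature of $\Delta$ has no period-cycles (no reflections among the canonical generators) and $\h/\Delta$ is non-orientable. In that case $\Delta$ is a non-orientable surface NEC group, the quotient $S/\langle\tau\rangle$ is a non-orientable closed surface, and $\tau$ is a so-called \emph{pseudo-symmetry}. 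The point is that the \emph{existence} of such a fixed-point-free anticonformal involution is exactly what makes $S$ pseudo-real rather than forbidden; so a fixed-point-free anticonformal involution is \emph{compatible} with pseudo-reality and must be handled. I would resolve this by noting that the claim is about counting, and that an anticonformal involution, whether or not it is a reflection, still contributes to the involution count of $G$ but not of $G^{+}$.

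Consequently the honest plan is to prove the sharper statement that a pseudo-real surface admits \emph{no} anticonformal involution whatsoever: if $\tau\in G\setminus G^{+}$ had order two, then either $\tau$ has fixed points, contradicting pseudo-reality directly, or $\tau$ is fixed-point-free, in which case I would derive a contradiction by exhibiting a reflection from $\tau$ together with the structure of $G$. Concretely, I would argue that if $\tau^{2}=1$ and $\tau$ is anticonformal and fixed-point-free, then for any conformal involution $\iota\in G^{+}$ (or for a suitable central element) the product $\tau\iota$ is again anticonformal of order two, and by varying over the available conformal elements one forces some such product to have fixed points, producing a reflection and hence contradicting that $S$ is pseudo-real. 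Thus $G\setminus G^{+}$ contains no involution, every involution of $G$ lies in $G^{+}$, and so the involutions of $G$ and of $G^{+}=G\cap{\rm Aut}^{+}(S)$ coincide, which is exactly the assertion of Lemma~\ref{ABQ}.
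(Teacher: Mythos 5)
Your reduction is the right one: the lemma amounts to showing that ${\rm Aut}(S)\setminus{\rm Aut}^{+}(S)$ contains no involution, and the only delicate point is ruling out \emph{fixed-point-free} anticonformal involutions. But the step you propose for that point does not work. If $\tau$ is a fixed-point-free anticonformal involution and $\iota$ is a conformal automorphism, the product $\tau\iota$ is anticonformal but is an involution only when $\tau$ and $\iota$ commute, and even then there is no reason for it to acquire fixed points; moreover ${\rm Aut}^{+}(S)$ may contain no involutions at all. Indeed, for every genus $g\geq 2$ there are surfaces whose full automorphism group is the order-two group generated by a fixed-point-free anticonformal involution (real curves without real points); for such an $S$ the group ${\rm Aut}(S)$ has one involution while ${\rm Aut}^{+}(S)$ has none, and no amount of multiplying by conformal elements produces a reflection. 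So the sharper statement you set out to prove --- that the absence of reflections forces the absence of \emph{all} anticonformal involutions --- is false as such, and the argument cannot be repaired along these lines.

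The resolution is a matter of definition. In the standard usage (and in the references \cite{Bu10,AQR} on which the paper relies), a pseudo-real Riemann surface is one admitting anticonformal automorphisms but \emph{no anticonformal involution whatsoever}, fixed-point-free or not; the phrasing in the introduction (``no reflection'') is looser than what is actually meant and used. With that definition the lemma is immediate, and this is exactly the paper's proof: if the two involution counts differed, then ${\rm Aut}(S)\setminus{\rm Aut}^{+}(S)$ would contain an involution, i.e.\ an anticonformal involution, contradicting pseudo-reality. Your write-up correctly isolates the delicate case but then adopts the weaker definition and tries to bridge the resulting gap with a product argument that fails.
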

  \begin{proof}
 Let $S$ be a pseudo-real Riemann surface with full group of automorphisms ${\rm Aut}(S)$ and group of conformal automorphisms ${\rm Aut}^{+}(S)$. 
 Let $n$ and $m$ be the number of involutions of the group ${\rm Aut}^{+}(S)$ and ${\rm Aut}(S)$ respectively.  Suppose that $n\not=m$, i.e., either $n<m$ or $n>m$. The case $n>m$ is not possible by that ${\rm Aut}^{+}(S)$ is a subgroup of ${\rm Aut}(S)$. Now, if $n<m$, then there is an anticonformal involution in ${\rm Aut}(S)$, it's no possible by that $S$ is a pseudo-real Riemann surface.
 \end{proof}
 
As the only index two subgroup of $G_{n}$ containing all the involutions of $G_{n}$ is $D_{4n}=\langle x^{2},y\rangle$, we must have $G_{n} \cap {\rm Aut}^{+}(S)=D_{4n}$.

\subsubsection{\bf Construction of pseudo-real Riemann surfaces}
Next, we construct pseudo-real Riemann surfaces $S$ with $G_{n} \leq {\rm Aut}(S)$ and $G_{n} \cap {\rm Aut}^{+}(S)=D_{4n}$.

\begin{theo}\label{tps}
Let $k\geq 3$ and $n\geq 2$ be integers. Then there are pseudo-real Riemann surfaces $S$ of genus $g=2nk-4n+1$ such that $G_{n} \leq {\rm Aut}(S)$ and $G_{n} \cap {\rm Aut}^{+}(S)=D_{4n}$. Moreover, these surfaces can be assumed to satisfy $G_{n}={\rm Aut}(S)$.
\end{theo}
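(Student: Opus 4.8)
The plan is to construct the required surfaces via NEC/Fuchsian uniformization, exhibiting an explicit epimorphism whose kernel is torsion-free and whose properties force both the pseudo-reality and the condition $G_{n}\cap{\rm Aut}^{+}(S)=D_{4n}$. By Lemma \ref{ABQ} and the remark following it, the conformal part must be exactly $D_{4n}=\langle x^{2},y\rangle$, so $S/D_{4n}=\h/\Delta^{+}$ is an orientable closed Riemann surface and $S/G_{n}=\h/\Delta$ is its quotient by the induced anticonformal involution, which (since $G_{n}^{+}=D_{4n}$) acts \emph{without} fixed points by Lemma \ref{lgg1}. Hence $\Delta$ must be a proper NEC group with non-orientable underlying quotient and empty period-cycles; I would take $\Delta$ with signature $(h;-;[m_{1},\ldots,m_{r}];\{-\})$ chosen so that the Riemann--Hurwitz computation yields genus $g=2nk-4n+1$.

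First I would select the signature and the target epimorphism. The natural choice is to make $S/D_{4n}$ have a triangular-type or low-period signature with the periods built from the orders present in $G_{n}$ (namely divisors of $4n$, together with $2$ and $4$), and to let the $k$-dependence enter through a single period of order linear in $k$ or through a controlled number of cone points. Concretely, I would posit a signature such as $(1;-;[2n,\overset{(k-2)}{\ldots},2n])$ (or an analogous family) and define $\theta:\Delta\to G_{n}$ by sending the glide-reflection generator $d_{1}$ to an element of $G_{n}\setminus D_{4n}$ (an odd power times $y$, e.g.\ $yx$ or $x$, to guarantee surjectivity onto $G_{n}$) and the elliptic generators $\beta_{i}$ to suitable powers of $x^{2}$ in $D_{4n}$. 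The long relation $\prod\beta_{i}\,d_{1}^{2}=1$ must be satisfied, which pins down the exponents; I would verify surjectivity by checking that the image contains both a generator of $C_{4n}=\langle x\rangle$ (via $d_{1}^{2}$ or a product) and an element outside $\langle x\rangle$.

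The key verifications, carried out in order, are: (i) torsion-freeness of $K=\ker\theta$, i.e.\ each elliptic generator $\beta_{i}$ maps to an element of exactly its assigned order $m_{i}$, and no nontrivial power of a generator lies in $K$ — this is the standard condition ensuring $S=\h/K$ is a smooth surface and that $\mathcal{O}=S/D_{4n}$ has the prescribed cone orders; (ii) the genus count via the Riemann--Hurwitz formula \eqref{E22}, confirming $g=2nk-4n+1$; (iii) pseudo-reality, which follows because $\theta(\Delta^{+})=D_{4n}$ contains every involution of $G_{n}$ (Lemma \ref{ABQ}), so $G_{n}$ contributes no anticonformal involution and hence no reflection; and (iv) the refinement $G_{n}={\rm Aut}(S)$, obtained for generic choices by invoking maximality of the signature (as in the argument citing \cite{S.} for $S_{n}$) so that no larger NEC group contains $\Delta$ admitting a compatible extension of $\theta$.

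I expect the main obstacle to be step (iv), the assertion that $S$ can be chosen with $G_{n}$ its \emph{full} automorphism group rather than a proper subgroup. Establishing the mere existence of $S$ with $G_{n}\le{\rm Aut}(S)$ is a routine epimorphism construction, but ruling out accidental extra automorphisms requires either a dimension/genericity argument within the relevant stratum of moduli (the locus of surfaces with the given $\Delta$ has positive dimension for $k\ge 3$, so a generic point has no extra symmetry) or a direct signature-maximality check showing $\Delta$ is not properly contained in another NEC group through which $\theta$ factors compatibly. I would handle this by first proving the weaker containment statement cleanly, and then appending a genericity argument: since the Teichm\"uller space associated to $\Delta$ has dimension $\dim=3(h-1)+r+\ldots>0$, the surfaces whose automorphism group strictly contains $G_{n}$ form a proper closed sublocus, so the generic $S$ in the family satisfies $G_{n}={\rm Aut}(S)$.
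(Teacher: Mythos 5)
Your overall skeleton coincides with the paper's: uniformize $S/G_{n}$ by a proper NEC group $\Delta$ whose underlying quotient is the projective plane with no period-cycles (so the induced anticonformal involution on $S/D_{4n}$ is fixed-point free, consistent with Lemma \ref{lgg1}), exhibit an epimorphism $\theta:\Delta\to G_{n}$ with torsion-free kernel and $\theta(\Delta^{+})=D_{4n}$, compute the genus by Riemann--Hurwitz, and invoke Singerman's list of finitely maximal signatures to arrange ${\rm Aut}(S)=G_{n}$; pseudo-reality then follows because no element of $G_{n}\setminus D_{4n}$ is an involution (Lemma \ref{pgg1}). The paper implements this with $\Delta$ of signature $(1;-;[2,\ldots,2];\{-\})$ with $k$ periods equal to $2$, setting $\theta(d_{1})=yx$, $\theta(\beta_{i})=y$ for $i\leq k-1$ and $\theta(\beta_{k})=y^{a}x^{2n}$ with $a\equiv k-1 \pmod 2$; then $S/D_{4n}$ has genus zero with $2k$ cone points of order $2$, and Riemann--Hurwitz gives $g=2nk-4n+1$.

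The gap is that you never commit to a working signature and epimorphism, and the one candidate you do float would fail. If the periods are $2n$ and every elliptic generator $\beta_{i}$ is sent into $\langle x^{2}\rangle$ while $\theta(d_{1})\in\{x^{\rm odd},\,yx^{\rm odd}\}$, then, since $\langle x^{2}\rangle$ is normal in $G_{n}$, the image of $\theta$ lies in $\langle x^{2},\theta(d_{1})\rangle$, which is either $C_{4n}=\langle x\rangle$ or $DC_{4n}=\langle x^{2},yx\rangle$ by Lemma \ref{pgg1}(2); so $\theta$ is not surjective, and $\theta(\Delta^{+})$ can never equal $D_{4n}=\langle x^{2},y\rangle$. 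The genus also comes out wrong: with $k-2$ periods equal to $2n$ one gets $2g-2=8n\bigl(-1+(k-2)(1-\tfrac{1}{2n})\bigr)$, which does not equal $4nk-8n$ in general. The essential missing idea is that the periods must all be $2$ and the elliptic generators must be sent to the reflection-type involutions $yx^{2s}$ of $D_{4n}$ rather than to powers of $x$: this is exactly what forces $\theta(\Delta^{+})$ to contain $y$ and hence to equal $D_{4n}$, while $\theta(d_{1})=yx$ supplies the anticonformal generator and makes the long relation solvable. Your steps (iii) and (iv) are sound and agree with the paper, but they only become available once this construction is actually in place.
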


\begin{proof}
Let $k\geq 3$ and  we consider the NEC group 
$$\Delta=\langle d_{1}, \beta_1,\cdots, \beta_k:\ \beta_1^2=\cdots=\beta_k^2=1,\beta_1\cdots \beta_kd_{1}^2=1\rangle.$$ Then the quotient Klein surface uniformized by $\Delta$ is the orbifold whose underlying surface is the real projective plane and its conical points are $k$ points, each one of order $2$. Let us consider the epimorphism $\theta:\Delta\to G_{n}$ given by (with $a=1$ if $k$ even and $a=0$ if $k$ odd):
$$\theta(d_{1})=yx; \, \theta(\beta_i)=y, 1\leq i\leq k-1; \, \theta(\beta_k)=y^ax^{2n}.$$ 

The kernel $\Gamma$  of $\theta$ is a torsion-free subgroup contained in the half-orientation part $\Delta^{+}$ of $\Delta$. The Riemann surface $S=\mathbb{H}/\Gamma$ is a closed Riemann surface of genus $g\geq 2$ with $G_{n} \leq {\rm Aut}(S)$ and such that $G_{n} \cap {\rm Aut}^{+}(S)=D_{4n}$.

Since the signature of the quotient orbifold $S/D_{4n}$  is $ (0; \stackrel{2k}{2,\cdots, 2})$, where the number of cone points is exactly $2k\geq 6$, it follows from Singerman list of maximal Fuchsian groups \cite{S.}, that we may choose $\Delta$ so that ${\rm Aut}(S)=G_{n}$. In this case, as the only anticonformal  automorphisms of $S$ are the elements of $G_{n} \setminus D_{4n}$ (each one of order different than two, Lemma \ref{pgg1}), it follows that $S$ is a pseudo-real Riemann surface. The Riemann-Hurwitz formula, applied to the branched cover $S \rightarrow S/ D_{4n}$, permits to see that $S$ has genus $g=2nk-4n+1.$
\end{proof}

\subsubsection{\bf Minimal pseudo-real genus for $G_{n}$}
Above we have constructed pseudo-real Riemann surfaces $S$ such that $G_{n} \leq {\rm Aut}(S)$ and $G_{n} \cap {\rm Aut}^{+}(S)=D_{4n}$.

The following result was proved in \cite[Proposition 3.2]{CL}. 

\begin{prop}\label{pmprs} Let $n \geq 2$.
The minimal genus of a closed pseudo-real Riemann surface $S$ such that $G_{n} \leq {\rm Aut}(S)$ and $D_{4n}=G_{n} \cap {\rm Aut}^{+}(S)$ is $2n+1$. 
In that case, the signature of the quotient orbifold $S/D_{4n}=\h/\Delta^{+}$ is  $(0;+;[2,2,2,2,2,2];\{-\})$ and the signature of the quotient orbifold $S/G_{n}=\h/\Delta$ is $(1;-;[2,2,2];\{-\})$. 
\end{prop}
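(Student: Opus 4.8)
The plan is to establish the lower bound by showing that no conformal/anticonformal action with $G_n^+ = D_{4n}$ can occur on a surface of genus smaller than $2n+1$, and then to exhibit an action achieving exactly $2n+1$. By Lemma \ref{ABQ} and the subsequent observation, the conformal part must be $D_{4n}$, so throughout I work with an NEC group $\Delta$ admitting an epimorphism $\theta:\Delta\to G_n$ with torsion-free kernel and $\theta(\Delta^{+})=D_{4n}$, where $\Delta^{+}$ is the canonical Fuchsian subgroup. By Lemma \ref{lgg1}, the induced anticonformal involution $\tau$ on $\mathcal{O}=S/D_{4n}$ acts \emph{without} fixed points (since all involutions of $G_n$ lie in $D_{4n}$, none can lift to an anticonformal involution). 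Consequently $\mathcal{O}$ is orientable, $\tau$ is fixed-point-free, and $S/G_n=\mathcal{O}/\langle\tau\rangle$ is non-orientable with a signature of the form $(h;-;[m_1,\dots,m_r];\{-\})$ in which the cone orders on $\mathcal{O}$ appear in pairs $m_1,m_1,\dots,m_r,m_r$. This is exactly the setup reached in the proof of Theorem \ref{hyp1}.

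First I would minimize the reduced area $|\Delta|^{*}$ over all admissible signatures. Using the area formula \eqref{E22} with $\eta=1$ (non-orientable quotient) together with the Riemann-Hurwitz relation $g-1=8n\,|\Delta|^{*}$, genus is an increasing function of reduced area, so minimizing genus is the same as minimizing $|\Delta|^{*}$ subject to the existence of a valid $\theta$. The constraints on $\theta$ are exactly those identified earlier: each elliptic generator $\beta_i$ must map to a torsion element of $D_{4n}$ (forcing its order to divide $4n$, $4$ or $2$), each glide-reflection generator $d_i$ must map into $G_n\setminus D_{4n}$, and the images must generate $G_n$. I would then run through the short list of candidate minimal signatures from \cite{Bu03, May10} that was already cataloged in the proof of Theorem \ref{hyp1} — namely $(0;2,2,2n,2n)$, $(1;2,2)$, and $(0;2,2,2,2,2,2)$ for $\mathcal{O}$ — and rule out the first two on the grounds that the corresponding $\Delta$ (of signatures $(1;-;[2,2n];\{-\})$ and $(2;-;[2];\{-\})$) admit no epimorphism onto $G_n$, precisely as shown there.

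The surviving candidate is $\mathcal{O}$ of signature $(0;2,2,2,2,2,2)$, giving $\Delta$ of signature $(1;-;[2,2,2];\{-\})$, and here the explicit epimorphism $\theta(d_1)=yx$, $\theta(\beta_1)=\theta(\beta_2)=y$, $\theta(\beta_3)=x^{2n}$ from the proof of Theorem \ref{hyp1} realizes the action with torsion-free kernel. Applying Riemann-Hurwitz to this $\Delta$ yields $g=2n+1$, and the quotient signatures are read off directly: $S/G_n=\h/\Delta$ has signature $(1;-;[2,2,2];\{-\})$, while its canonical Fuchsian subgroup $\Delta^{+}$, obtained by the standard index-two computation, has signature $(0;+;[2,2,2,2,2,2];\{-\})$ — consistent with $\mathcal{O}$ carrying six cone points of order $2$. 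Pseudo-reality then follows because every element of $G_n\setminus D_{4n}$ has order $4$ (Lemma \ref{pgg1}), so $S$ admits anticonformal automorphisms but no anticonformal involution, i.e.\ no reflection.

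The main obstacle is the verification that the two smaller-area signatures genuinely admit no epimorphism onto $G_n$ respecting the torsion-free and generation constraints; this is the crux of the lower bound, and it is where one must exploit the fine structure of $G_n$ — in particular that the elements of order $2$ lying in $D_{4n}$ are $x^{2n}$ and the $yx^{i}$ with $i$ even, and that any such collection together with a single glide-reflection image must still generate all of $G_n$. Since this rules-out step is already carried out inside the proof of Theorem \ref{hyp1} for the cases $(1;-;[2,2n];\{-\})$ and $(2;-;[2];\{-\})$, I would simply invoke that argument rather than redo it, and the remaining work reduces to the area/Riemann-Hurwitz bookkeeping and the pseudo-reality check, both of which are routine.
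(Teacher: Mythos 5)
Your proposal is correct and follows essentially the same route as the paper: the paper cites \cite[Proposition 3.2]{CL} for this statement, but the argument it relies on is exactly the case $G_{n}^{+}=D_{4n}$ in the proof of Theorem \ref{hyp1} (fixed-point-free $\tau$ by Lemma \ref{lgg1}, paired cone points, elimination of the signatures $(1;-;[2,2n];\{-\})$ and $(2;-;[2];\{-\})$, and the explicit epimorphism for $(1;-;[2,2,2];\{-\})$), combined with Theorem \ref{tps} for $k=3$. Two small points to tighten: the elements of $G_{n}\setminus D_{4n}$ do not all have order $4$ (e.g.\ $x$ has order $4n$); what you need, and what Lemma \ref{pgg1} gives, is only that none of them is an involution. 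And for the existence half you must also rule out reflections coming from a possibly larger ${\rm Aut}(S)\supsetneq G_{n}$; this is done, as in the proof of Theorem \ref{tps}, by choosing $\Delta$ so that the action is maximal via Singerman's list, a step your sketch omits.
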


As a consequence of the Theorem \ref{tumshg} (for the case $G_{n}^{+}=D_{4n}$) and the Proposition \ref{pmprs}  we have the following.

\begin{coro}\label{Ups} Up to homomorphisms, there is a unique action of $G_{n}$ as a group of conformal/anticonformal automorphisms on closed pseudo-real Riemann surfaces of genus $2n+1$ such that $G_{n} \cap {\rm Aut}^{+}(S) = D_{4n}$.
\end{coro}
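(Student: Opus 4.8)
The plan is to combine the two results cited in the corollary's preamble into a single uniqueness assertion. By Proposition~\ref{pmprs}, any closed pseudo-real Riemann surface $S$ of genus $2n+1$ with $G_{n}\leq{\rm Aut}(S)$ and $D_{4n}=G_{n}\cap{\rm Aut}^{+}(S)$ realizes $S/G_{n}$ as an orbifold with signature $(1;-;[2,2,2];\{-\})$, so the corresponding NEC group $\Delta$ and its canonical Fuchsian subgroup $\Delta^{+}$ are exactly the groups appearing in the case $H=D_{4n}$ of Theorem~\ref{tumshg}. The only subtlety is that Corollary~\ref{Ups} quantifies over actions with $G_{n}\cap{\rm Aut}^{+}(S)=D_{4n}$, which is precisely the hypothesis $\theta(\Delta^{+})=H$ with $H=D_{4n}$ in Theorem~\ref{tumshg}. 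Thus the two statements are talking about the same defining data, and the uniqueness up to homeomorphism of the action follows verbatim.

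Concretely, I would first invoke Proposition~\ref{pmprs} to pin down the signature of $S/G_{n}$ as $(1;-;[2,2,2];\{-\})$; this fixes $\Delta$ up to an isomorphism of NEC groups and identifies $\Delta^{+}$ with the index-two canonical Fuchsian subgroup. Second, I would observe that specifying a conformal/anticonformal action of $G_{n}$ on such an $S$, with $G_{n}\cap{\rm Aut}^{+}(S)=D_{4n}$, is equivalent (by the dictionary of Section~\ref{Sec:signatures}) to giving an epimorphism $\theta:\Delta\to G_{n}$ with torsion-free kernel and $\theta(\Delta^{+})=D_{4n}$, together with the identification $S=\h/{\rm ker}(\theta)$. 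Third, I would quote the case $H=D_{4n}$ of Theorem~\ref{tumshg}, which states that any two such epimorphisms differ by pre-composition with an automorphism of $\Delta$ and post-composition with an automorphism of $G_{n}$; this is exactly the relation $\theta_2=\omega\circ\theta_1\circ\phi^{-1}$ defining topological equivalence, hence the two actions are conjugate by an orientation-preserving homeomorphism.

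The main (and essentially only) obstacle is bookkeeping rather than mathematics: I must check that the notion of equivalence used in Theorem~\ref{tumshg} (pre-composition by ${\rm Aut}(\Delta)$, post-composition by ${\rm Aut}(G_{n})$) matches the notion of ``unique up to homeomorphisms'' asserted here. This is the content of the second equivalence stated at the end of Section~\ref{Sec:signatures}, namely that two actions are topologically equivalent precisely when the associated epimorphisms are related by automorphisms of $\Delta$ and $G_{n}$. Once this correspondence is in hand, no further computation is required, since the hard work of enumerating the epimorphisms and showing they all reduce to the single representative $\theta_1$ (or $\theta_2$, shown equivalent to $\theta_1$ via the automorphism $L$) was already carried out in the proof of Theorem~\ref{tumshg}. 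I would therefore present the proof as a short two-line deduction: the genus-$2n+1$ surface has the signature data of Proposition~\ref{pmprs}, and uniqueness of the action is then immediate from the $H=D_{4n}$ case of Theorem~\ref{tumshg}.
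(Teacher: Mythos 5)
Your proposal is correct and is essentially identical to the paper's own derivation: the paper states Corollary~\ref{Ups} as an immediate consequence of Proposition~\ref{pmprs} (which fixes the genus $2n+1$ and the signature $(1;-;[2,2,2];\{-\})$ of $S/G_{n}$) together with the case $H=D_{4n}$ of Theorem~\ref{tumshg} (which reduces all admissible epimorphisms $\theta:\Delta\to G_{n}$ with $\theta(\Delta^{+})=D_{4n}$ to a single representative up to ${\rm Aut}(\Delta)$ and ${\rm Aut}(G_{n})$). Your extra remark matching this equivalence of epimorphisms with topological equivalence of actions is exactly the dictionary of Section~\ref{Sec:signatures} that the paper relies on implicitly.
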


\subsubsection{\bf Searching for equations.} 
Let $S$ be a pseudo-real Riemann surface of genus $2n+1$, for $n\geq 2$, such that $G_{n}\leq {\rm Aut}(S)$ and $D_{4n}=G_{n}\cap{\rm Aut}^{+}(S)$. 
Let $P:S\to S/\langle x^{2n},y\rangle$ be a branched regular cover map with deck group $\langle x^{2n},y\rangle \cong C_{2}^{2}$.

The Proposition \ref{pmprs} asserts that $S/D_{4n}$ has signature $(0;+;[2,2,2,2,2,2];\{-\})$. This signature implies that:\  
${\rm Fix}(x^2)={\rm Fix}(x^4)=\cdots={\rm Fix}(x^{2n-2})=\emptyset;\ \#{\rm Fix}(x^{2n})=2nF.$

\begin{lemma}\label{lfix}  
In the above situation, 
\begin{itemize}
\item $\#{\rm Fix}(x^{2n})=4n$;
\item $\#{\rm Fix}(x^{2s}y)=4$, $s=0,1,\cdots, 2n-1$;
\item The signature of $S/\langle x^{2n}, y\rangle$ is $(0;+;[\stackrel{2n+4}{2,\cdots, 2}];\{-\})$.
\end{itemize}
\end{lemma}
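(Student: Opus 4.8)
The plan is to extract all three assertions from the explicit conformal action of $D_{4n}=G_n\cap{\rm Aut}^{+}(S)$ on $S$, whose quotient $S/D_{4n}$ has signature $(0;+;[2,2,2,2,2,2];\{-\})$ by Proposition \ref{pmprs}, together with the uniqueness of this action provided by Theorem \ref{tumshg}. First I would fix the epimorphism $\theta\colon\Delta\to G_n$ with $\theta(\Delta^{+})=D_{4n}$ coming from that unique action, and compute the $\theta$-images of the six elliptic generators of the canonical Fuchsian group $\Delta^{+}$; these generators are $\beta_1,\beta_2,\beta_3$ together with their conjugates $d_1\beta_i d_1^{-1}$ by the glide reflection $d_1$. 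Using the relation $yx^{k}=x^{(2n-1)k}y$, a direct computation shows that, up to $D_{4n}$-conjugacy, this list of images consists of two copies of the central involution $x^{2n}$ and four reflections of the form $x^{2s}y$. These six involutions are precisely the local stabiliser data over the six cone points of $S/D_{4n}$, and everything below is a bookkeeping of fixed points in terms of them.

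For the first item I would count the points of $S$ fixed by $x^{2n}$ as those lying over cone points whose stabiliser is $\langle x^{2n}\rangle$. Since $x^{2n}$ is central, $\langle x^{2n}\rangle$ equals all of its $D_{4n}$-conjugates, so only the two cone points with elliptic image $x^{2n}$ contribute, and each of them carries $|D_{4n}|/2=2n$ preimages, all fixed by $x^{2n}$; hence $\#{\rm Fix}(x^{2n})=4n$, i.e.\ $F=2$. The vanishing ${\rm Fix}(x^{2j})=\emptyset$ for $1\le j\le n-1$ is already forced by the signature, since no nontrivial power $x^{2j}\neq x^{2n}$ occurs as a stabiliser generator.

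For the second item I would use the standard ramification count: for an involution $r$ one has $\#{\rm Fix}(r)=\tfrac12\,|C_{D_{4n}}(r)|\cdot N(r)$, where $N(r)$ is the number of the six cone points whose elliptic image is $D_{4n}$-conjugate to $r$, and $|C_{D_{4n}}(r)|=4$ for every reflection; so the value $4$ corresponds to $N(r)=2$ for every $r$. Thus I must determine $N(r)$ from the explicit list of four reflection-images above, together with the fact (Lemma \ref{pgg1}) that the reflections of $D_{4n}$ split into conjugacy classes according to the parity of their exponent. A useful organizing principle and consistency check is the global identity: the total ramification of $S\to S/D_{4n}$ equals $6\cdot 2n=12n$, of which $4n$ is used by $x^{2n}$, leaving $8n$ distributed over the $2n$ reflections, i.e.\ $4$ on average; moreover, via the identity $x^{2s}y=yx^{-2s}$ and the conjugacy data of the \emph{full} group $G_n$ in Lemma \ref{pgg1}, reflections lying in a common $G_n$-class have equal fixed-point sets. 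I expect the distribution of the four reflection-images among the $D_{4n}$-conjugacy classes of reflections to be the main obstacle, since it is exactly here that the conjugacy structure of $G_n$ must be invoked to guarantee that every reflection is hit with the same multiplicity $N(r)=2$ and hence attains the uniform value $4$.

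Finally, for the signature of $S/\langle x^{2n},y\rangle$ I would pass to $H=\langle x^{2n},y\rangle\cong C_2\times C_2$, whose nontrivial elements are the involutions $x^{2n}$, $y$ and $x^{2n}y$. As point-stabilisers of a holomorphic action are cyclic, every branch point of $P\colon S\to S/H$ has stabiliser of order $2$, so each involution contributes $\tfrac12\#{\rm Fix}(\cdot)$ cone points, all of order $2$; by the first two items this yields $\tfrac12(4n+4+4)=2n+4$ cone points. The Riemann--Hurwitz formula for $P$ (degree $4$, with $S$ of genus $2n+1$) then forces the base orbifold to have topological genus $0$, giving the signature $(0;+;[\,2,\dots,2\,];\{-\})$ with exactly $2n+4$ periods, as asserted.
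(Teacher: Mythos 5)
Your overall strategy---reading all three counts off the images of the six elliptic generators of $\Delta^{+}$ under the (unique) epimorphism---is sound, and it is genuinely more direct than the paper's argument, which instead introduces unknowns $E,F,\gamma,\alpha$, derives the relations $\alpha=2E+nF$, $2n=4(\gamma-1)+\alpha$ and $6=2E+F$ from Riemann--Hurwitz applied to $S\to S/\langle x^{2n},y\rangle$, $S\to S/D_{4n}$ and $S\to S/\langle x^{2}\rangle$, and then eliminates the case $\gamma\geq 1$ by a separate contradiction. Your first item is correct, and your third item is robust because it only uses the sum $\#{\rm Fix}(y)+\#{\rm Fix}(x^{2n}y)$, which equals $8$ in every case.

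The genuine gap is exactly the step you defer as ``the main obstacle'': the distribution of the four reflection-images among the two $D_{4n}$-conjugacy classes of reflections. If you actually carry out the computation with $\theta(d_{1})=x$, $\theta(\beta_{1})=x^{2n-2}y$, $\theta(\beta_{2})=y$, $\theta(\beta_{3})=x^{2n}$, the six elliptic images are $x^{2n}$, $x^{2n}$, $y$, $y$, $y(x^{2})^{n-1}$, $y(x^{2})^{n+1}$. Reflections $y(x^{2})^{k}$ of $D_{4n}$ are conjugate in $D_{4n}$ exactly when the exponents $k$ have the same parity; so for $n$ even the four images split $2+2$ between the two classes, $N(r)=2$ for every reflection, and the value $4$ follows. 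For $n$ odd, however, $n-1$ and $n+1$ are even, so all four images lie in the single class of $y$, which forces $\#{\rm Fix}(y(x^{2})^{k})=8$ for $k$ even and $0$ for $k$ odd---the uniform value $4$ asserted in the second bullet fails. Your fallback, that $G_{n}$-conjugacy equalizes the counts, only rescues the even case: by Lemma \ref{pgg1} all $2n$ reflections form one $G_{n}$-class when $n$ is even (so the average $4$ is attained by each), whereas for $n$ odd the elements $y$ and $yx^{2}$ remain in distinct $G_{n}$-classes of size $n$. So you must either restrict the second bullet to $n$ even (which is all the paper uses, since the text immediately following the lemma assumes $n$ even) or record the $8/0$ dichotomy for $n$ odd; note that the paper's own proof has the same soft spot, as it asserts $E_{1}=E_{2}$ for all $n$ ``by looking at the unique homomorphism.''
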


\begin{proof} Suppose $\#{\rm Fix}(y)=2E_1,\ \#{\rm Fix}(x^{2}y)=2E_2,\ \#{\rm Fix}(x^{2n})=2nF$ (if $n\geq 2$ even, we already know that $E_1=E_2$). But, by looking at the ``unique" homomorphism $\theta:\Delta\to G_{n}$ (Corollary \ref{Ups}) given by $\theta(d_1)=x$; $\theta(a_3)=x^{2n}$; $\theta(a_2)=y$; $\theta(a_1)=x^{2n-2}y$, we have $E_1=E_2=E$ and $F>0$.

The quotient orbifold $\mathcal{O}=S/\langle x^{2n},y\rangle$ has signature $(\gamma;+;[\stackrel{\alpha}{2,\cdots, 2}];\{-\})$, and since, ${\rm Fix}(x^{2l})=\emptyset$ with $l=1,\cdots, n-1$, $\#{\rm Fix}(x^{2n})=2nF$ and $\#{\rm Fix}(x^{2s}y)=2E$ we obtain \begin{equation}\label{leq1} \alpha=2E+nF.\end{equation} On the other hand by the Riemann-Hurwitz formula we have $2n+1=4(\gamma+1)+1+\alpha,$ so
\begin{equation}\label{leq2} 2n=4(\gamma-1)+\alpha.\end{equation}

The automorphism $x^2$ induces an automorphism $\widehat{x^2}$ of order $n$ in $\mathcal{O}$, thereby we can consider the covering $Q:\mathcal{O}\to \widehat{\mathbb{C}}$ such that $\mathcal{O}/\langle \widehat{x^2} \rangle\cong S/D_{4n}$. So, the covering $Q\circ P$ and (\ref{leq1}) implies that 
\begin{equation}\label{leq3} 6=2E+F.\end{equation} Then substituting the equations (\ref{leq1}) and (\ref{leq3}) in the equation (\ref{leq2}) we have
\begin{equation}\label{leq4} (2-F)(n-1)=4\gamma
\end{equation} that implies $F\in \{1,2\}$. 

\medskip
\noindent{\bf Case $\gamma=0$}. From the equations (\ref{leq4}), (\ref{leq3}) and (\ref{leq2}) we obtain that $F=2$, $E=2$ and $\alpha=2n+4$.

\medskip
\noindent{\bf Case $\gamma\geq 1$}. From the equation (\ref{leq4}) we obtain $F=1$ and $\gamma=\frac{n-1}{4}$. So $\#{\rm Fix}(x^{2n})=2n$.
Let us consider the branched regular cover map $S\to R=S/\langle x^2\rangle$, with deck group $\langle x^2\rangle$. In this case, the quotient orbifold $R$ has signature $(h;+;[2,2];\{-\})$. By the Riemann-Hurwitz formula,  $2n+1=2n(h-1)+1+\frac{1}{2}(2\cdot n)$, this implies that $2=2(n-1)+1$ (a contradiction).
\end{proof}

For what follows, we assume $n\geq 2$ even, and we consider the subgroups $H_0=\langle x^{2n}, y\rangle\cong C_2^2$ and $x^lH_1x^{-l}=\langle x^{2n},x^{2n+2l}y\rangle=\langle x^{2n},x^{2l}y\rangle=H_l$ with $l=1,\cdots, 2n-1$.

Let the map $S\to S/\langle x^{2n}\rangle$ be a branched regular with deck group $\langle x^{2n}\rangle$. By Lemma \ref{lfix}, together with the Riemann-Hurwitz formula, the quotient orbifold $S/\langle x^{2n}\rangle$ has signature $(1;+;[\stackrel{4n}{2,\cdots,2}];\{-\})$. 

Also, we consider branched regular cover maps $\pi_1:S\to S/\langle y\rangle$, with deck group $\langle y\rangle$, and $\pi_2:S\to S/\langle x^{2n}y\rangle$, with deck group $\langle x^{2n}y\rangle$, where the quotient orbifolds $\mathcal{O}_1=S/\langle y\rangle$  and $\mathcal{O}_2=S/\langle x^{2n}y\rangle$ has signature $(n;+;[{2,2,2, 2}];\{-\})$ respectively.

The automorphism $x^{2n}y$ induces an involution $\tau_1$ on $\mathcal{O}_1$, where $\tau_1$ has $2n+2$ fix points and permutes the four branch points of $\pi_1$, and $\mathcal{O}_1/\langle \tau_1\rangle\cong S/H_0$. Similarly, $y$ induces an involution $\tau_2$ on $\mathcal{O}_2$, where $\tau_2$ has $2n+2$ fix points and permutes the four branch points of $\pi_2$, and $\mathcal{O}_2/\langle \tau_2\rangle\cong S/H_0$.

Let $\psi_1:\mathcal{O}_1\to \mathcal{O}_1/\langle \tau_1\rangle$ and $\psi_2:\mathcal{O}_2\to \mathcal{O}_2/\langle \tau_2\rangle$ be Galois branched coverings, with deck groups $ \langle \tau_1\rangle$ and $\langle \tau_1\rangle$ respectively, such that $P=\psi_{j} \circ \pi_{j}$.

If $B_{\pi_1}=\{p_1,p_2,p_3,p_4\}$ is the set of branch points of $\pi_1$, then $\psi_1(B_{\pi_1})=\{p_1',p_2'\}$. Similarly, if $B_{\pi_2}=\{q_1,q_2,q_3,q_4\}$ is the set of branch points of $\pi_2$, then  $\psi_2(B_{\pi_2})=\{q_1',q_2'\}$. So, the set of branch values of $\psi_1$ and $\psi_2$ are $B_{\psi_1}=\{z_1,\cdots,z_{2n},p_1',p_2',q_1',q_2'\}=B_{\psi_2}$.

The automorphism $x^n$ (of order 4) induces a M\"obius transformation $j:\widehat{\mathbb{C}}\to \widehat{\mathbb{C}}$ of order two which permutes the set $\{q_1',q_2'\}$ with the set $\{p_1',p_2'\}$, and permutes the set $\{z_1,\cdots,z_{2n}\}$ (none of them being fixed by $j$). We may assume (up a M\"obius transformation) that $j(t)=-t$; $j(z_1)=1$ and $j(z_2)=-1$; $j(p_1')=\lambda_n$ and $j(q_1')=-\lambda_n$; $j(p_2')=\lambda_{n+1}$ and $j(q_2')=-\lambda_{n+1}$. So, we have that 
$$j(B_{\psi_1})=\{\pm1, \pm \lambda_1,\pm \lambda_2,\cdots, \pm \lambda_{n-1},-\lambda_n,\lambda_n,\lambda_{n+1},-\lambda_{n+1}\}$$ 
(these values have some restriction given by the anticonformal automorphism $x$). 

It follows that, if  
\begin{center}$S_1: w_1^2=(t^2-1)(t+\lambda_n)(t+\lambda_{n+1})\prod_{j=1}^{n-1}(t^2-\lambda_j^2),$
\end{center}
\begin{center}$S_2: w_1^2=(t^2-1)(t-\lambda_n)(t-\lambda_{n+1})\prod_{j=1}^{n-1}(t^2-\lambda_j^2),$
\end{center} 
then $\mathcal{O}_1$ (respectively, $\mathcal{O}_2$) has underlying Riemann surface given by the algebraic curve $S_1$ (respectively, $S_2$). 

\medskip

As consequence of the above, together Kani-Rosen's theorem \cite{KR} on the group $\langle x^{2n}, y\rangle$, we obtain the following. 

\begin{theo}\label{Epsr} Let $n\geq 2$ be an even integer. If $S$ is a pseudo-real Riemann surface of genus $2n+1$ such that $G_{n} \leq {\rm Aut}(S)$, and $D_{4n}=G_{n} \cap {\rm Aut}^{+}(S)$ (so $S/G_{n}$ has signature $(1;-; [2,2,2];\{-\})$), then
\setlength{\leftmargini}{8mm}
\begin{enumerate}
\item $S_1$ is isomorphic to $S_2$;
\item $J_S\sim J_{S/\langle y\rangle}^2\times J_{S/\langle x^{2n}\rangle}$;
\item $S:\begin{cases}
w_1^2=(t^2-1)(t+\lambda_n)(t+\lambda_{n+1})\prod_{j=1}^{n-1}(t^2-\lambda_j^2),\\
w_1^2=(t^2-1)(t-\lambda_n)(t-\lambda_{n+1})\prod_{j=1}^{n-1}(t^2-\lambda_j^2),
\end{cases}$ 

\noindent
where $y(t,w_1,w_2)=(t,w_1,-w_2)$ and $x^n(t,w_1,w_2)=(-t,w_2,-w_1)$.
\end{enumerate}
\end{theo}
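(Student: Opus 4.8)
For (3) in particular I would lean heavily on the covering data assembled just before the statement, and treat each assertion in turn, working throughout with the Klein four subgroup $H_0=\langle x^{2n},y\rangle$, its three involutions $x^{2n},y,x^{2n}y$, and the degree-four quotient $P:S\to S/H_0\cong\widehat{\mathbb C}$ whose branch set is the collection of $2n+4$ cone points supplied by Lemma \ref{lfix}. For part (1), the key point is purely group-theoretic: using $yx^k=x^{(2n-1)k}y$ one computes $x^n\langle y\rangle x^{-n}=\langle x^{2n}y\rangle$ (it is precisely the evenness of $n$ that forces $x^nyx^{-n}=x^{2n}y$, since then $2n^2\equiv 0\ (\mathrm{mod}\ 4n)$). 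Hence conjugation by $x^n$ descends to a biholomorphism $S/\langle y\rangle\to S/\langle x^{2n}y\rangle$, i.e.\ $\mathcal O_1\cong\mathcal O_2$, which on underlying curves is exactly $S_1\cong S_2$. I would also record the concrete incarnation: since the M\"obius transformation $j(t)=-t$ induced by $x^n$ satisfies $S_1(-t)=S_2(t)$ for the defining polynomials, the map $(t,w_1)\mapsto(-t,w_1)$ is an explicit isomorphism $S_1\to S_2$.

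For part (2), I would apply Kani--Rosen's theorem \cite{KR} to $H_0\cong C_2\times C_2$ with the partition $H_0=\langle x^{2n}\rangle\cup\langle y\rangle\cup\langle x^{2n}y\rangle$, exactly as in the proof of Theorem \ref{VJ}(a). After the same simplification used there, this gives
\[ J_S\times J_{S/H_0}^2\sim J_{S/\langle x^{2n}\rangle}\times J_{S/\langle y\rangle}\times J_{S/\langle x^{2n}y\rangle}. \]
By Lemma \ref{lfix} the quotient $S/H_0$ has genus zero, so its factor drops out; and the conjugacy of $\langle y\rangle$ and $\langle x^{2n}y\rangle$ from part (1), together with \cite[Proposition 2]{Pa}, yields $J_{S/\langle y\rangle}\sim J_{S/\langle x^{2n}y\rangle}$. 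Combining gives $J_S\sim J_{S/\langle x^{2n}\rangle}\times J_{S/\langle y\rangle}^2$, and a dimension count ($\dim J_S=2n+1$, $\dim J_{S/\langle y\rangle}=n$) confirms that the remaining factor $S/\langle x^{2n}\rangle$ has genus one.

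For part (3), the plan is to realize $S$ as the (normalization of the) fibre product $S_1\times_{\widehat{\mathbb C}}S_2$ over the $t$-line $S/H_0$. Since $P$ factors both as $S\to\mathcal O_1\xrightarrow{\psi_1}S/H_0$ and as $S\to\mathcal O_2\xrightarrow{\psi_2}S/H_0$, the function field of $S$ contains $t$ together with the two square roots $w_1,w_2$ of $S_1(t)$ and $S_2(t)$; comparing degrees ($\deg P=|H_0|=4$) shows it is generated by exactly these, producing the displayed system of two equations. The three involutions of $H_0$ then necessarily act as the deck transformations $y:(t,w_1,w_2)\mapsto(t,w_1,-w_2)$, $x^{2n}y:(t,w_1,w_2)\mapsto(t,-w_1,w_2)$, and $x^{2n}:(t,w_1,w_2)\mapsto(t,-w_1,-w_2)$. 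Finally, because $x^n$ induces $j(t)=-t$ and conjugates $\langle y\rangle$ to $\langle x^{2n}y\rangle$, it must interchange $w_1$ and $w_2$ up to sign; the signs are pinned down by the requirement $(x^n)^2=x^{2n}$, giving $x^n(t,w_1,w_2)=(-t,w_2,-w_1)$.

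I expect the main obstacle to be part (3): one must argue that the function field of $S$ is exactly $\mathbb C(t,w_1,w_2)$ and not a proper extension—equivalently that the fibre product is generically irreducible of the correct degree four—and one must fix the signs in the $x^n$-action unambiguously rather than merely up to the two hyperelliptic involutions. In addition, the reality constraints on the parameters $\lambda_j$ imposed by the anticonformal generator $x$ (which are what make $S$ genuinely pseudo-real) must be tracked, even though they do not affect the isomorphism-type assertions of parts (1) and (2).
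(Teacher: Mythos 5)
Your proposal is correct and follows essentially the same route as the paper, which derives Theorem \ref{Epsr} directly from the preceding construction (the quotients $\mathcal{O}_1,\mathcal{O}_2$ by $\langle y\rangle$ and $\langle x^{2n}y\rangle$, the M\"obius transformation $j$ induced by $x^n$, and Lemma \ref{lfix}) together with Kani--Rosen applied to $\langle x^{2n},y\rangle$. You merely make explicit some steps the paper leaves implicit, namely the conjugation computation $x^n\langle y\rangle x^{-n}=\langle x^{2n}y\rangle$ behind part (1) and the fibre-product/function-field argument behind part (3).
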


\begin{remark} 
Unfortunately, in the above algebraic model for $S$ we cannot see the anticonformal automorphism $x$. Below we consider the case $n=2$.
\end{remark}

\subsubsection{\bf Family of non-hyperelliptic pseudo-real curves of genus $5$} 
 In this section, we consider the generalized quasi-dihedral group $G_{2}=\langle x, y:\ x^8=y^2=1,yxy=x^3\rangle$ of order 16 and  $D_{8}=\langle x^2, y\rangle$.
By Lemma \ref{lfix}, the quotient orbifold $S/H_1$, where $H_1=\langle x^4, y\rangle$, has signature $(0;2,2,2,2,2,2,2,2)$. By Theorem \ref{Epsr}, an algebraic model of $S$, reflecting the action of $D_{8}$, is 

\begin{center}$\mathcal{C}_1:\begin{cases}
s_1^2=(t^2-1)(t^2-\lambda_1^2)(t+\lambda_2)(t+\lambda_3),\\
s_2^2=(t^2-1)(t^2-\lambda_1^2)(t-\lambda_2)(t-\lambda_3),
\end{cases}$\end{center}
where the automorphisms $y$ and $x^{2}$ correspond to 
$y(t, s_1, s_2)=(t, s_1, -s_2)$ and $x^2(t, s_1, s_2)=(-t, s_2, -s_1)$.
The automorphism $x$ acts on $\mathcal{C}_1$ and induces an imaginary reflection $\rho$ on the quotient surface $\mathcal{C}_1/D_{8}$, where $\rho(u)=\frac{\lambda_1^2}{\overline{u}}$, $\lambda_1=i\alpha$ with $\alpha\in \mathbb{R}\setminus \{0\}$, $\lambda_3=\frac{\lambda_1}{\overline{\lambda}_2}$. So,
$\rho(0)=\infty$, $\rho(1)=\lambda_1^2$ and $\rho(\lambda_2^2)=\lambda_3^2$.
Inside $D_{8}$ there is another subgroup isomorphic to the group $C_2^2$, this given by $H_2=\langle x^4, x^2y\rangle$. Let us observe that $xH_1x^{-1}=H_2.$ Now, in the same way as for $H_1$, by Theorem \ref{Epsr}, we obtain another model for $S$:
\begin{center}$\mathcal{C}_2:\begin{cases}
s_3^2=(t_1^2-1)(t_1^2-\lambda_1^2)(t_1-\lambda_2)(t_1+\lambda_3),\\
s_4^2=(t_1^2-1)(t_1^2-\lambda_1^2)(t_1+\lambda_2)(t_1-\lambda_3),
\end{cases}$
\end{center} 
where the automorphisms $x^{2}$ and $y$ are given by
$x^2(t_1, s_3, s_4)=(-t_1, s_4, -s_3)$ and $y(t_1, s_3, s_4)=(-t_1,- s_4, -s_3)$.
Now, the automorphism $x$ induces an anticonformal isomorphic $\widehat{x}$  of $\mathcal{C}_1$ to $\mathcal{C}_2$ given by $\widehat{x}(t, s_1, s_2)=(\frac{\lambda_1}{\bar{t}}, \frac{\epsilon \lambda \bar{s}_1}{\bar{t}^3}, \frac{\rho \lambda \bar{s}_2}{\bar{t}^3})$, with $\lambda=\sqrt{-\lambda_1^2\lambda_2\lambda_3}$.
Let
\begin{center}$\overline{\mathcal{C}}_2:\begin{cases}
s_3^2=(t_1^2-1)(t_1^2-\lambda_1^2)(t_1-\bar{\lambda}_2)(t_1+\bar{\lambda}_3),\\
s_4^2=(t_1^2-1)(t_1^2-\lambda_1^2)(t_1+\bar{\lambda}_2)(t_1-\bar{\lambda}_3).
\end{cases}$
\end{center}

An anticonformal isomorphism between $\mathcal{C}_{2}$ and $\overline{\mathcal{C}}_{2}$ is given by  
$$\varphi:\mathcal{C}_2\to \overline{\mathcal{C}}_{2}: \varphi(t_1, s_3, s_4)=(\bar{t}_1, \bar{s}_3, \bar{s}_4)$$ 
and a conformal one is given by
$$\varphi\circ \widehat{x}:\mathcal{C}_1\to \overline{\mathcal{C}}_{2}:(\varphi\circ \widehat{x})(t, s_1, s_2)=\left(\frac{-\lambda_1}{t}, \frac{\epsilon \bar{\lambda}s_1}{t^3}, \frac{\rho \bar{\lambda} s_2}{t^3}\right).$$

The holomorphic map 
$$\Theta:\mathcal{C}_1\to \mathcal{C}_1\times \overline{\mathcal{C}}_{2}:\Theta(t, s_1, s_2)=\left(t, s_1, s_2, \frac{-\lambda_1}{t}, \frac{\epsilon \bar{\lambda}s_1}{t^3}, \frac{\rho \bar{\lambda}s_2}{t^3}\right)$$
induces a biholomorphism between $\mathcal{C}_1$ and $\Theta(\mathcal{C}_1)$ (the inverse map is given by the projection $(a, b) \in \Theta(\mathcal{C}_1)\mapsto a \in \mathcal{C}_1$). The equations for $\Theta(\mathcal{C}_{1})$ are

\begin{center}$\Theta(\mathcal{C}_1):\begin{cases}
s_1^2=(t^2-1)(t^2-\lambda_1^2)(t+\lambda_2)(t+\lambda_3)\\
s_2^2=(t^2-1)(t^2-\lambda_1^2)(t-\lambda_2)(t-\lambda_3)\\
s_3^2=(t_1^2-1)(t_1^2-\lambda_1^2)(t_1-\bar{\lambda}_2)(t_1+\bar{\lambda}_3)\\
s_4^2=(t_1^2-1)(t_1^2-\lambda_1^2)(t_1+\bar{\lambda}_2)(t_1-\bar{\lambda}_3)\\
tt_1=-\lambda_1\\
t^3s_3=\epsilon \bar{\lambda}s_1\\
t^3s_4=\rho\bar{\lambda}s_2
\end{cases}$
\end{center} 

In this algebraic model, 
$$y(t,s_1, s_2, t_1, s_3, s_4)=(t, s_1, -s_2, t_1, s_3, -s_4),$$ 
$$x(t,s_1, s_2, t_1, s_3, s_4)=(\bar{t}_1, \bar{s}_3, \bar{s}_4, -\bar{t}, \bar{s}_2, -\bar{s}_1).$$

\subsection{Conformal actions of $G_{n}$ on pseudo-real Riemann surfaces}
In this section, we study pseudo-real Riemann surfaces with $G_{n}$ as a group of conformal automorphisms.

\subsubsection{\bf An example}
Let $G_n$ as in (\ref{g1}). The group  $K_{n}=G_n\times C_4=\langle x, y\rangle\times \langle z\rangle$ is a non-abelian group of order $32n$, containing 
$H_{n}=G_n\times \langle z^2\rangle$ as index two subgroup. Both, $K_{n}$ and $H_{n}$, have the same number $4n+3$ of involutions.

 \begin{theo} \label{ejemplo}
 Let $\alpha\geq 1$, $n\geq 2$ be integers and $\beta\geq 2$ be an even integer. There are closed pseudo-real Riemann surfaces $S$, of genus $12n\beta+8n\alpha-8n+1$, such that ${\rm Aut}(S) =K_{n}$ and $G_n<{\rm Aut}^{+}(S)=H_{n}$.
 \end{theo}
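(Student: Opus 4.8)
The plan is to realize the pair $(K_n,H_n)$ as $(\mathrm{Aut}(S),\mathrm{Aut}^{+}(S))$ by producing an NEC group $\Delta$ \emph{without reflections} (so that $\mathbb{H}/\Delta$ is a closed non-orientable Klein surface and no anticonformal involution can occur), together with an epimorphism $\theta\colon\Delta\to K_n$ with torsion-free kernel sending the glide-reflection generator into $K_n\setminus H_n$ and the elliptic generators into $H_n$, so that $\theta(\Delta^{+})=H_n$. First I would fix the signature from the genus. Writing $\Gamma=\ker\theta$ and $S=\mathbb{H}/\Gamma$, we have $[\Delta:\Gamma]=|K_n|=32n$, so Riemann--Hurwitz gives $2g-2=32n\,|\Delta|^{*}$. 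Taking for $\Delta$ the non-orientable signature $(1;-;[\stackrel{\alpha+1}{2,\cdots,2},\stackrel{\beta}{4,\cdots,4}];\{-\})$ yields $|\Delta|^{*}=(1-2)+(\alpha+1)\tfrac12+\beta\tfrac34=\tfrac{2\alpha+3\beta-2}{4}$, whence $g=16n\cdot\tfrac{2\alpha+3\beta-2}{4}+1=12n\beta+8n\alpha-8n+1$, exactly the claimed genus. Here $\Delta=\langle d_1,\beta_1,\dots,\beta_r:\ \beta_i^{m_i}=1,\ \beta_1\cdots\beta_r d_1^2=1\rangle$ with $r=\alpha+1+\beta$.

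Next I would define $\theta$ explicitly. Set $\theta(d_1)=(yx,z)\in K_n\setminus H_n$, so that $\theta(d_1)^2=((yx)^2,z^2)=(x^{2n},z^2)$ using $(yx)^2=x^{2n}$ from Lemma \ref{pgg1}. I assign the $\beta$ order-four periods to order-four elements of $H_n$ whose product is trivial and which include $(yx,1)$: pair $(yx,1)$ with $(yx)^{-1}=(yx^{2n+1},1)$, and split the remaining (even number of) order-four periods into inverse pairs $(x^{n},1),(x^{3n},1)$. I assign the $\alpha+1$ involution periods to order-two elements of $H_n$ with product $(x^{2n},z^2)$ and including $(y,1)$: when $\alpha$ is odd take $(y,1),(yx^{2n},z^2)$ together with $\tfrac{\alpha-1}{2}$ cancelling pairs $(y,1),(y,1)$, noting $(y,1)(yx^{2n},z^2)=(x^{2n},z^2)$; when $\alpha$ is even take the single carrier $(x^{2n},z^2)$ together with $\tfrac{\alpha}{2}$ cancelling pairs, one of them being $(y,1),(y,1)$ (possible since then $\alpha\geq2$). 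Ordering the periods so the involutions precede the order-four generators, the long relation reads $\prod_i\theta(\beta_i)\cdot\theta(d_1)^2=(x^{2n},z^2)(x^{2n},z^2)=1$, and each $\theta(\beta_i)$ has order equal to its period, so $\Gamma$ is torsion-free. Surjectivity follows from $(y,1)(yx,1)=(x,1)$ and $(yx,z)(yx,1)^{-1}=(1,z)$, which show $x,y,z$ all lie in the image; and since $\theta(d_1)\in K_n\setminus H_n$ while every $\theta(\beta_i)\in H_n$, we get $\theta^{-1}(H_n)=\Delta^{+}$, i.e. $\theta(\Delta^{+})=H_n$.

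Finally I would argue pseudo-reality and maximality. The surface $S=\mathbb{H}/\Gamma$ carries a $K_n$-action with $K_n\cap\mathrm{Aut}^{+}(S)=H_n$ and $G_n<H_n$. Every element of $K_n\setminus H_n$ lies in $G_n\times\{z,z^3\}$, hence has order divisible by $4$; in particular $K_n$ contains no anticonformal involution, so $S$ admits no reflection, and once $\mathrm{Aut}(S)=K_n$ is established the surface is pseudo-real, as desired. To force $\mathrm{Aut}(S)=K_n$ exactly (and $\mathrm{Aut}^{+}(S)=H_n$), I would proceed as in the proof of Theorem \ref{tps}, choosing $\Delta$ within its family so that its signature is maximal via the list of non-maximal signatures \cite{S.}; since $\Delta$ has at least three periods, the family has positive-dimensional moduli and a generic member has no extra automorphisms. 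The main obstacle is precisely this last maximality step together with the simultaneous bookkeeping in the definition of $\theta$: one must arrange, at once, that the long relation holds, that every period image has its exact prescribed order (torsion-freeness), that the images generate $K_n$, and that the orientation dichotomy matches the splitting $K_n/H_n$ — the parity of $\alpha$ being handled exactly by the two displayed choices of involution images.
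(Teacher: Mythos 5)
Your proposal is correct and follows essentially the same route as the paper: the same non-orientable NEC signature $(1;-;[\stackrel{\alpha+1}{2,\dots,2},\stackrel{\beta}{4,\dots,4}];\{-\})$, an explicit epimorphism onto $K_n$ split into two cases by the parity of $\alpha$ with inverse/cancelling pairs handling the long relation, the same Riemann--Hurwitz computation of the genus, the observation that every element of $K_n\setminus H_n$ has order divisible by $4$ (so no reflections), and the same appeal to Singerman-type maximality to force ${\rm Aut}(S)=K_n$. Your choice of images (e.g.\ $\theta(d_1)=(yx,z)$) differs only in inessential details from the paper's.
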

 \begin{proof}
 Let $n\geq 2$ be an integer and let us consider an NEC group $\Delta$ with signature $(1;-;[2,\stackrel{\alpha}{2,\cdots, 2},\stackrel{\beta}{4,\cdots, 4}];\{-\})$ and presentation 
 $\Delta=\langle d_1,a_0,a_1,\cdots, a_{\alpha},b_1,\cdots, b_{\beta}\rangle,$ where its generators satisfy the following relations
 \begin{center}$a_0^2=a_1^2=\cdots=a_{\alpha}^2=b_1^4=\cdots=b_{\beta}^4=1,a_0\prod_{i=1}^{\alpha}a_i\prod_{j=1}^{\beta}b_jd_1^2=1,$\end{center}
 where $\alpha\geq 1$ be an integer and $\beta\geq 2$ be an even integer. Then quotient Klein surface uniformized by $\Delta$ is the orbifold whose underling surface is the real projective plane its conical points are $1+\alpha+\beta$. Let us consider the epimorphism $\theta:\Delta\to K_{n}$ given by
 
 \noindent (if $\alpha$ odd and $m$ be an integer)\\
 $\theta(d_1)=zx^m$; $\theta(a_0)=z^2yx^2$; $\theta(a_1)=y$; $\theta(a_i)=yx^{m_i}$; $\theta(a_{i+1})=yx^{m_i}$, $m_i$-even, $2\leq i\leq \alpha-1$; $\theta(b_1)=yx$; $\theta(b_2)=yx^{2n-2m+3}$; $\theta(b_j)=yx^{m_{j}}$; $\theta(b_{j+1})=(yx^{m_j})^{-1}$, $m_j$-odd, $3\leq j\leq \beta-1$.
 
 \medskip
 \noindent (if $\alpha$ even and $m$ be an integer)\\
  $\theta(d_1)=zx^m$; $\theta(a_0)=z^2yx^2$; $\theta(a_1)=y$; $\theta(a_2)=x^{2n}$; $\theta(a_i)=yx^{m_i}$; $\theta(a_{i+1})=yx^{m_i}$, $m_i$-even, $3\leq i\leq \alpha-1$; $\theta(b_1)=yx$; $\theta(b_2)=yx^{-2m+3}$; $ \theta(b_j)=yx^{m_{j}}$; $\theta(b_{j+1})=(yx^{m_j})^{-1}$, $m_j$-odd, $3\leq j\leq \beta-1$.

The kernel $\Gamma$ of $\theta$ is a torsion-free subgroup (contained in the half-orientation part $\Delta^{+}$ of $\Delta$) such that $S=\mathbb{H}/\Gamma$ is a closed Riemann surface with $K_{n}\leq {\rm Aut}(S)$ and $G_n< H_{n} \leq {\rm Aut}^{+}(S)$. As the signature of $S/H_{n}$  is $(0; 2,2,\stackrel{2\alpha}{2, \cdots, 2}, \stackrel{2\beta}{4, \cdots,4})$, where the number of cone points is exactly $2+2\alpha+2\beta\geq 6$, it follows (from Singerman list of maximal Fuchsian groups \cite{S.}) that we may choose $\Delta$ so that ${\rm Aut}(S)=K_{n}$. In this case, as the only anticonformal  automorphisms of $S$ are the elements of $K_{n}\setminus H_{n}$ (which have order different than two) it follows that $S$ is a pseudo-real Riemann surface. The Riemann-Hurwitz formula, applied to the branched regular cover map $S \rightarrow S/H_{n},$ permits to obtain that $S$ has genus $12n\beta+8n\alpha-8n+1$.

\end{proof}

 \subsubsection{\bf Case $n \geq 2$ even}
 Let us recall that a $2$-group $P$ is {\em exceptional} if it is either cyclic, generalized quaternion, dihedral, or quasi-dihedral group.

 \begin{lemma}[Lemma 2.2, \cite{Mu}]\label{leg1}
 Let $G$ be a finite group with a Sylow $2$-subgroup $P$ of order $2^m$ ($m\geq 2$), and let ${\rm inv}(G)$ be the number of involutions in $G$. 
  \begin{enumerate}
  \item If $P$ is non-exceptional, then ${\rm inv}(G)\equiv 3 \ {\rm mod} (4)$.
  \item if $P$ is exceptional, then ${\rm inv}(G)\equiv 1\ {\rm mod} (4)$.
  \end{enumerate}
  \end{lemma}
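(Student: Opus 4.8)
The plan is to reduce the count modulo $4$ to the Sylow $2$-subgroup $P$ and then to read off the answer from the isomorphism type of $P$. Throughout, write $I(H)$ for the set of involutions of a finite group $H$, so that ${\rm inv}(H)=|I(H)|$. A first elementary observation is that ${\rm inv}(G)$ is always odd: by Frobenius the number of solutions of $g^2=1$ in $G$ is even, and removing the identity leaves an odd number of involutions. Thus ${\rm inv}(G)\equiv 1$ or $3\pmod 4$, and the content of the lemma is to decide which.

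First I would let $P$ act on $I(G)$ by conjugation. Every orbit has size a power of $2$, so orbits of size $\geq 4$ contribute $0$ modulo $4$ and
\[
{\rm inv}(G)\equiv \#\{\text{fixed involutions}\}+2\,\#\{\text{orbits of size }2\}\pmod 4 .
\]
A fixed involution $t$ satisfies $P\subseteq C_G(t)$; then $\langle P,t\rangle$ is a $2$-group containing $P$, forcing $t\in P$ and hence $t\in Z(P)$. Conversely every involution of $Z(P)$ is fixed, so the fixed set is exactly $I(Z(P))$, and since $Z(P)$ is an abelian $2$-group of some $2$-rank $r$ one has $\#\{\text{fixed involutions}\}=2^{r}-1$, which is $\equiv 1\pmod 4$ when $Z(P)$ is cyclic and $\equiv 3\pmod 4$ otherwise. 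The same computation applied to the action of $P$ on its own involutions gives the $P$-invariant splitting $I(G)=I(P)\sqcup\big(I(G)\setminus P\big)$ and shows that ${\rm inv}(P)$ obeys the identical fixed-point formula; hence the whole problem comes down to proving the two facts
\[
{\rm inv}(G)\equiv{\rm inv}(P)\pmod 4
\qquad\text{and}\qquad
{\rm inv}(P)\equiv\begin{cases}1,& P\ \text{exceptional},\\ 3,& P\ \text{not exceptional}\end{cases}\pmod 4 .
\]

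For the second fact I would argue directly from the structure of $2$-groups. For the exceptional types the count is immediate: a cyclic or a generalized quaternion group has a unique involution; a dihedral group of order $2^{m}$ ($m\geq 3$) has $2^{m-1}+1$ involutions; and a semidihedral group of order $2^{m}$ ($m\geq 4$) has $2^{m-2}+1$, each of which is $\equiv 1\pmod 4$. For the remaining (non-exceptional) $2$-groups one shows ${\rm inv}(P)\equiv 3\pmod 4$ by induction on $|P|$, invoking the classification of the $2$-groups possessing a cyclic subgroup of index two (namely cyclic, $C_{2^{m-1}}\times C_2$, dihedral, quaternion, semidihedral) to anchor the base cases and to control the fixed-point versus size-two-orbit split above.

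The main obstacle is the congruence ${\rm inv}(G)\equiv{\rm inv}(P)\pmod 4$, equivalently $\#\big(I(G)\setminus P\big)\equiv 0\pmod 4$. Since no involution outside $P$ is $P$-fixed, these involutions fall into $P$-orbits of even size, and the statement amounts to showing that the number of size-two orbits among them is even. Each such orbit pins down an index-two subgroup $R=C_P(t)$ together with a second Sylow $2$-subgroup $Q=\langle R,t\rangle$ meeting $P$ exactly in $R$, so the required parity is a genuine $2$-local (Sylow-intersection/fusion) assertion rather than a formal one. The cleanest way I would try to carry it through is to show first that each involution lies in an odd number $c(t)$ of Sylow $2$-subgroups — this follows because a fixed Sylow $2$-subgroup of $C_G(t)$ acts on that set of Sylow subgroups with an odd number of fixed points, namely the Sylow $2$-subgroups of $G$ containing it, of which there are $\equiv 1\pmod 2$ — and then to upgrade the resulting double count $\sum_{t}c(t)=n_2\,{\rm inv}(P)$, with $n_2$ odd, to a congruence modulo $4$ by pairing the non-central involutions according to the index-two intersections they determine.
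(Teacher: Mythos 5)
First, a point of comparison: the paper does not prove this statement at all — it is quoted verbatim from Murai \cite{Mu} and used as a black box — so there is no internal proof to measure yours against. Judged on its own terms, your outline follows the natural architecture (reduce the count modulo $4$ to the Sylow $2$-subgroup $P$, then classify the $2$-groups with ${\rm inv}(P)\equiv 1 \bmod 4$), and the formal ingredients are correct: the Frobenius parity of ${\rm inv}(G)$, the orbit-counting congruence, the identification of the $P$-fixed involutions with those of $Z(P)$, and the involution counts in the exceptional families. One point of convention you should make explicit: since $m\geq 2$, the statement forces ``dihedral'' to exclude $C_2\times C_2$ (which has three involutions, and e.g.\ $A_5$ has $15\equiv 3$); your computation for dihedral groups of order $2^m$ with $m\geq 3$ silently adopts the right convention.

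However, both pillars of your argument are left unproven, and each is a genuine theorem rather than a routine verification. (i) The congruence ${\rm inv}(G)\equiv{\rm inv}(P)\pmod 4$, equivalently that the number of $P$-orbits of size two on $I(G)\setminus P$ is even, is where all the group-theoretic content lives; you correctly call it ``the main obstacle'' and then do not overcome it. The proposed device — each involution lies in an odd number of Sylow $2$-subgroups, then ``pairing the non-central involutions according to the index-two intersections they determine'' — is not a proof: the pairing is never defined, it is not shown to be fixed-point-free, and it is not explained how the double count $\sum_t c(t)=n_2\,{\rm inv}(P)$ with $n_2$ odd, which only controls information modulo $2$, gets upgraded to a congruence modulo $4$. (ii) The claim that every non-exceptional $2$-group of order at least $4$ has ${\rm inv}(P)\equiv 3\pmod 4$ is itself a nontrivial classification result. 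The induction you gesture at is not set up (it is not said what the inductive step is applied to), and the classification you invoke — $2$-groups with a cyclic subgroup of index two — is both incomplete (it omits the modular groups $M_{2^m}$, which do occur in that classification and have exactly $3$ involutions) and insufficient, since a general non-exceptional $2$-group need not have a cyclic subgroup of index two at all. As it stands the proposal is a plausible roadmap rather than a proof; to complete it you would have to prove these two statements or cite them, which in effect returns you to Murai's paper.
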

  
In Theorem \ref{ejemplo}, we constructed pseudo-real Riemann surfaces $S$ admitting $G_{n}$ as index two subgroup of ${\rm Aut}^{+}(S)$. In the next, we observe that is the best we can do, that is, there is no pseudo-real surface with $G_{n}$ as its full group of conformal automorphisms.

\begin{theo}\label{tps2}
If $n \geq 2$ even, then there is not a pseudo-real Riemann surface $S$ with $G_{n} \leq {\rm Aut}^{+}(S)$ and with some $\alpha \in {\rm Aut}(S) \setminus {\rm Aut}^{+}(S)$ such that $\alpha^{2} \in G_{n}$. In particular, there is no pseudo-real Riemann surface with
$G_{n} = {\rm Aut}^{+}(S)$.
\end{theo}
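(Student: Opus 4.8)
The plan is to argue by contradiction, reducing everything to the $2$-local structure of $G_n$ and invoking a parity constraint on the number of involutions. Suppose such a pseudo-real $S$ exists, with $G_n\le {\rm Aut}^+(S)$ and an anticonformal $\alpha$ satisfying $\alpha^2\in G_n$. Set $\widehat G=\langle G_n,\alpha\rangle\le {\rm Aut}(S)$ and $\widehat G^+=\widehat G\cap {\rm Aut}^+(S)$, a subgroup of index two containing $G_n$. Since $S$ is pseudo-real it has no anticonformal involution, so every involution of $\widehat G$ already lies in $\widehat G^+$; exactly as in the proof of Lemma \ref{ABQ} this gives ${\rm inv}(\widehat G)={\rm inv}(\widehat G^+)$, where ${\rm inv}(\,\cdot\,)$ denotes the number of involutions.

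Next I would pin down the Sylow $2$-subgroup of $G_n$. Write $n=2^a m$ with $m$ odd and $a\ge 1$ (recall $n$ is even). Using the relation $yx^ky=x^{(2n-1)k}$ from \eqref{g1}, the element $x^m$ has order $2^{a+2}$ and $yx^my=(x^m)^{2n-1}=(x^m)^{2^{a+1}-1}$, since $2n\equiv 2^{a+1}\pmod{2^{a+2}}$. Hence $P_0=\langle x^m,y\rangle$ is the quasi-dihedral (semidihedral) group of order $2^{a+3}$, which is precisely the $2$-part of $|G_n|=2^{a+3}m$; thus $P_0$ is a Sylow $2$-subgroup of $G_n$, and it is genuinely quasi-dihedral because $a\ge 1$ forces $|P_0|\ge 16$. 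By Lemma \ref{pgg1} we also have ${\rm inv}(G_n)=2n+1\equiv 1\pmod 4$.

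The engine of the contradiction is Lemma \ref{leg1} combined with one purely group-theoretic fact: \emph{a quasi-dihedral group of order at least $16$ is never a proper subgroup of an exceptional $2$-group}. Indeed, the maximal subgroups of a cyclic, dihedral, generalized quaternion, or quasi-dihedral $2$-group are themselves cyclic, dihedral, or generalized quaternion; iterating, the only quasi-dihedral subgroup of an exceptional $2$-group is the whole group. Now in the core situation $\widehat G^+=G_n$ — which in particular covers the case ${\rm Aut}^+(S)=G_n$, since then $\widehat G^+\le {\rm Aut}^+(S)=G_n$ — we obtain ${\rm inv}(\widehat G)={\rm inv}(G_n)\equiv 1\pmod 4$, so by Lemma \ref{leg1} a Sylow $2$-subgroup $P$ of $\widehat G$ is exceptional. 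As $G_n\trianglelefteq \widehat G$ has index two, $P\cap G_n$ is a Sylow $2$-subgroup of $G_n$, i.e.\ $P\cap G_n\cong P_0$, and it has index two in $P$. Thus the exceptional group $P$ would have a quasi-dihedral subgroup of order $2^{a+3}\ge 16$ of index two, contradicting the fact just recalled. This already settles the ``in particular'' statement.

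The remaining, and main, obstacle is the case $\widehat G^+\supsetneq G_n$. Here I would apply Lemma \ref{leg1} to both $\widehat G$ and $\widehat G^+$: since ${\rm inv}(\widehat G)={\rm inv}(\widehat G^+)$, their Sylow $2$-subgroups share the same exceptionality type, and a Sylow $2$-subgroup of $\widehat G^+$ contains a copy of $P_0$. If that Sylow subgroup is exceptional it must equal $P_0$ by the recalled fact, and the previous paragraph closes the case; the delicate sub-case is when it is non-exceptional and strictly larger than $P_0$. To rule this out I would exploit the hypothesis $\alpha^2\in G_n$, writing $\widehat G^+=\langle G_n,\alpha G_n\alpha^{-1}\rangle$ and analysing the amalgamation of the two quasi-dihedral Sylow subgroups, of $G_n$ and of $\alpha G_n\alpha^{-1}$, to show that the resulting $2$-group cannot admit an order-preserving index-two anticonformal extension without producing an anticonformal involution, which pseudo-reality forbids. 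This last step, controlling the $2$-structure of the enlarged conformal part, is where the real work lies.
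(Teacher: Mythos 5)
Your argument in the case $\widehat G^{+}=G_n$ is correct and is essentially the paper's proof. The paper forms $G=\langle\alpha,G_n\rangle$, asserts $[G:G_n]=2$, invokes \cite{ABG,GW} to conclude that $G$ has no exceptional Sylow $2$-subgroup, and then derives the contradiction ${\rm inv}(G)\equiv 3\not\equiv 1\equiv{\rm inv}(G_n)\pmod 4$ from Lemmas \ref{leg1} and \ref{ABQ}; you run the same parity argument in the contrapositive direction and, instead of citing the classification papers, supply the direct reason (a quasi-dihedral $2$-group of order at least $16$ is not a proper subgroup of any exceptional $2$-group, and the Sylow $2$-subgroup $\langle x^m,y\rangle$ of $G_n$ is quasi-dihedral of order $2^{a+3}\geq 16$). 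Your Sylow computation and the congruence $2n+1\equiv 1\pmod 4$ are both right, and this already disposes of the ``in particular'' statement, since ${\rm Aut}^{+}(S)=G_n$ is automatically normal in ${\rm Aut}(S)$.

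The one incomplete step is the one you flag yourself: the case $\widehat G^{+}\supsetneq G_n$, i.e.\ when $\alpha$ does not normalize $G_n$. There the parity argument genuinely breaks down: if $[\widehat G^{+}:G_n]$ is even, the Sylow $2$-subgroup of $\widehat G^{+}$ properly contains a quasi-dihedral group of order at least $16$, hence is itself non-exceptional, and then both ${\rm inv}(\widehat G)$ and ${\rm inv}(\widehat G^{+})$ are $\equiv 3\pmod 4$, so no contradiction arises. Your closing paragraph offers only a programme for this sub-case, not a proof. You should know, however, that the paper's own proof does not treat this case either: the assertion $[\langle\alpha,G_n\rangle:G_n]=2$ made there is precisely the assumption that $\alpha$ normalizes $G_n$, which does not follow from $\alpha^{2}\in G_n$ alone. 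So, measured against the paper, you have proved everything it actually proves and have correctly isolated a subtlety that the published argument glosses over; but as a self-contained proof of the theorem as literally stated, your proposal is not finished.
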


\begin{proof}
Let $n=2^{r}k$ with ${\rm gcd}(2,k)=1$ and $r\geq 1$.
Suppose, by the contrary, that there exists a pseudo-real Riemann surface $S$ with $G_{n} \leq {\rm Aut}^{+}(S)$  and with some $\alpha \in {\rm Aut}(S) \setminus {\rm Aut}^{+}(S)$ such that $\alpha^{2} \in G_{n}$.
If $G=\langle \alpha, G_{n} \rangle$, then $[G:G_{n}]=2$. As $G_{n}$ is a non-abelian group of order $8n$, then $G$ is a non-abelian group of order $16n=2^{r+4}k$. Moreover, $G$ does not admit an exceptional $2$-Sylow subgroup \cite{ABG, GW}. So, by Lemma \ref{leg1}, we have  ${\rm inv(G)}\equiv\ 3\ {\rm mod} (4)$.
Since $G_{n}$ has $2^{r+1}k+1$ involutions (Lemma \ref{pgg1}) and, as $2^{r+1}k+1\not\equiv 3\ {\rm mod} (4)$, it follows from Lemma \ref{ABQ}  that $G_{n}$ cannot act as group of conformal automorphisms of a pseudo-real Riemann surface. 
\end{proof}

\subsubsection{\bf Case $n \geq 3$ odd}
For each odd integer $n \geq 3$, set
$\widehat{G}_{n}=\langle z, y:\ z^{8n}=y^2=1, yzy=z^{2n-1}\rangle$. Note that it contains as index two subgroup $G_{n}=\langle x=z^2, y\rangle$.

\begin{theo}\label{tps1}
Let $l\geq 2$ be an integer, $n\geq 3$ and $r\geq 1$ odd integers. There are closed pseudo-real Riemann surfaces $S$, of genus $4nl+6nr-8n+1$, such that ${\rm Aut}(S) =\widehat{G}_{n}$ and ${\rm Aut}^{+}(S) =G_{n}$.
\end{theo}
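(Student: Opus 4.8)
The plan is to realize $\widehat{G}_{n}$ as $\Delta/\Gamma$ for a suitable non-orientable NEC group $\Delta$ whose canonical Fuchsian subgroup $\Delta^{+}$ descends to $G_{n}$, mirroring the constructions of Theorems \ref{tps} and \ref{ejemplo}. First I would take the NEC group $\Delta$ with signature $(1;-;[\stackrel{l}{2,\cdots,2},\stackrel{r}{4,\cdots,4}];\{-\})$ and presentation $\Delta=\langle d_{1},\beta_{1},\cdots,\beta_{l},\gamma_{1},\cdots,\gamma_{r}\rangle$ subject to $\beta_{i}^{2}=\gamma_{j}^{4}=1$ and the long relation $\beta_{1}\cdots\beta_{l}\gamma_{1}\cdots\gamma_{r}d_{1}^{2}=1$. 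Since $|\widehat{G}_{n}|=16n$ and the reduced area of this signature is $\tfrac{l}{2}+\tfrac{3r}{4}-1$, the Riemann--Hurwitz formula immediately gives that any torsion-free normal subgroup $\Gamma\triangleleft\Delta$ of index $16n$ produces a closed surface of genus $4nl+6nr-8n+1$, as required.

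The core step is to produce a surjection $\theta:\Delta\to\widehat{G}_{n}$ with torsion-free kernel such that $\theta(d_{1})\in\widehat{G}_{n}\setminus G_{n}$ while $\theta(\beta_{i}),\theta(\gamma_{j})\in G_{n}$. I would send $d_{1}$ to an odd power of $z$ (an anticonformal element), each $\beta_{i}$ to an involution of $G_{n}$ (one of $z^{4n}$ or $yz^{4k}$), and each $\gamma_{j}$ to an order-four element of $G_{n}$ (of the form $yz^{2m}$ with $m$ odd, or $z^{2n}$), choosing the parameters so that the long relation closes up and the images generate $\langle z,y\rangle=\widehat{G}_{n}$. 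Order preservation on $\beta_{i},\gamma_{j}$ guarantees that $\Gamma=\ker\theta$ is torsion-free, and since $d_{1}$ is the only orientation-reversing generator with $\theta(d_{1})\notin G_{n}$, one checks $\theta(\Delta^{+})=G_{n}$ (every generator of $\Delta^{+}$ maps into the normal subgroup $G_{n}$, and $\theta(d_1)\notin G_n$ forces index exactly two); hence $\Gamma\subset\Delta^{+}$ and $S=\h/\Gamma$ is a Riemann surface on which $\widehat{G}_{n}=\Delta/\Gamma$ acts with conformal part $G_{n}$. Here the hypothesis that $r$ be odd is not an accident: reading the long relation in the cyclic quotient that records the exponent of $z$ modulo $4$, every order-four generator contributes $2\pmod 4$ while every involution contributes $0$, so $\theta(\gamma_{1})\cdots\theta(\gamma_{r})$ has $z$-exponent $\equiv 2r$, which must match $\theta(d_{1})^{-2}\equiv 2\pmod 4$; this both forces $r$ odd and tells me exactly how to balance the remaining exponents.

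Next I would verify that $S$ is pseudo-real. The anticonformal automorphisms are precisely the elements of $\widehat{G}_{n}\setminus G_{n}$, namely the odd powers $z^{a}$ and the elements $yz^{a}$ with $a$ odd. Each $z^{a}$ ($a$ odd) has order $8n/\gcd(a,n)\geq 8$, and each $yz^{a}$ ($a$ odd) satisfies $(yz^{a})^{2}=z^{2an}$, an element of order $4$, so that $yz^{a}$ has order $8$. In particular $\widehat{G}_{n}$ contains no anticonformal involution, and $S$ carries anticonformal automorphisms but no reflection, i.e.\ it is pseudo-real.

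Finally, to pin down the full automorphism group I would note that $\Delta^{+}$ uniformizes $S/G_{n}$ with signature $(0;+;[\stackrel{2l}{2,\cdots,2},\stackrel{2r}{4,\cdots,4}];\{-\})$, obtained by halving the genus and doubling the proper periods of the non-orientable signature of $\Delta$. As $l\geq 2$ and $r\geq 1$, this signature has $2l+2r\geq 6$ cone points and is maximal in Singerman's list \cite{S.}, so for a generic choice of complex structure we obtain ${\rm Aut}^{+}(S)=G_{n}$. Since $S$ admits anticonformal automorphisms, ${\rm Aut}^{+}(S)$ has index two in ${\rm Aut}(S)$, whence $|{\rm Aut}(S)|=16n=|\widehat{G}_{n}|$ and therefore ${\rm Aut}(S)=\widehat{G}_{n}$. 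I expect the main obstacle to be the explicit balancing inside $\theta$: arranging simultaneously surjectivity, torsion-freeness, and the long relation (whose mod-$4$ content is exactly the parity constraint on $r$) is the delicate computation, whereas the pseudo-reality check and the Singerman maximality step are comparatively routine.
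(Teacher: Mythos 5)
Your proposal is correct and follows essentially the same route as the paper: the same NEC group with signature $(1;-;[\stackrel{l}{2,\cdots,2},\stackrel{r}{4,\cdots,4}];\{-\})$, an epimorphism onto $\widehat{G}_{n}$ sending $d_{1}$ outside $G_{n}$ and the elliptic generators to order-preserving elements of $G_{n}$, Singerman maximality of the genus-zero signature with $2l+2r\geq 6$ periods to get ${\rm Aut}(S)=\widehat{G}_{n}$, and the fact that every element of $\widehat{G}_{n}\setminus G_{n}$ has order at least $8$, so $S$ is pseudo-real. The only piece left implicit is the explicit epimorphism (the paper exhibits one for each parity of $l$, e.g.\ $\theta(d_1)=z$, $\theta(a_1)=yz^{2n+2}$, $\theta(a_j)=y$, with the $b_i$ sent to $z^{\pm 2n}$ in cancelling pairs); your mod-$4$ exponent homomorphism correctly isolates the constraint forcing $r$ odd, a point the paper does not make explicit.
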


\begin{proof}
Let $n\geq 3$ be an odd integer and let us consider an NEC group with signature $(1;-;[\stackrel{l}{2,\cdots,2},\stackrel{r}{4,\cdots,4}];\{-\})$, whose presentation is 
$\Delta=\langle d_1, a_1, \cdots, a_l, b_1,\cdots,b_r :\ a_1^2=\cdots=a_l^2=b_1^4=\cdots=b_r^4=1,\prod_{j=1}^la_j\prod_{i=1}^rb_id_1^2=1\rangle,$ where $l\geq 2$ be an integer and $r\geq 1$ be an integer odd. 
The quotient Klein surface $\h/\Delta$ is an orbifold whose underlying surface is the real projective plane and its conical points are $l+r$ points. 

Let us consider the epimorphism $\theta:\Delta\to \widehat{G}_{n}$ given by

 \noindent{(if $l$ even)}\begin{small}
 $$\theta(d_1)=z, \, \theta(a_1)=yz^{2n+2}, \theta(a_j)=y\ (2\leq j\leq l), 
 \theta(b_1)=z^{2n},\theta(b_{s})=z^{2n},\theta({b_{s+1}})=z^{-2n}\ (2\leq s\leq r-1);$$ 
 \end{small}
 \noindent{(if $l$ odd)} \begin{small}
 $$\theta(d_1)=yz, \, \theta(a_l)=z^{4n}, \theta(a_j)=y\ (1\leq j\leq l-1), 
 \theta(b_1)=z^{2n},\theta(b_{s})=z^{2n},\theta({b_{s+1}})=z^{-2n}\ (2\leq s\leq r-1).$$
  \end{small}
The kernel $\Gamma$ of $\theta$ is a torsion-free subgroup (contained in the half-orientation part $\Delta^{+}$ of $\Delta$) such that $S=\mathbb{H}/\Gamma$ is a closed Riemann surface with $\widehat{G}_{n} \leq {\rm Aut}(S)$ and $G_{n} \leq {\rm Aut}^{+}(S)$. As the signature of $S/\langle z^2, y\rangle$  is $(0; \stackrel{2l}{2, \cdots, 2}, \stackrel{2r}{4, \cdots,4})$, where the number of cone points is exactly $2l+2r\geq 6$, it follows (from Singerman list of maximal Fuchsian groups \cite{S.}) that we may choose $\Delta$ so that ${\rm Aut}(S)=\widehat{G}_{n}$. In this case, as the only anticonformal  automorphisms of $S$ are the elements of $\widehat{G}_{n} \setminus G_{n}$ (which have order different than two) it follows that $S$ is a pseudo-real Riemann surface. The Riemann-Hurwitz formula, applied to the branched regular cover map $S \rightarrow S/ \langle z^2, y \rangle,$ permits to obtain that $S$ has genus $4nl+6nr-8n+1.$
\end{proof}

\begin{theo}\label{mps+} Let $n \geq 3$ be an odd integer.  The minimal genus of a closed pseudo-real Riemann surface $S$ such that $\widehat{G}_{n}\leq {\rm Aut}(S)$ and $G_{n}=\widehat{G}_{n}\cap {\rm Aut}^{+}(S)$  is $6n+1$.
In that case, $S/G_{n}=\h/\Delta^+$ has signature $(0;+;[2,2,2,2,4,4];\{-\})$ and $S/\widehat{G}_{n}=\h/\Delta$ has signature $(1;-;[2,2,4];\{-\})$. 
\end{theo}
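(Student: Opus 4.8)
The plan is to translate the statement into the language of NEC groups and to minimize the reduced area of the quotient, exactly as in the proofs of Theorem \ref{hyp1} and Theorem \ref{tps1}. A surface $S$ as in the statement corresponds to a proper NEC group $\Delta$ with $S/\widehat{G}_{n}=\h/\Delta$, canonical Fuchsian subgroup $\Delta^{+}$ with $S/G_{n}=\h/\Delta^{+}$, and an epimorphism $\theta\colon\Delta\to\widehat{G}_{n}$ with torsion-free kernel $\Gamma\subseteq\Delta^{+}$ and $\theta^{-1}(G_{n})=\Delta^{+}$. Since every element of $\widehat{G}_{n}\setminus G_{n}$ has order different from two (compare Lemma \ref{pgg1} and the discussion before Theorem \ref{tps1}), $\Delta$ can contain no reflections; hence its signature is of the non-orientable form $(h;-;[m_{1},\dots,m_{r}];\{-\})$ with $h\geq 1$, and $\Delta^{+}$ has signature $(h-1;+;[m_{1},m_{1},\dots,m_{r},m_{r}];\{-\})$. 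By Riemann--Hurwitz, $2g-2=16n\,|\Delta|^{*}$ with $|\Delta|^{*}=h-2+\sum_{i=1}^{r}(1-1/m_{i})$, so minimizing $g$ amounts to minimizing $|\Delta|^{*}$ over the admissible signatures.

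Next I would record the constraints on $\theta$. Each elliptic generator $\beta_{i}$ maps to an element of $G_{n}$ of order exactly $m_{i}$ (torsion-freeness), so each $m_{i}$ is an order occurring in $G_{n}$, while each glide reflection $d_{s}$ maps into $\widehat{G}_{n}\setminus G_{n}$. The decisive tool is the homomorphism $\chi\colon\widehat{G}_{n}\to\widehat{G}_{n}/D_{4n}\cong C_{4}$ given by $\chi(z)=1$, $\chi(y)=0$ (here $D_{4n}\lhd\widehat{G}_{n}$). One checks that $\chi$ vanishes on every involution, equals $2$ on every element of $G_{n}\setminus D_{4n}$ (in particular on every element of order $4$), and is odd on $\widehat{G}_{n}\setminus G_{n}$. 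Applying $\chi$ to the long relation $\prod_{i}\beta_{i}\prod_{s}d_{s}^{2}=1$ gives $2w+2h\equiv 0\pmod 4$, where $w=\#\{i:\theta(\beta_{i})\in G_{n}\setminus D_{4n}\}$; thus $w\equiv h\pmod 2$. For $h=1$ this forces $w$ to be odd, so at least one $\beta_{i}$ must map outside $D_{4n}$, i.e. to an element of order divisible by $4$, whence the corresponding period satisfies $m_{i}\geq 4$. This simultaneously guarantees $\theta(\Delta^{+})\not\subseteq D_{4n}$, the subgroup generated by all involutions of $G_{n}$, which is precisely what is needed for $\theta(\Delta^{+})=G_{n}$.

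The remaining and most delicate ingredient is pseudo-reality, which I would enforce through maximality of the signature of $\Delta^{+}$. If $\Delta$ has $h=1$ and at most two periods, then $S/G_{n}$ is a sphere whose cone points occur in equal pairs $(m_{1},m_{1},m_{2},m_{2})$; such an orbifold always carries the extra conformal involution $u\mapsto c/u$ interchanging the two pairs, so its signature is never maximal. Combined with the anticonformal involution induced by $\widehat{G}_{n}/G_{n}$, which interchanges the same pairs, this extra symmetry forces $S$ to carry a reflection, hence to be real rather than pseudo-real. Therefore, for $h=1$ at least three periods are needed. Using the parity conclusion that $w$ is odd and $\geq 1$ (one period $\geq 4$) together with two further periods $\geq 2$, one obtains $|\Delta|^{*}\geq -1+\tfrac34+\tfrac12+\tfrac12=\tfrac34$, with equality exactly for $(1;-;[2,2,4];\{-\})$. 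For $h\geq 2$ I would argue separately: the only signatures with $|\Delta|^{*}<\tfrac34$ are $(2;-;[2];\{-\})$ and, when $3\mid n$, $(2;-;[3];\{-\})$, and for these the relation $\theta(\beta_{1})=\bigl(\theta(d_{1})^{2}\theta(d_{2})^{2}\bigr)^{-1}$ forces $\theta(\beta_{1})\in\langle z\rangle$, after which a short computation shows $\theta$ cannot be surjective onto $\widehat{G}_{n}$; every other signature already has $|\Delta|^{*}\geq 1$.

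Finally I would exhibit the minimizer. For $(1;-;[2,2,4];\{-\})$ the epimorphism of Theorem \ref{tps1} with $l=2$, $r=1$ provides an admissible $\theta$ with torsion-free kernel and $\theta(\Delta^{+})=G_{n}$; here $\Delta^{+}$ has signature $(0;+;[2,2,2,2,4,4];\{-\})$, whose six cone points admit no forced extra conformal symmetry, so the signature is maximal and, by \cite{S.}, a generic member of the family satisfies $\mathrm{Aut}(S)=\widehat{G}_{n}$ and is pseudo-real. The Riemann--Hurwitz computation then yields $g=6n+1$, with the two quotient signatures as stated. The main obstacle is the interplay described in the previous paragraph: showing that the small-area signatures which survive the parity and generation constraints, most notably $(1;-;[2,4];\{-\})$, are nonetheless excluded because the non-maximality of the paired signature of $\Delta^{+}$ destroys pseudo-reality.
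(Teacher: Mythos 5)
Your overall strategy is the same as the paper's: uniformize by an NEC group $\Delta$ without reflections of signature $(h;-;[m_1,\dots,m_r];\{-\})$, minimize the reduced area subject to the algebraic constraints on $\theta$, and kill the surviving small signatures by showing the action extends to a group containing reflections. Your character $\chi\colon\widehat{G}_n\to\widehat{G}_n/D_{4n}\cong C_4$ is a genuinely nice packaging of the constraints that the paper extracts case by case from the long relation (it even eliminates $(1;-;[4,4];\{-\})$ outright, which the paper only disposes of via the extension argument), and your count $w\equiv h\pmod 2$ is correct for $n$ odd since $D_{4n}$ is then normal in $\widehat{G}_n$ and contains no elements of order $4$.

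There is, however, a concrete step that fails. For $h\geq 2$ you dismiss $(2;-;[3];\{-\})$ (when $3\mid n$) by asserting that $\theta$ cannot be surjective. This is false for $n=3$: in $\widehat{G}_3=\langle z,y: z^{24}=y^2=1,\ yzy=z^5\rangle$, taking $\theta(d_1)=yz$, $\theta(d_2)=z$ and $\theta(\beta_1)=\bigl(\theta(d_1)^2\theta(d_2)^2\bigr)^{-1}=(z^6z^2)^{-1}=z^{16}$ gives a surjective homomorphism (the image contains $z$ and $yz$, hence $y$), with $\theta(\beta_1)$ of order $3$, torsion-free kernel contained in $\Delta^{+}$, and $\theta^{-1}(G_3)=\Delta^{+}$; the resulting surface has genus $1+48\cdot\tfrac{1}{2}\cdot\tfrac{2}{3}=17<19=6n+1$. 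So non-surjectivity cannot close this case; it must be handled like your two-period signatures, via non-maximality (here $(1;+;[3,3];\{-\})\lhd(0;+;[2,2,2,2,3];\{-\})$ in Singerman's list, whose NEC counterpart carries reflections). Two further remarks in the same vein: (i) your statement that every remaining $h\geq 2$ signature has $|\Delta|^{*}\geq 1$ overlooks $(2;-;[d];\{-\})$ with $d\geq 4$, e.g. $(2;-;[4];\{-\})$ of area exactly $\tfrac34$ — harmless for minimality, and in fact excluded by your own parity argument since an order-$4$ image forces $w=1\not\equiv h$, but it should be said if you also want the uniqueness of the signatures in the statement; (ii) the assertion that the extra conformal involution of $(0;m_1,m_1,m_2,m_2)$ together with the imaginary reflection ``forces $S$ to carry a reflection'' is the crux of the whole lower bound and is only asserted — one needs that the extension is realized by a group normalizing the surface group, which is what the paper grounds in Bujalance's normal NEC signatures \cite{Bu0}. (For what it is worth, the paper's own treatment of the $(2;-;[\delta_1,\dots,\delta_l];\{-\})$ case claims ``$y\notin\theta(\Delta)$'' and thus stumbles at exactly the same spot.)
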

\begin{proof} This follows by the Riemann-Hurwitz formula and hyperbolic area comparison. The quotient orbifold $S/G_n=\h/\Delta^+$ admits an imaginary reflection $\rho$, induced by $z$. It follows that its signature is
$(\gamma;+;[\stackrel{2\alpha}{2,\cdots, 2}, \stackrel{2\beta}{4,\cdots, 4},\delta_1,\delta_1,\delta_2,\delta_2,\cdots, \delta_l,\delta_l];\{-\}),$ where each cone point of order 2  is produced by a fixed point of one of the involutions $yz^{4t}$, with $0\leq t\leq 2n-1$; each cone point of order 4 is produced by a fixed point of $yz^{4l+2}$, with $0\leq l\leq 2n-1$; and each point of order $\delta_j$ is produced by a fixed point of some element in $\langle z^2\rangle$ (i.e., $\delta_j\geq 2, \delta_j| 4n$). By the Riemann-Hurwitz formula we have
\begin{equation}\label{rhf}
2(g-1) = 16n(\gamma-1)+8n\alpha+12n\beta+2C,
\end{equation} where $C=\sum_{j=1}^l\frac{8n}{\delta_j}(\delta_j-1)$.
We want to minimize the reduced area $|\Delta^+|_{\gamma}^*$ (the right side of equation (\ref{rhf})) of $\Delta^+$, where
\begin{equation}\label{rhf1}
|\Delta^+|_{\gamma}^*=16n(\gamma-1)+8n\alpha+12n\beta+2C.
\end{equation}

In this case, the quotient orbifold $S/\widehat{G}_{n}=\h/\Delta$ has signature 
$$(\gamma+1;-;[\stackrel{\alpha}{2,\cdots, 2}, \stackrel{\beta}{4,\cdots, 4},\delta_1,\delta_2,\cdots, \delta_l];\{-\}).$$ 
 So, $\Delta$ has a presentation 
$$\Delta=\langle d_1,\cdots, d_{\gamma+1}, a_1,\cdots, a_{\alpha},b_1,\cdots, b_{\beta}, c_1,\cdots, c_{l}\rangle,$$ and its generators satisfy the following relations
$$a_1^2=\cdots =a_{\alpha}^2=b_1^4=\cdots=b_{\beta}^4=c_1^{\delta_1}=\cdots=c_{l}^{\delta_l}=1,\prod_{i=1}^{\alpha}a_i\prod_{j=1}^{\beta}b_j\prod_{k=1}^{l}c_k\prod_{s=1}^{\gamma+1}d_s^2=1,$$ where $S=\h/{\rm ker}(\theta)$  for a surjective homomorphism $\theta:\Delta\to \widehat{G}_n$ such that $\theta(\Delta^+)=G_n$ and
$$\theta(d_j)\in\{y^{t_j}z^{2s_j-1},\ s_j\in\{1,\cdots, 4n\}, t_j\in\{0,1\}\};\ \theta(a_j)\in \{yz^{4r_j}, r_j\in\{0,\cdots, 2n-1\}\};$$ $$\theta(b_j)\in\{yz^{4s_j+2}, s_j\in\{0,\cdots, 2n-1\}\},\ \theta(c_j)\in \langle z^2\rangle.$$

The quotient orbifold $S/G_n=\h/\Delta^+$ admite the signature $(0;+;[2,2,2,2,4,4];\{-\})$ as consequence of the Theorem \ref{tps1}.  In this case, the reduced area of $\Delta^+$ is 
\begin{equation}
|\Delta^+|_0^*=16n(0-1)+16n+12n=12n,
\end{equation}
from which $S$ has genus equal to $6n+1$. Moreover, the signature $(1;-;[2,2,4];\{-\})$ is admisible for the quotient orbifold $S/\widehat{G}_{n}$.
We proceed to check that this signature has the least reduced area.

\noindent{\bf Case $\gamma\geq 2$}. In this case, the reduced area of $\Delta^+$ is $|\Delta^+|_{\gamma\geq 2}^*=16n+8n\alpha+12n\beta+2C\geq 16n$, so $|\Delta^+|_{\gamma\geq 2}^*>|\Delta^+|_0^*$.

\noindent{\bf Case $\gamma=1$}. Suppose that $|\Delta^+|_{1}^*=8n\alpha+12n\beta+2C\leq 12n$. Then we have the cases
$$i)\ \beta=0, \alpha=1, C\leq 2n;\quad ii)\ \beta=1,\alpha=0, C=0; \quad iii)\ \beta=0,\alpha=0, C\leq 6n.$$

In case $i)$, we may observe that $C\leq 2n$ implies that $l=0$. So $\Delta$ has signature $(2;-;[2];\{-\})$ and an epimorphism $\theta:\Delta\to \widehat{G}_n$ is given by $\theta(d_j)=y^{t_j}z^{2s_j-1}$; $\theta(a_1)=yz^{4r}$. Then the relation $\theta(a_1)\theta(d_2)^2\theta(d_1)^2=1$ implies that
$$1=(yz^{4r})(y^{t_1}z^{2s_1-1})^2(y^{t_2}z^{2s_2-1})^2=yz^m\quad \quad (\text{a contradiction}).$$ 
In case $ii)$, $\Delta$ has signature $(2;-;[4];\{-\})$ and an epimorphism $\theta:\Delta\to \widehat{G}_n$ is given by $\theta(d_j)=y^{t_j}z^{2s_j-1}$; $\theta(b_1)=yz^{4r+2}$. Then the relation $\theta(b_1)\theta(d_2)^2\theta(d_1)^2=1$ implies that
$$1=(yz^{4r+2})(y^{t_1}z^{2s_1-1})^2(y^{t_2}z^{2s_2-1})^2=yz^m\quad (\text{a contradiction}).$$ 
 In case $iii)$, $\Delta$ has signature $(2;-;[\delta_1,\cdots, \delta_l];\{-\})$ and an epimorphism $\theta:\Delta\to \widehat{G}_n$ is given by $\theta(d_j)=y^{t_j}z^{2s_j-1}$; $\theta(c_j)\in \langle z^2\rangle$, where we get $y\not\in \theta(\Delta)$ (a contradiction).

\noindent{\bf Case $\gamma=0$}. Suppose that $|\Delta^+|_0^*=-16n+8n\alpha+12n\beta+2C\leq12n$. Then we have 
$$8n\alpha+12n\beta+2C\leq 28n,$$ so $\alpha\leq 3$.
(1) If $\alpha=3$, then $\beta=0$, $C=0$ ($C\leq 2n$ implies that $l=0$), and $\Delta$ has signature $(1;-;[2,2,2];\{-\})$ and an epimorphism  $\theta:\Delta\to \widehat{G}_n$ is given by $\theta(a_j)=yz^{4r_j}$; $\theta(d_1)=y^{t_1}z^{2s_1-1}$. Then the relation $\theta(a_1)\theta(a_2)\theta(a_3)\theta(d_1)^2=1$ implies that
$$1=(yz^{4r_1})(yz^{4r_2})(yz^{4r_3})(y^{t_1}z^{2s_1-1})^2=yz^m\quad (\text{a contradiction}).$$

(2) If $\alpha=2$, then $\beta=1$, $C=0$, and $\Delta$ has signature $(1;-;[2,2,4];\{-\})$ (desired signature). 

(3) If  $\alpha=1$, then $12n\beta+2C\leq 20n$. So, we have the cases $$i)\ \beta=0, C\leq 10n;\quad ii)\ \beta=1, C\leq 4n.$$

In case $i)$,  $\Delta$ has signature $(1;-;[2,\delta_1,\cdots, \delta_l];\{-\})$ and an epimorphism $\theta:\Delta\to \widehat{G}_n$ is given by $\theta(d_1)=y^{t_1}z^{2s_1-1}$; $\theta(a_1)=yz^{4r_1}$; $\theta(c_j)\in \langle z^2\rangle$. 
The relation $\theta(a_1)\prod_{i=1}^l\theta(c_i)\theta(d_1)^2=1$ implies that
$1=yz^{4r_1}z^{2A}(y^{t_1}z^{2s_1-1})^2=yz^m\quad (\text{a contradiction}).$
In case $ii)$,  $\Delta$ has signature $(1;-;[2,4,\delta_1,\cdots, \delta_l];\{-\})$ and an epimorphism $\theta:\Delta\to \widehat{G}_n$ is given by $\theta(d_1)=y^{t_1}z^{2s_1-1}$; $\theta(a_1)=yz^{4r_1}$; $\theta(b_1)=yz^{4s_2+2}$ and $\theta(c_j)\in \langle z^2\rangle$. As $C\leq 4n$, this implies that $l=0$ or $1$. 
If $l=0$, then (by results in \cite{Bu0}), the action of the group $\widehat{G}_n$ with signature $(1;-;[2,4];\{-\})$ on $S$ can be extends to the action of a larger group on $S$ with signature $(0;+;[2];\{(2,4)\})$, and again $S$ admits reflections, and so cannot be pseudo-real. For $l=1$, the signature $(1;-;[2,2,4];\{-\})$ is the desired signature. 

(4) If $\alpha=0$, then $12n\beta+2C\leq 28n$, so
$$i)\ \beta=2,\ C\leq 2n;\quad ii)\ \beta=1, C\leq 8n;\quad iii)\ \beta=0, C\leq 14n.$$
In this case, $\Delta$ has signature $(1;-;[\stackrel{\beta}{4,\cdots, 4},\delta_1,\cdots, \delta_l];\{-\})$ and an epimorphism $\theta:\Delta\to \widehat{G}_n$ is given by $\theta(d_1)=y^{t_1}z^{2s_1-1}$; $\theta(b_j)=yz^{4t+2}$ and $\theta(c_j)\in \langle z^2\rangle$.
In case $i)$, $\Delta$ has signature $(1;-;[4,4];\{-\})$ ($C\leq 2n$ implies that $l=0$). By results in \cite{Bu0}, the action of the group $\widehat{G}_n$ with signature $(1;-;[4,4];\{-\})$ on $S$ can be extends to the action of a larger group on $S$ with signature $(0;+;[2,4];\{(-)\})$, and again $S$ admits reflections, and so cannot be pseudo-real.
In case $ii)$, the relation  $\theta(b_1)\prod_{j=1}^l\theta(c_j)\theta(d_1)^2=1$ implies that $1=yz^m$\  (contradiction).
In case $iii)$, the relation $\prod_{j=1}^l\theta(c_j)\theta(d_1)^2=1$ implies that $y\not\in \theta(\Delta)$\  (contradiction).
\end{proof}

\begin{remark}[Non-uniqueness of the action]\label{Ups+}
There are different topological actions of $\widehat{G}_n$ on pseudo-real Riemann surfaces of genus $6n+1$ with $G_{n}$ as its conformal part.
In fact, by Theorem \ref{mps+}, we can consider an NEC group $\Delta$ with signature $(1;-;[2, 2, 4];\{-\})$ and presentation $\Delta=\langle d_1,a_1,a_2,b_1:\ a_1^2=a_2^2=b_1^4=1,a_1a_2b_1d_1^2=1 \rangle.$
We want to describe the epimorphisms $\theta:\Delta\to \widehat{G}_n$ such that $\theta(\Delta^{+})=G_{n}$ and torsion-free kernel. It must satisfy that 
$$\theta(a_i)\in \{x^{4n},yx^{4r_i},\ r_i\in\{0,\cdots, 2n-1\}\};\ \theta(b_1)\in \{x^{2n}, x^{-2n},yx^{4s_1+2},\ s_1\in\{0,\cdots, 2n-1\}\};$$ 
$$\theta(d_1)\in\{y^{t_1}z^{2s_1-1},\ s_1\in\{1,\cdots, 4n\}, t_1\in\{0,1\}\};\ \theta(a_1)\theta(a_2)\theta(b_1)\theta(d_1)^2=1.$$

By post-composing $\theta$ by automorphisms of $G_{n}$, we obtain that all of these possibilities for $\theta$ are ${\rm Aut}(G_{n})/{\rm Aut}(\Delta)$-equivalent to:
$$\theta_1:\Delta\to G_{n}:\ \theta_1(d)=x;\ \theta_1(b_1)=x^{2n};\ \theta_1(a_1)=yx^{2n+2};\ \theta(a_2)=y,$$ or
$$\theta_2:\Delta\to G_{n}:\ \theta_2(d_1)=yx;\ \theta_2(b_1)=yx^{2n};\, \theta_{2}(a_1)=y;\ \theta_2(a_2)=x^{4n}.$$
\end{remark}

\section{Minimal genus actions of $G_{n}$ on compact Klein surfaces} 
Let $S$ be a closed Riemann surface of genus $g\geq 2$ and $\tau$ be an anticonformal  involution of $S$. The quotient orbifold $X=S/\langle \tau \rangle$ is a compact hyperbolic Klein surface whose underlying topological surface is either (i) non-orientable with empty boundary ({\em closed Klein surface}) or (ii) it has non-empty boundary (it might be orientable or not); the fixed points of $\tau$ correspond to the boundary points of $X$. The surface $S$ is called an ``analytic double$"$ of $X$ and $g$ its  {\em algebraic genus}. The {\em topological genus} of  $X$ is  $\gamma=(g-k+1)/\eta$, where $\eta= 2$ if $S/\langle \tau \rangle$ is orientable and $\eta=1$ otherwise, and 
 $k$ is the number of connected components of the fixed points of $\tau$. Moreover, it is known that  ${\rm Aut}(X) \cong \{f\in {\rm Aut}^{+}(S) :\ f\tau=\tau f\}.$ 
 
 By the uniformization theorem, there exists an NEC group $\Gamma$ such that $X=\h/\Gamma$. 
 Let $G$ be a given finite group. It is well known that $G$
can be realized as a subgroup of ${\rm Aut}(X)$ if and only if there exists an NEC group $\Delta$, containing $\Gamma$ as a normal subgroup, and there exists a surjective homomorphism $\theta:\Delta \to G$ with $\Gamma$ as its kernel (see, for instance, \cite{BuEtaGamGro,Pr}). 
From the relation between hyperbolic areas we have $g-1=|G||\Delta|^*,$ where $|G|$ is the order of $G$ and $|\Delta|^{*}$ is the reduced area of $\Delta$ (see, Section \ref{S2}). To minimize the value of $g$, keeping fixed $G$, we need to minimize the value of $|\Delta|^{*}$.

 In \cite{May77}, it was observed that $G$ can be realized as a group of automorphisms of a hyperbolic bordered Klein surface. The minimum algebraic genus $\rho(G)$ of these surfaces is called the {\em real genus} of $G$. Similarly, in \cite[Theorem 2.5]{Bu}, it was observed that 
$G$ can also be realizable as a group of automorphisms of some closed Klein surface. The minimal topological genus $\tilde{\sigma}(G)$ of these surfaces is called the {\em symmetric crosscap number} of $G$. It is know that $3$ cannot be the symmetric crosscap number of any group \cite{May0}. In the following, we proceed to compute these values for $G=G_{n}$, with $n \geq 2$.

\begin{theo}\label{real}
If $n \geq 2$, then 
\begin{enumerate}
\item[(a)] $\rho(G_{n})=2n+1$ and the action of $G_{n}$ is unique.
\item[(b)] $\tilde{\sigma}(G_{n})=2n+2$ and the action of $G_{n}$ is not unique (for $n \neq 3$).
\end{enumerate}
\end{theo}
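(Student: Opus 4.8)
The plan is to treat both parts as area-minimization problems for NEC groups, using the relation $g-1=8n\,|\Delta|^*$ recalled above: an action of $G_n$ on a compact Klein surface $X$ corresponds to an NEC group $\Delta$ and a surjection $\theta\colon\Delta\to G_n$ whose kernel $\Gamma$ uniformizes $X$, so minimizing the algebraic genus $g$ amounts to minimizing $|\Delta|^*$ subject to $\Gamma$ being of the correct topological type. Before touching signatures I would record two facts about $G_n$ that drive everything. First, by Lemma \ref{pgg1} the involutions of $G_n$ are $x^{2n}$ and the elements $yx^{i}$ with $i$ even, and since $y$ and $yx^{2}$ already produce $x^{2}$, these involutions generate exactly $D_{4n}=\langle x^{2},y\rangle$; consequently every surjection onto $G_n$ must send some non-reflection generator to an element outside $D_{4n}$, i.e. to some $x^{\mathrm{odd}}$ or to some $yx^{\mathrm{odd}}$, and the smallest possible order of such an element is $4$. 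Second, the squares of $G_n$ are precisely the even powers of $x$: $(x^{i})^{2}=x^{2i}$ and $(yx^{i})^{2}\in\{1,x^{2n}\}$, so the set of squares equals $\langle x^{2}\rangle$. The first fact forces a proper period or a link generator of order at least $4$ and thereby bounds $|\Delta|^*$ from below; the second obstructs the cheapest non-orientable signatures.

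For part (a), $X$ is bordered, so $\Delta$ must carry a period cycle and $\Gamma$ must contain a reflection, which happens exactly when $\theta(c_{ij})=1$ for some canonical reflection. I would first run a case analysis over admissible bordered signatures and show $|\Delta|^*\ge 1/4$: generating $G_n$ requires, by the first structural fact, a non-reflection generator of order at least $4$ together with a further involution not expressible from it, while a bordered kernel requires a period cycle to supply the boundary, and balancing these contributions against the base term $-1$ forces the reduced area to be at least $1/4$; signatures undercutting this bound fail either surjectivity or the order-preservation needed for a surface-group kernel. For the matching upper bound I would exhibit an NEC group of reduced area $1/4$ together with an explicit epimorphism; a natural candidate is $\Delta$ with signature $(0;+;[2,4];\{(-)\})$ and $\theta(\beta_{1})=y$, $\theta(\beta_{2})=yx$, $\theta(e_{1})=\theta(\beta_{1}\beta_{2})^{-1}=x^{-1}$, $\theta(c_{10})=1$, whose kernel is a bordered surface group (the empty period cycle supplies the boundary) and which is onto because $\langle y,yx\rangle=G_n$. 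The Riemann--Hurwitz formula then gives $g=2n+1$, so $\rho(G_n)=2n+1$. For uniqueness I would classify, up to $\mathrm{Aut}(\Delta)\times\mathrm{Aut}(G_n)$, all bordered-surface-kernel epimorphisms from the minimal signature and show there is a single class, exactly as was done for the index-two subgroup $D_{4n}$ in Theorem \ref{tumshg} and Corollary \ref{Ups}.

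For part (b) the target is a closed non-orientable surface: now $\Gamma$ must have no reflections (so every $\theta(c_{ij})$ is a genuine involution) while being non-orientable (equivalently $\theta(\Delta^{+})=G_n$), and the crosscap number equals $g+1=8n\,|\Delta|^*+2$, so again I aim for $|\Delta|^*=1/4$. The second structural fact is decisive here: the only purely non-orientable signature of reduced area $1/4$ is $(1;-;[2,4];\{-\})$, but its long relation forces $\theta(d_{1})^{2}=\theta(\beta_{1}\beta_{2})^{-1}$ to be a square, hence an even power of $x$, and one checks that no choice of an involution $\theta(\beta_{1})$ and an order-$4$ element $\theta(\beta_{2})$ makes this product lie in $\langle x^{2}\rangle$ while keeping $\theta$ surjective; thus this signature is inadmissible. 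One is therefore pushed to signatures with a period cycle whose reflections map to non-trivial involutions (keeping $X$ closed) together with $\theta(\Delta^{+})=G_n$; I would present such a $\Delta$ of reduced area $1/4$, for instance with signature $(0;+;[4];\{(2,2)\})$ and $\theta(\beta_{1})=yx$, $\theta(c_{10})=y$, $\theta(c_{11})=x^{2n}$, $\theta(c_{12})=x^{2n-2}y$, $\theta(e_{1})=(yx)^{-1}$, verify $\theta(\Delta^{+})=G_n$, and conclude $\tilde{\sigma}(G_n)=2n+2$. Non-uniqueness I would establish by exhibiting a second, inequivalent epimorphism of reduced area $1/4$ realizing the same crosscap number, and then checking that for $n=3$ the classification collapses to a single $\mathrm{Aut}(\Delta)\times\mathrm{Aut}(G_n)$-class, which accounts for the exceptional uniqueness there.

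The main obstacle in both parts is the lower-bound enumeration: one must list all NEC signatures of reduced area at most $1/4$ and show, signature by signature, that the required surface-kernel epimorphism onto $G_n$ either does not exist or yields a surface of the wrong topological type (orientable, closed instead of bordered, or carrying torsion). The two structural facts about involutions and squares are precisely the levers that make this finite enumeration terminate; the bookkeeping that distinguishes bordered from closed kernels, and the delicate comparison at $n=3$ that singles out the exceptional uniqueness in part (b), are where I expect the genuine difficulty to concentrate.
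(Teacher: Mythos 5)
Your upper-bound constructions are essentially the paper's: the same signature $(0;+;[2,4];\{(-)\})$ with trivial reflection images for the real genus, and the same two signatures $(0;+;[2,4];\{(-)\})$ and $(0;+;[4];\{(2,2)\})$ with non-trivial reflection images for the symmetric crosscap number, and your explicit epimorphisms check out. The genuine gap is in the lower bound, which is the substantive half of both equalities. You propose to "list all NEC signatures of reduced area at most $1/4$ and show, signature by signature," that no admissible surface-kernel epimorphism exists, but you never carry out this enumeration, and your claim that the two structural facts (involutions generate $D_{4n}$; squares lie in $\langle x^{2}\rangle$) are "precisely the levers that make this finite enumeration terminate" is not justified. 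For instance, in part (b) the signature $(0;+;[4n];\{(2)\})$ has reduced area $1/4-1/(4n)<1/4$, its generators $\beta_1\mapsto x^{i}$ (with $\gcd(i,4n)=1$) and $c_{10}\mapsto yx^{2s}$ do surject onto $G_{n}$, and neither of your two facts rules it out: the obstruction is that $\theta(c_{10}c_{11})=x^{2i(1-n)}$ has order $n$ or $2n$, never $2$, so the kernel has torsion. Your enumeration would have to detect failures of this third kind as well, and until it is actually performed the equalities $\rho(G_{n})=2n+1$ and $\tilde{\sigma}(G_{n})=2n+2$ are not proved.

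The paper avoids this enumeration entirely by a monotonicity argument you do not use: $\rho(H)\leq\rho(G_{n})$ and $\tilde{\sigma}(H)\leq\tilde{\sigma}(G_{n})$ for every subgroup $H$, applied to the index-two subgroup $DC_{4n}$, whose real genus $2n+1$ and symmetric crosscap number $2n+2$ (for $n\neq 3$) are already in the literature (\cite{May93}, \cite{Bu}, \cite{ECM}). This gives the lower bounds in one line and isolates $n=3$ as the only case needing extra work, namely excluding the value $7$ by analysing the possible index-two extensions of the $DC_{12}$-action and invoking the classification in \cite{Ba}. Your treatment of $n=3$ is also off target: you plan to show the action is \emph{unique} there, but the reason $n=3$ is exceptional in the paper is the weaker lower bound $\tilde{\sigma}(DC_{12})=7$, not a collapse of equivalence classes; the paper's two inequivalent epimorphisms (distinguished simply by having different signatures) remain valid for $n=3$. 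Finally, for the uniqueness in part (a) you defer to the method of Theorem \ref{tumshg}, which is the right spirit, but the actual classification must be done for the bordered signature $(0;+;[2,4];\{(-)\})$, where the paper finds two candidate classes and exhibits an explicit automorphism $L$ of $\Delta$ identifying them; this step is also absent from your proposal.
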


\begin{remark}
Part (a), when $n$ is a power of two (respectively, for $n \geq 3$ odd), 
was previously obtained in \cite{May94} (respectively, \cite{EM}). 
\end{remark}

\subsection{Proof of part (a) of Theorem \ref{real}}
In order to compute 
$\rho(G_n)$, we must find the minimal possible value of $|\Delta|^*$ among those NEC groups  $\Delta$ admitting an epimorphism to $G_n$ and whose kernel has 
signature of the form $(\gamma;\pm; [-],\{\stackrel{k}{(-)\cdots (-)}\}).$
We recall that the signature of $\Delta$ must contain an empty period-cycle or a period cycle with two consecutive link-periods equal to 2 (see, \cite{BM89}). 
In \cite{May93}, it was noted that 
\begin{center}$\rho(C_{4n})=\rho(D_{2n})=0,\ \rho(DC_{4n})=\begin{cases}
2n+1,& \text{if $n\not=3$}\\
 6,& \text{if $n=3$}
 \end{cases},$\end{center}      
 and $\rho(H)\leq \rho(G_{n})$ for every subgroup $H$ of $G_{n}$. 
It follows, from the above, that 
 $6\leq \rho(G_3)$ and $2n+1\leq \rho(G_{n})$, for $n\not=3$. As in the group $G_{n}$, the element $xy$ has order $4$ (Lemma \ref{pgg1}) and this group can be generated by $y$ and $xy$, it is sufficient to consider an NEC group $\Delta$ with signature $(0;+;[2,4],\{(-)\})$ and presentation
\begin{equation}\label{rmg}\Delta=\langle \beta_1, \beta_2, e_1, c_{10}, c_{11}:\ \beta_1^2=\beta_2^4=c_{1j}^2=c_{10}c_{11}=1, e_1c_{10}e_1^{-1}c_{11}=\beta_1\beta_2e_1=1\rangle.\end{equation}

Let us consider the surjective homomorphism $\theta:\Delta\to G_{n}$, given by
$$\theta(\beta_1)=y;\ \theta(\beta_2)=xy;\ \theta(e_1)=x^{2n+1};\ \theta(c_{10})=1;\ \theta(c_{11})=1.$$ The group $\Delta$ has reduced area $1/4$ (see, \cite{ECM}), and this is the lowest possible. Thereby, $\rho(G_{n})\leq 2n+1$ for all $n\geq 2$. So $\rho(G_{n})= 2n+1$.
Also, note that the epimorphism $\theta$ is valid for $n=3$, so $\rho(G_{3})=7$.

In order to prove the uniqueness of the $G_{n}$-action we proceed as follows.
As noted above, the minimal action is provided by an NEC group $\Delta$ with signature $(0;+;[2,4];\{(-)\})$ and presentation as in (\ref{rmg}). Let us consider a surjective homomorphism $\theta:\Delta\to G_{n}$.  It must satisfy that $\theta(\beta_{1})\! \in\! \lbrace x^{2n}, yx^{2s}, s\!\in\! \lbrace0, 1, \tiny{\ldots} , 2n-1 \rbrace \rbrace;$ $\theta(\beta_{2})\! \in\! \lbrace x^{n}, x^{-n}, yx^{2t+1},\ t\in \lbrace0, 1, \tiny{\ldots} , 2n-1 \rbrace \rbrace;$ $\theta(e_1)\!\in\! \Delta^{+}$; $\theta(c_{1j})\!\in\! \Delta\setminus \Delta^{+};$ $\theta(\beta_{1})\theta(\beta_{2}) \theta(e_{1})=\theta(e_{1})\theta(c_{10}) \theta(e_{1})^{-1}\theta(c_{11})=1.$

By post-composing $\theta$ by automorphisms of $G_{n}$, we may obtain either 
\begin{center}$\theta_1:\Delta\to G_{n}:\ \theta_1(\beta_1)=y;\ \theta_1(\beta_2)=xy;\ \theta_1(e_1)=x^{2n+1};\ \theta_1(c_{10})=\theta_1(c_{11})=1,\ \text{or}$\end{center}
\begin{center}$\theta_2:\Delta\to G_{n}:\ \theta_2(\beta_1)=x^{2n};\ \theta_2(\beta_2)=yx;\ \theta_2(e_1)=yx;\ \theta_2(c_{10})=y,\ \theta_2(c_{11})=yx^{2n+2}.$\end{center}

Moreover, if $L$ is the automorphism of $\Delta$ given by $$L(\beta_{1})=\beta_2^2; \, L(\beta_2)=\beta_2^{-1}; \, L(e_1)=\beta_2^{-1}; \, L(c_{10})=\beta_1c_{10};\ L(c_{11})=\beta_2^{-1}c_{10}\beta_1\beta_2,$$ 
then $\theta_1\circ L:\Delta\to G_{n}$ is given by 
\begin{small}
$$ (\theta_1\circ L)(\beta_1)=x^{2n};\ (\theta_1\circ L)(\beta_2)=x^{2n+1}y;\, (\theta_1\circ L)(e_1)=yx^{-1};\ (\theta_1\circ L)(c_{10})=y;\ (\theta_1\circ L)(c_{11})=yx^{2n-2}.$$
\end{small}
We note that, after post-composition of $\theta_1\circ L$ by the automorphism $\psi_{4n-1,0}$ of $G_{n}$ we may obtain $\theta_2$. 

\subsection{Proof of part (b) of Theorem \ref{real}}
If  $X = \h/\Gamma$ is a closed Klein surface, of topological genus $\gamma \geq 3$, such that $G \leq {\rm Aut}(X)$, then there exists an NEC group $\Delta$ such that $\Gamma$ is a normal subgroup of $\Delta$ with $G = \Delta / \Gamma$. Moreover, from the Riemann-Hurwitz formula we have $\gamma-2 = |G| |\Delta|^*$. In this way, to compute $\tilde{\sigma}(G)$, we need to minimize the value of $|\Delta|^*$ among all those possible NEC groups $\Delta$. 

It is well known that if $H$ is a subgroup of $G$, then 
\begin{equation}\label{mcg}
\tilde{\sigma}(H)\leq \tilde{\sigma}(G).
\end{equation}
     
If $\tilde{\sigma}(H)>1$, for $H \in\lbrace C_{4n}, D_{2n}, DC_{4n} \rbrace$, then (see, \cite{Bu, ECM}) we have that $\tilde{\sigma}(C_{4n})=2n+1$,
 $$
 \tilde{\sigma}(D_{2n})=\left\{\begin{array}{ll}
 n+1,& \mbox{if $4$ divides $2n$}\\
  n,& \mbox{otherwise}
 \end{array}\right\}, \;
 \tilde{\sigma}(DC_{4n})=\left\{\begin{array}{ll}
 2n+2,& \mbox{if $n\not=3$}\\
7,& \mbox{if $n=3$}
\end{array}
\right\}.$$

The above, together with the inequality (\ref{mcg}), asserts that $2n+2 \leq \tilde{\sigma}(G_{n})$ for $n\not=3$, and  $7 \leq \tilde{\sigma}(G_3).$ In this way, we only need to centre our attention to the actions of the subgroup $DC_{4n}$ on closed Klein surfaces. Below, we observe that $\tilde{\sigma}(G_{n})\leq 2n+2$, from which the desired equality follows.

\subsection*{Case $n \neq 3$}
An action of $DC_{4n}$ can be carried out by an NEC group with signature $(0;+;[4,4]; \lbrace (-)\rbrace)$. By results in \cite{BCC}, this signature can be extended with index two to the signatures:
 $$i)\ (0;+; [2,4];\lbrace (-)\rbrace),\quad ii)\ (0;+; [4];\lbrace (2,2)\rbrace).$$ 

\medskip
\noindent Case $i)$.  We have an NEC group $\Delta$ with presentation
$$\Delta=\langle \beta_1, \beta_2, c_{10}, c_{11}, e_1:\ \beta_1^2=\beta_2^4=c_{1j}^2=c_{10}c_{11}=1, \, e_1c_{10}e_1^{-1}c_{11}=\beta_1\beta_2e_1=1\rangle,$$ 
and an epimorphism $\theta:\Delta \to G_{n}$ given by $$\theta(\beta_1)=y;\ \theta(\beta_2)=yx;\ \theta(e_1)=x^{-1};\ \theta (c_{10})=\theta (c_{11})=x^{2n}.$$ 
It is clear that the image of the elements $\beta_{1}$ and $\beta_1\beta_2$ generate the group $G_{n}$ and both preserve the orientation, so the group $G_{n}$ acts on non-orientable surfaces. By the minimality of $\tilde{\sigma}(G_{n})$, and results on reduced areas in \cite{ECM}, we have $\tilde{\sigma}(G_{n})\leq 2+ \mid G_{n} \mid 1/4 =2+2n.$ 

\medskip
 \noindent Case $ii)$.  We have an NEC group $\Delta$ with presentation
 $$\Delta=\langle \beta_1, c_{10}, c_{11},c_{12}, e_1:\ \beta_1^4=c_{1j}^2=(c_{10}c_{11})^2=(c_{11}c_{12})^2=1, e_1c_{10}e_1^{-1}c_{12}=\beta_1e_1=1\rangle,$$ 
 and an epimorphism $\theta:\Delta \to G_{n}$ given by $$\theta(\beta_1)=xy; \,\theta(e_1)=yx^{4n-1}; \, \theta(c_{10})=y; \, \theta(c_{11})=x^{2n};\theta(c_{12})=yx^{2n-2}.$$ 
 
The image of the elements $c_{10}$ and $x_1c_{10}$ generate the group $G_{n}$.  Now we have that the element $(x_1c_{10})^{2n}c_{11}$ has as image the identity, and is an element that reverse orientation, so the group $G_{n}$ acts on non-orientable surfaces. Again, by the minimality of $\tilde{\sigma}(G_{n})$, and results on reduced areas in \cite{ECM},  we have $\tilde{\sigma}(G_{n})\leq 2+ \mid G_{n} \mid 1/4 =2+2n.$

\begin{remark} 
As a consequence of the above, we obtain that on the symmetric crosscap number of $G_{n}$ (for $n \neq 3$) there are two different signatures.
\end{remark}

\subsection*{Case $n=3$}
In this case, the action of the group $DC_{12}$ on closed Klein surfaces, can be realized with an NEC group with signature $(0;+;[3,4]; \lbrace (-)\rbrace)$. By results in \cite{BCC}, this can be extended with index two to an NEC group with signature $(0;+; [-];\lbrace (2,2,3,4)\rbrace)$. Since $G_3$ cannot be generated by involutions, we cannot build an epimorphism $\theta:\Delta \to G_3$. In \cite{Ba}, there is a classification of the groups with symmetric crosscap number equal $7$.  The group $G_3$ does not belong to this classification.
Note that the epimorphisms, previously constructed for the case $n \neq 3$, is still valid for $n=3$. So, $\tilde{\sigma}(G_3)=8$.
    
\medskip
\noindent{\bf Acknowledgements}\\
The results of this article are mostly based on the second author's Ph.D. thesis. 


\begin{thebibliography}{99}
\bibitem{ABCNPW}
M. Arbo, K. Benkowski, B. Coate, H. Nordstrom, C. Peterson and A. Wootton.
The genus level of a group. 
{\it Involve a journal of mathematics} {\bf 2} (2009), 323--340.

\bibitem{AQR}
M. Artebani, S. Quispe and C. Reyes.
Automorphism groups of pseudoreal Riemann surfaces.
{\it Journal of Pure and Applied Algebra} {\bf 221} (2017), 2383--2407.

\bibitem{ABG}
J. L. Alperin, R. Brauer and D. Gorenstein.
{\it Finite groups with quasi-dihedral and wreathed Sylow 2-subgroups}.
Transactions of the American Mathematical Society, {\bf 151} (1970).


\bibitem{Ba}
A. Bacelo.
The full group of automorphisms of non-orientable unbordered Klein surfaces of topological genus 7.
{\it  Rev Mat Complut.} {\bf 31} (2018), 247--261.

\bibitem{BGH}
C. Bagi\'nski, G. Gromadzki and R. A. Hidalgo.
On purely non-free finite actions of abelian groups on compact surfaces.
{\it Arch. Math.} {\bf 109} (2017), 311--321.


\bibitem{BRT}
A. Behn, A. M. Rojas and M. Tello-Carrera.
A SAGE Package for $n$-Gonal Eqquisymmetric Stratification of $\mathcal{M}_g$.
{\it  Experimental Mathematics} (2020), 1--16. 

\bibitem{Bu03}
E. Bujalance, F. J. Cirre, J. M. Gamboa and G. Gromadzki.
On compact Riemann surfaces with dihedral groups of automorphisms.
{\it Math. Proc. Cambridge. Philos. Soc.} {\bf 134} (2003), 465--447. 

\bibitem{Bu10}
E. Bujalance, M. D. E. Conder and A. F. Costa. {Pseudo-real Riemann surfaces and chiral regular maps}, {\it Trans. Am. Math. Soc.}
{\bf 362} (7) (2010), 3365--3376.

\bibitem{BM89}
E. Bujalance and E. Mart\'inez.
A remark on NEC groups representing surfaces with boundary.
{\it Bull. Lond. Math. Soc.} {\bf  21} (1989), 263--266.

\bibitem{Bu0}
E. Bujalance.
Normal N.E.C. signatures.
{\it  Illinois journal of mathematics} {\bf 26}  (1982), 519--530.

\bibitem{Bu}
E. Bujalance.
Cyclic groups of automorphisms of compact nonorientable Klein surface without boundary.
{\it Pacific J. Math.} {\bf 109} (1983),  279--289.
 
 \bibitem{BCC}
 E. Bujalance, F. J. Cirre and M. D. E. Conder.
 Extensions of finite cyclic group action on non-orientable surfaces.
 {\it Pacific J. Amer. Math. Soc.} {\bf  365} (2013), 4209--4227.
 
 \bibitem{BuEtaGamGro}
 E. Bujalance, J. J. Etayo, J. M. Gamboa and G. Gromadzki.
 {\it Automorphism groups of compact bordered Klein surfaces,  A combinatorial approach.}
 Lecture Notes in Mathematics {\bf 1439}, Springer-Verlag, Berlin, 1990.
 
 \bibitem{Burn}
W. Burnside. 
{\it Theory of Groups of Finite Order.}
 Cambridge University Press, 1911.

 
\bibitem{CL}
 M. D. E. Conder and S. Lo.
 The pseudo-real genus of a group. 
 {\it Journal of Algebra} {\bf 561} (2020), 149--162.
 
 
 \bibitem{D}
 R. Diestel.
 {\it Graph Theory}, 3rd ed., Springer, 2005.
 
 \bibitem{EM}
 J.J. Etayo and E. Mart\'inez.
 The real genus of cyclic by dihedral and dihedral by dihedral groups.
 {\it  Journal of Algebra} {\bf 296} (2006), 145--156.
 
 \bibitem{ECM}
 J. J. Etayo, F. J. Cirre and E. Mart\'inez.
 The Symmetric Crosscap Number Of The Groups Of Small-Order.
 {\it Journal of Algebra and Its Applications} {\bf 12} (2013), 125--164 

 
 \bibitem{GiGo}
E. Girondo and G. Gonz\'alez-Diez.
{\it Introduction to compact Riemann surfaces and dessins d'enfants.} 
London Mathematical Society Student Texts  {\bf 79}. Cambridge University Press, Cambridge, (2012).


\bibitem{GW}
D. Gorenstein and J. H. Walter.
The characterization of finite groups with dihedral Sylow 2-subgroups.
{\it  Journal of algebra} {\bf 2} (1965), 85--151.

\bibitem{Gre}
L. Greenberg.
Maximal Fuchsian groups.
{\it Bull. Amer. Math. Soc.} {\bf 69} (1963), 569--573.


\bibitem{Greenberg}
L. Greenberg.
Conformal Transformations of Riemann Surfaces.
{\it Amer. J. of Math.} {\bf 82} (2), 749--760 (1960)


\bibitem{GroT}
J. L. Gross and T. W. Tucker. 
{\it Topological graph theory} (John Wiley and Sons, 1987).

\bibitem{Gro}
A. Grothendieck.
Esquisse d'un Programme (1984). In {\it Geometric Galois Actions}. L. Schneps and P. Lochak eds.  
London Math. Soc. Lect. Notes Ser. {\bf 242}. Cambridge University Press, Cambridge, (1997), 5--47.

\bibitem{Ha}
 W. J. Harvey.
 Cyclic groups of automorphisms of a compact Riemann surface. 
 {\it Quart. J. Math. Oxford Ser.}  {\bf 17} (1966), 86--97.
 
 \bibitem{Hurwitz}
A. Hurwitz. 
\"Uber algebraische gebilde mit eindeutigen transformationen in siche.
{\it Math. Ann.} {\bf 41} (1893), 403--442.


 \bibitem{H.S}
 R. A. Hidalgo and  S. Quispe.
 Regular dessins d'enfants with dicyclic group of automorphisms.  
 {\it Journal of Pure and Applied Algebra} {\bf 224} (2020), 106242.
 
 \bibitem{Hu1}
 B. Huggins.
 Fields of moduli of hyperelliptic curves.
 {\it Math. Res. Lett.} {\bf 14} (2007), 249--262.

 \bibitem{JW}
G. A. Jones and J. Wolfart.
{\it Dessins d'Enfants on Riemann Surfaces.}
Springer Monographs in Mathematics. (2016).

\bibitem{KR}
E. Kani and M. Rosen.
Idempotent relations and factors of Jacobians. 
{\it Math. Ann.} {\bf 284} (1989), 307 -- 327.

\bibitem{LR}
H. Lange and S. Recillas.
Abelian varieties with group actions.
{\it J. Reine Angew. Mathematik} {\bf  575} (2004), 135--155.

  
\bibitem{Mac}
 A. M. Macbeath.
 The classification of non-euclidean plane crystallographic groups.
 {\it Canad. J. Math.} {\bf 19} (1967), 1192--1205. 
   

\bibitem{May77}
C. L. May.
Large automorphism groups of compact Klein surfaces with boundary.
{\it Glasgow Math. J.} {\bf 18} (1977), 1--10.

\bibitem{May93}
C. L. May.
Finite groups acting on bordered surfaces and the real genus of a group.
{\it Rocky Mountain J. Math.} {\bf 23} (1993), 707--724.

\bibitem{MZ7}
C. L. May and J. Zimmerman.
Groups of small strong symmetric genus.
{\it J. Group Theory} {\bf 3} (2000), 233--245.

\bibitem{May94}
C. L. May.
Groups of small real genus.
{\it Houston Journal of Mathematics} no. 3 (1994), 393--408.

\bibitem{May0}
C. L. May.
The symmetric crosscap number of a group.
{\it Glasgow Math. J.} {\bf  43} (2001), 399--410.
   
\bibitem{May3}
C. L. May and J. Zimmerman.
There is a group of every strong symmetric genus.
{\it Bull. London Math. Soc.} {\bf 35} (2003), 433--439.

\bibitem{May10}
C. L. May and J. Zimmerman.
The $2$-groups of odd strong symmetric genus.
{\it Journal of Algebra and Its Applications} {\bf 9} (2010), 465--481.
   

\bibitem{Mu}
M. Murai.
On the number of $p$-subgroups of a finite group.
{\it J. Math. Kyoto Univ.} {\bf 42} (2202), 161--174.


\bibitem{Nag}
S. Nag.
{\it The complex analytic theory of Teichm\"uller spaces.}
A Wiley-Interscience Publication. John Wiley \& Sons, Inc., New York 1988.


\bibitem{Pa}
J. Paulhus.
{\it Elliptic factors in Jacobians of low genus curves}. PhD. thesis, University of Illinois at Urbana-Champaign, 2007.

\bibitem{Pr}
R. Preston.
{\it Projective structures and fundamental domains on compact Klein surfaces}, Ph. D. Thesis. University of Texas (1975).

 \bibitem{AR}
 A. Rojas.
 Group actions on Jacobian varieties.
 {\it Rev. Mat. Iber.} {\bf 23} (2007), 397--420.  
 
 \bibitem{Ro}
 J. C. Rohde.
 {\it Cyclic Coverings, Calabi-Yau Manifolds and Complex Multiplication}. 
 Lecture Notes in mathematics {\bf 1975}, Springer-Verlag, Berlin, 2009.
 

\bibitem{Schwarz}
H. A. Schwartz.
\"Uber diejenigen algebraischen Gleichungen zwischen zwei ver\"anderlichen Gr\"o{\ss}en, welche eine schaar rationaler, eindeutig umkehrbarer 
Transformationen in sich selbst zulassen.
{\it Journal f\"ur die reine und angewandte Mathematik} {\bf 87} (1890), 139--145. 


\bibitem{S.}
D. Singerman. 
Finitely maximal Fuchsian groups.
{\it J. London Math. Soc.} {\bf 6}  (1972), 29--38.


\bibitem{T}
T. W. Tucker. 
Finite groups acting on surfaces and the genus of a group.
{\it J. Combin. Theory Ser. B} {\bf 34} (1983), 82--98.


\bibitem{Wil}
H. C. Wilkie.
On non-Euclidean crystallographic groups.
{\it Math. Z.} {\bf 91} (1996),   87--102.
  
    
\bibitem{Wi}
A. Wiman.
\"Uber die hyperelliptischen Curven und diejenigen von Geschlechte $p=3,$ welche eindeutige Transformationen
in sich zulassen.
{\it Bihang till K. Svenska Vet.-Akad. Handlingar, Stockholm} {\bf 21} (1895), 1--28.
  
  
\end{thebibliography}
\end{document}